\newcommand{\RR}{\mathbb R}
\newcommand{\NN}{\mathbb N}
\newcommand{\ZZ}{\mathbb Z}
\renewcommand{\SS}{\mathbb S}
\newcommand{\BB}{\mathbb B}
\renewcommand{\H}{\mathcal H}
\newcommand{\eps}{\varepsilon}
\newcommand{\loc}{\mathrm{loc}}
\DeclareMathOperator{\Per}{Per}
\renewcommand{\vec}[1]{\mathbf{#1}}
\newcommand{\abs}[1]{\left\vert #1 \right\vert}
\newcommand{\Abs}[1]{\left\Vert #1 \right\Vert}
\newcommand{\enclose}[1]{\left(#1\right)}
\newcommand{\Enclose}[1]{\left[#1\right]}
\newcommand{\ENCLOSE}[1]{\left\{#1\right\}}
\newcommand{\defeq}{:=}
\renewcommand{\subset}{\subseteq}
\newtheorem{theorem}{Theorem}[section]
\newtheorem{proposition}[theorem]{Proposition}
\newtheorem{lemma}[theorem]{Lemma}
\newtheorem{corollary}[theorem]{Corollary}
\theoremstyle{definition}
\newtheorem{definition}[theorem]{Definition}
\theoremstyle{remark}
\newtheorem{remark}[theorem]{Remark}
\title{    
Existence of a non-standard isoperimetric triple partition
%\\An isoperimetric partition asymptotic to a singular cone
}
\author[]{Matteo Novaga} 
\address{Universit\`a di Pisa, Dipartimento di Matematica, Largo Bruno Pontecorvo 5,
56127 Pisa, Italy}
\email{\url{matteo.novaga@unipi.it}}
\author[]{Emanuele Paolini} 
\address{Universit\`a di Pisa, Dipartimento di Matematica, Largo Bruno Pontecorvo 5,
56127 Pisa, Italy}
\email{\url{emanuele.paolini@unipi.it}}
\author[]{Vincenzo M. Tortorelli} 
\address{Universit\`a di Pisa, Dipartimento di Matematica, Largo Bruno Pontecorvo 5,
56127 Pisa, Italy}
\email{\url{vincenzo.tortorelli@unipi.it}}
\thanks{
 This research was partially supported by MUR Excellence Department Project 
 awarded to the Department of Mathematics of the University of Pisa.
 The work of M.N. was partially supported by Next Generation EU, PRIN 2022E9CF89;
 the work of E.P. was partially supported by Next Generation EU, PRIN 2022PJ9EFL.
 The first and second authors are members of the INDAM-GNAMPA}
\date\today 
\begin{document}

\begin{abstract}
We show existence of a isoperimetric $3$-partition of $\RR^8$, with one set of finite volume and two of infinite volume, which is asymptotic to a singular minimal cone.
\end{abstract}

\maketitle

\tableofcontents

\section{Introduction}

An \emph{$M$-cluster} in $\RR^d$ is a $M$-uple $\vec E = (E_1,\dots,E_M)$ of pairwise disjoint subsets of $\RR^d$.
We say that $\vec E$ is an \emph{isoperimetric cluster}
if the surface area of its boundary $\H^{d-1}(\partial \vec E) = \H^{d-1}(\partial E_1 \cup \dots \cup \partial E_M)$ is minimal among all $M$-clusters $\vec F$ which have regions with the same volumes: $\abs{F_k}=\abs{E_k}$, $k=1,\dots,M$. 
In the case $M=1$ clusters are single sets and isoperimetric clusters are isoperimetric sets, i.e., balls.

Similarly, a \emph{$N$-partition} is a $N$-uple 
$\vec E=(E_1,\dots, E_N)$ of pairwise disjoint subsets $\RR^d$ whose union is the whole space $\RR^d$.
If all but one of the regions are bounded, the interface $\partial \vec E=\partial E_1\cup\dots\cup\partial E_N$ can have finite 
surface area, whilst if two regions are unbounded the total area must be infinite.
Recently, in \cite{AlaBroVri25}, the natural concept of \emph{isoperimetric partition} is introduced: 
a partition $\vec E$ is \emph{isoperimetric} if it is \emph{locally isoperimetric} which means 
that $\H^{d-1}(\partial \vec E\cap B)\le \H^{d-1}(\partial \vec F\cap B)$ whenever $B$ is bounded 
and $\vec F$ is an $N$-partition such that $\abs{E_k}=\abs{F_k}$ and $E_k\triangle F_k\Subset B$,
for $k=1,\dots, N$.
If a $N$-partition has a single unbounded region, the bounded regions compose an $M$-cluster
with $M=N-1$ and the $N$-partition is isoperimetric if and only if the corresponding $M$-cluster is isoperimetric.
On the other hand, if all the regions of an $2$-partition are unbounded, the isoperimetric constraint on the volumes 
becomes empty (see \cite{BroNov24}) so that isoperimetric $2$-partitions correspond to \emph{locally minimal sets}.

A lens partition $\vec L=(L_1,L_2,L_3)$ in $\RR^d$ is a partition where the bounded region $L_1$ is the intersection of two balls 
with the same radius $R$ and centers $\vec p_1,\vec p_2$ which have distance $R$, and the two unbounded regions $L_2$ and $L_3$ have common
boundary on the hyperplane which has the same distance from the points $\vec p_1, \vec p_2$.
In \cite{AlaBroVri25} it is proven that a $3$-partition of $\RR^2$ with a single bounded region is isoperimetric if and only if it is 
a lens partition. 
The analogous result in $\RR^d$, for $d\le 7$, has been obtained in \cite{NovPaoTor25} (existence) and \cite{BroNov24} (uniqueness), 
where the authors suggest that there might be a counterexample in $\RR^8$, in analogy with what happens for locally minimal sets. In this paper we provide such example.
%Moreover an example of a $4$-partition of $\RR^2$ is introduced an conjectured to be isoperimetric: the \emph{peanut partition}.

If $M\le d+1$, $N=M+1$ there is a \emph{standard} way to partition the sphere $\SS^d$ into $N$ equal regions. 
Take $N$ mutually equidistant points $(\vec p_0, \vec p_1,\dots, \vec p_N) \in \SS^d$, 
and consider the corresponding Voronoi partition of $\SS^d$ i.e., the partition $\vec V$ 
with regions $V_k = \ENCLOSE{\vec x\in \SS^d\colon \abs{\vec x-\vec p_k} \le \abs{\vec x - \vec p_k}, j=1,\dots, N}$.
The stereographic projection $\pi\colon\SS^d \to \RR^d\cup\ENCLOSE{\infty}$ 
sends the Voronoi partition $V$ into a partition of $\RR^d$. 
If we suppose that the {north-pole} $\nu=\pi^{-1}(\infty)\in \SS^d$ is an interior point of the region $V_0$, the 
regions $E_k=\pi(V_k)$ for $k=1,\dots,M=N-1$ are bounded and together compose an $M$-cluster $\vec E$ which is called a \emph{standard cluster}.
On the other hand if the north pole happens to lie on the boundary of two or more regions, the stereographic projection gives 
an $N$-partition of $\RR^d$ which is not corresponding to an $M$-cluster. 
We call these partitions \emph{standard partitions}.

A long standing conjecture by J. Sullivan states that, for $M\le d+1$, isoperimetric $M$-clusters coincide with standard clusters.
The conjecture was confirmed in the case $M=2$, $d=2$ by \cite{Foi93}, in the case $M=2$, $d=3$ 
by \cite{HutMorRitRos02}, in the case $M=2$, $d\ge 4$ by \cite{Rei08}, in the case $M=3$, $d=2$ 
by \cite{Wic04}. 
More recently, in \cite{MilNee23}, the conjecture has been confirmed 
in the cases $M=3$, $d\ge 3$ and $M=4$, $d\ge 4$.
Moreover in \cite{MilXu25} the authors prove that standard $N$-partitions (for all $N\le d+2$)
and in particular standard $M$-clusters (for $M\le d+1$)
are \emph{stable} if $d\ge 3$.
In \cite{PaoTor20,PaoTam16} an example of an isoperimetric cluster,   with $d=2,\, M=4$, necessarily non-standard, is given.
To our knowledge there are no other proven examples of isoperimetric clusters.

In \cite{NovPaoTor25} we prove that any partition which is obtained as the limit of standard isoperimetric clusters is standard and isoperimetric. 
As a consequence, recalling the result in \cite{MilNee22}, we obtain that standard partitions are isoperimetric for all $N\le \min(d+1,5)$.
In the case $N=3$, $d\le 7$ in \cite{BroNov24} the authors prove a uniqueness result which ensures that all isoperimetric partitions are standard.
%We conjecture that, for $N\le d+2$ and $d\le 7$, isoperimetric partitions coincide with standard partitions. 
If $d\ge 8$ there are examples (the first one being the Simons' cone in $d = 8$) of 
locally minimal sets in $\RR^d$ which are singular cones: 
in our terminology these are $2$-partition which are non-standard.
We also point out that in \cite{Bra91} there are examples of $N$-partitions of $\RR^d$ which are isoperimetric but non-standard for $N=2d\ge 8$ and $N=d\ge 6$,
whose boundary is the cone over the $(d-1)$-skeleton of the hypercube.

In the present paper we prove that, also in the case $N=3$, there exists a non-standard isoperimetric partition in dimension $d=8$ (see Theorem~\ref{teomain}),
as was conjectured in \cite{NovPaoTor25} and in \cite{BroNov24} .
Even if the construction is based on the Simons' cone, 
we are not able to show that the non-standard isoperimetric partition that we find is
asymptotic to the Simons's cone. More generally, one could ask whether, given a singular minimal cone in $\RR^d$, there exists 
an isoperimetric $3$-partition asymptotic to that cone.

%\begin{conjecture}[pseudo-lens partition]\label{conj}
%Let $d\ge 4$, $\Pi = \ENCLOSE{\vec x\in \RR^d\colon x_d >0 }$. 
%There exists a rotationally symmetric partition $\vec E = (E_1,E_2,E_3)$ of $\RR^d$ 
%with $E_3\supset \bar\Pi$, $E_2$ unbounded, $E_1$ bounded,
%which is isoperimetric in $\bar \Pi$ 
%in the sense that, for all bounded $B\subset \bar \Pi$ and for all $\vec F=(F_1,F_2,F_3)$ 
%with $\vec E \triangle \vec F \subset B$ and $\abs{F_1}=\abs{E_1}$, one has $\Per(\vec E,B)\le \Per(\vec F,B)$,
%but $\vec E$ is not a lens partition.

%We expect the interface between $E_2$ and $E_3$ to be asymptotic to $\partial \Pi$, 
%and $E_1$ to be a lens-shaped domain whose boundary intersects $\partial \Pi$ on a disk.
%The interface between $E_2$ and $E_3$ would be a minimal surface with a rotational symmetry which 
%is asymptotic to $\partial \Pi$ (see Figure~\ref{fig:pseudo}).
%\end{conjecture}

%\begin{figure}
%\begin{center}
%\includegraphics[width=\textwidth]{pseudo-lens.png}
%\end{center}
%\label{fig:pseudo}
%\caption{The \emph{pseudo-lens} partition described in Conjecture~\ref{conj}}
%\end{figure}

\begin{figure}
\centering\includegraphics[height=5cm]{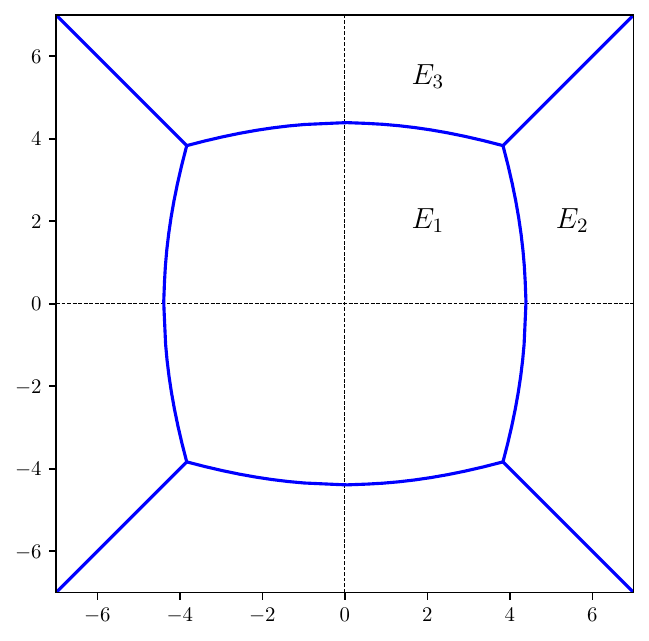}
\label{fig:barrel}
\caption{
Numerically computed picture of the conjectured isoperimetric $3$-partition of $\RR^8$. 
The actual partition in $\RR^8$ would be 
the triple $(r(E_1),r(E_2),r(E_3))$ where $E_1,E_2,E_3\subset \RR^2$ are depicted in the figure,
and $r(E) = \ENCLOSE{(\vec x,\vec y)\in \RR^4\times\RR^4\colon (\abs{\vec x},\abs{\vec y}) \in E}$.
In this picture, where the mean curvature of the bounded region is $1$,
we have $\Per(r(E_1)) \approx 27.91\cdot 10^5$, 
$\abs{r(E_1)} \approx 16.04\cdot 10^5$,
while the perimeter of the missing cone inside $r(E_1)$ is $\approx 9.58\cdot 10^5$ so
that the defect is $\approx 6.82$.
The defect of the \emph{barrel} (i.e.\ the partition with a square in place of $E_1$)
is $\approx 7.10$ and the defect of the lens partition is $\approx 7.29$ (see Lemma~\ref{lm:defect-partition}).
}
\end{figure}

The idea of the proof is the following. 
First, we consider the $3$-partition obtained by adding the cylinder $\BB^4\times \BB^4$ to the Simons' cone, and note that
the perimeter gap required to pass from the Simons' cone to the barrel partition 
(we call it \emph{defect}) is less than 
the perimeter gap required to pass from a hyperplane to a lens partition (see Lemma~\ref{lm:defect-partition}). 
Then we consider the isoperimetric 3-partition which has the boundary data of the Simons' cone on a large sphere, and has a small region  
of prescribed volume. The  defect of such partition is clearly smaller of the defect of the barrel partition.
We then let the radius of the sphere go to infinity and apply concentration compactness to find a possibly infinite set of limit partitions.
Then we notice that, if all these limit partitions were lenses, the total defect, eventually dominated by the defect of the 
isoperimetric partition, would be larger than the defect of a single lens.
This means that at least one of the limit partition should be different from a lens. 
Now we face the major problem in this approach: the fact that limit partitions could \emph{see} the boundary of the large sphere which is going to infinity.
This means that we are forced to study isoperimetric partitions not only in the whole space $\RR^8$, but also in a half-space which is the limit of the spheres.

Of course the barrel partition is not isoperimetric (the triple junctions has an angle of 90 degrees). 
We can numerically find the deformation of the barrel which gives a partition with the same rotational symmetries, the correct angles 
and constant mean curvature (see Figure~\ref{fig:barrel}). 
Proving that this partition is, in fact, isoperimetric, would require some new ideas, because the tools used to prove the minimality of Simons' cone 
are not applicable to isoperimetric partitions.

In the development of our proof we are led to consider isoperimetric partitions contained in a half-space, and
we realized that, when $d\ge 4$, there could exist a $3$-partition in a half-space 
which is not a lens partition.

The plan of the paper is the following: in Section \ref{secnot} we introduce some notation and give the definition of isoperimetric partition.
In Section \ref{seccomp} we recall the concentration-compactness of partitions under natural bounds on the perimeter of the regions (see Theorem \ref{teocon}),
and we prove the closure of isoperimetric partitions (see Theorem \ref{th:teoclosure}).
In Section \ref{secmono} we show the monotonicity formula for isoperimetric partitions in a cone (see Theorem \ref{th:monotonicity-formula}) 
and the existence of blow-down partitions (see Corollary \ref{corinfty}).
Finally, in Section \ref{secexist} we show existence of a non-standard isoperimetric $3$-partition, asymptotic to a singular cone,
which is the main result of this work (see Theorem \ref{teomain}).

\smallskip
After this paper was completed, we were informed by Lia Bronsard that an analogous result has been obtained independently in \cite{BNNS25}.

%%%%%%%%%%%%%%%%%%%%%%%%%%%%%%%%%%%%%%%%%%%%%%%%%%%%%%%%%%%%%%%%%%%%%%%%%%%%%%%
\section{Notation and preliminary definitions}\label{secnot}

If $A,B$ are subsets of $\RR^d$, we denote with $A\triangle B = (A \setminus B) \cup (B\setminus A)$ the symmetric difference between $A$ and $B$.

For $B\subset \RR^d$ we denote with $\bar B$ and $\partial B$ the topological closure and boundary of $B$.
We denote with $\BB^d=\ENCLOSE{\vec x\in\RR^d\colon \abs{\vec x}<1}$ the unit ball of $\RR^d$
and let $\SS^{d-1}=\partial \BB^d$ be its boundary. We let $\omega_d = \abs{\BB^d}$ be the measure of the unit ball. 
We denote with $B_\rho(\vec x) \defeq \vec x + \rho\BB^d$ the ball of radius $\rho>0$ 
centered at $x\in \RR^d$, and let $B_\rho \defeq B_\rho(\vec 0)$.

%For $Y\subseteq \RR^d$, $t\in \RR$, denote by $(Y)_t$ the set:\\
%$\{x \in Y: dist(x,\partial Y)\geq |t|\}=\{x: dist(x,^c Y) \geq |t|\}$ if $t<0$, $\bar Y$ if t=0,\\
%$\{ x: dist(x,Y)< t\}$ if $t>0$.\\
%One has $(Y)_{-|t|}= ^c\![(^c Y)_{|t|}]$, and $dist(x,Y)\! >\! 0\Rightarrow  dist(x,Y)\! =\! dist(x,\partial Y)$. 

A set $C\subset \RR^d$ is said to be a \emph{cone} if for every $t>0$ and $\vec x \in C$ we have $t\vec x\in C$
(a positively homogeneous cone with vertex in the origin).
A \emph{hyperplane} of $\RR^d$ is a $(d-1)$-dimensional vector subspace of $\RR^d$ (passing through the origin).
We denote with $(\vec x,\vec y)$ the scalar product of two vectors $\vec x,\vec y \in \RR^d$.
An \emph{half-space} of $\RR^d$ is any set of the form $\ENCLOSE{\vec x\in \RR^d\colon (\vec x,\nu)> 0}$ for some $\nu\in \RR^d$.
Hyperplanes and half-spaces are cones.
The translation of a hyperplane (resp. half-space) will be called affine hyperplane (resp. half-space).

$\abs{E}$ will denote the Lebesgue measure of a measurable set $E\subset \RR^d$.
Given $E, E_n$ measurable subsets of $\RR^d$ we will write $E_n\stackrel{L^1}\longrightarrow E$ if 
$\abs{E_n \triangle E}\to 0$ as $n\to +\infty$ and $E_n\stackrel{L^1_\loc}\longrightarrow E$ 
if $\abs{(E_n \triangle E)\cap B_r}\to 0$ for all $r>0$.
We also consider on $\RR^d$ the usual Hausdorff, $k$-dimensional, $k\le d$, measures $\H^h$.
If $E\subset \RR^d$ is a measurable set, and $B\subset \RR^d$ is a Borel set, we define the \emph{perimeter} of $E$
in $B$ as:
\[
   \Per(E,B) \defeq \Abs{D 1_E}(B), 
\]
where $\Abs{D 1_E}$ is the total variation of the distributional derivative of the characteristic function of $E$.
We let $\Per(E) \defeq \Per(E, \RR^d)$ be the \emph{perimeter} of $E$.
We say that a measurable set $E\subset \RR^d$ has \emph{finite perimeter} if $\Per(E)<+\infty$,
we say that $E$ is a \emph{Caccioppoli set} if $\Per(E,B_r)<+\infty$ for all $r>0$.
In this case $\Per(E,\cdot)$ is a measure which coincides with $\H^{d-1}\llcorner \partial^* E$ where $\partial^*E$
is the \emph{reduced boundary} of $E$, i.e.\ the set of points of $\partial E$ where an approximate normal vector can be defined.

$H^{(t)}$ denotes the sets of points of $H$ of density $t$ in $H$; when $H$ is a Caccioppoli set,  $\nu_H$ denotes the measure theoretic exterior normal, $\partial^e H$ the essential boundary of $H$, so that for any   $F\in (C^1_c(\RR^d))^d$, we have
\begin{equation*}
\partial^* H\subseteq H^{(\frac12)}\subseteq \partial^e H,\quad
\H^{d-1}(\partial^e H\setminus 
 \partial^*H )=0, \quad 
\int_H \text{\rm div} F \, d\H^d = \int_{\partial^e H}
\langle F\cdot \nu_H\rangle \, d\H^{d-1}.
\end{equation*}

%Let $X_n,X\subset \RR^d$ be closed sets. 
%We will say that $X_n\to X$ \emph{locally Hausdorff} if for every ** \sout{open} ** bounded set $B\subset \RR^d$ 
%\begin{equation}\label{eq:locally-Hausdorff}
% \begin{split}
% &    \lim_{n\to+\infty}\max \ENCLOSE{
%  \sup_{x\in X\cap B} \,{\inf_{y\in X_n} \abs{x-y}},
%  \sup_{y\in X_n\cap B} \,{\inf_{x\in X} \abs{x-y}}} = 0, \\
%  i.e. & \text{ for all $\tau$, finally}\quad X\cap  B\subset (X_n)_\tau ,\, X_n\cap  B\subset (X)_\tau 
%  \end{split} 
%\end{equation}

%\section{Isoperimetric partitions}

\begin{definition}
For $N\in \NN$ let $E_1,\dots, E_N$ be measurable subsets of $\RR^d$. 
The $N$-uple $\vec E = (E_1, \dots, E_N)$ is said to be an $N$-\emph{partition} of $\RR^d$ if 
$\abs{E_j\cap E_k}=0$ when $j\neq k$ and 
\[
\abs{\RR^d \setminus \bigcup_{j=1}^N E_j} =0.
\]
If each $E_j$ is a Caccioppoli set (has locally finite perimeter) we say that $\vec E$
is a \emph{Caccioppoli partition}.

We say that the $N$-partition $\vec E$ is a \emph{proper} partition if $\abs{E_j}>0$ for all $j=1,\dots,N$, otherwise we might specify that it is  \emph{improper}.

We define 
\[
  \vec m(\vec E) \defeq (\abs{E_1}, \dots, \abs{E_N}) \in [0,+\infty]^d
\]
and 
\[
  \Per(\vec E, B) \defeq \frac 1 2 \sum_{i=1}^N \Per(E_k, B) \in [0,+\infty].
\]

We say that a sequence of $N$-partitions $\vec E_n$ converges to an $N$-partition $\vec F$ in $L^1_\loc$ if 
for all measurable bounded $B\subset \RR^d$ for all $j=1,\dots,N$ we have
\[
  \abs{(E^n_j\triangle F_j)\cap B} \to 0.
\]

Operators on $N$-uples of sets are intended componentwise, as one would understand with usual vectors. 
For example,
if $\vec E$ and $\vec F$ are $N$-partitions in $\RR^d$, $A\subset \RR^d$ and $\vec x \in \RR^d$ we 
adopt the following notation
\begin{align*}
  \vec E \cup A &\defeq (E_1 \cup A, \dots E_N \cup A),
  \\
  \vec E \cup \vec F &\defeq (E_1 \cup F_1, \dots, E_N \cup F_N),
  \\
  \vec E + \vec x &\defeq (E_1+\vec x, \dots, E_n+\vec x),
\end{align*}
where 
\[
  E_k + \vec x = \ENCLOSE{\vec y +\vec x \colon \vec y \in E_k}.
\]
Notice that $\vec E\cup\vec F$ and $\vec E \cup A$ in general are not partitions.
%but something like $(\vec E \cap A) \cup (\vec F \setminus A)$ is.
\end{definition}

\begin{definition}[isoperimetric partitions]
Let $X\subset \RR^d$ be a {closed} set and $\vec E$ be an $N$-partition \emph{of} $\RR^d$, $J\subset \ENCLOSE{1,\dots,N}$.
We say that $\vec E$ is \emph{$J$-isoperimetric in $X$} (or simply \emph{$J$-isoperimetric} if $X=\RR^d$)
if for every compact set $K\subset X$  and every partition $\vec F$ of $\RR^d$ such that 
\begin{align}
\label{eq:varloc}
    \vec F \triangle \vec E &\subset K \\
    \label{eq:iso-measure}
    \abs{F_j\cap K} &= \abs{E_j \cap K} \qquad \text{for all $j\in J$}
\end{align}
one has 
\[
  \Per(\vec E, K) \le \Per(\vec F, K).
\]

We say that the partition $\vec E$ is \emph{isoperimetric in $X$} (or simply \emph{isoperimetric} if $X=\RR^d$)
if $\vec E$ is $J$-isoperimetric in $X$ for $J=\ENCLOSE{j\in\ENCLOSE{1,\dots,N}\colon \abs{E_j}<+\infty}$.
\end{definition}

%\begin{remark}
%Notice  that, when $U$ is open, $\Per(\vec E,U)$ only depends on $\vec E\cap U$, but, in general,  also 
%takes into account the perimeter on $\partial U\cap U$.
%This means that if $X$ were open no boundary condition  will arise on $\partial X$ (so that we are just dealing with partitions of the open $X$, see Remark 3.6).   While being  $X\not= \RR^d$
%closed there is a prescription of 
%the boundary datum of $\vec E$ on $\partial X$.
%\end{remark}

\begin{remark} If $J = \ENCLOSE{j\colon \abs{E_j}<+\infty}$ and ~ \eqref{eq:varloc} holds then 
\eqref{eq:iso-measure} is equivalent to
\begin{align}
 \label{eq:fullmeas}   \abs{F_j} = \abs{E_j} \qquad \text{for all $j=1,\dots,N$}.
 \end{align}
\end{remark}

\begin{remark}
In the case when $X=\RR^d$ and the partition has a single infinite region, by dropping the infinite region we obtain what is usually called a \emph{cluster} and 
isoperimetric partitions correspond to \emph{isoperimetric clusters} (see \cite[Definition 2.3 and Proposition 2.11]{NovPaoTor25}).

%In the case when $X$ is open and $J=\emptyset$, an isoperimetric partition is what is usually called a \emph{locally minimal partition of $X$}.
When $X=\RR^d$, the notion of $J$-isoperimetric partition given above closely resembles that of \emph{locally minimizing $(N,M)-$clusters} given in \cite[Definition 2.5]{BroNov24}. 
%Moreover, the above definiton of J-isoperimetric partition in $X$, closely resembles that of \emph{locally $J$-isoperimetric partition of an open subset $X$} in \cite[Definitions 2.2, 2.12]{NovPaoTor25}: there only compact variations in the \emph{open} $X$ are allowed. 
%So we drop in the above definition the word \emph{locally} because it is misleading  when $\abs{X}<+\infty$ and because competitor can touch the boundary of $X$. 
\end{remark}

\section{Compactness of partitions}\label{seccomp}

We extend to partitions of $\RR^d$ the results proven in \cite[Theorems 3.3, 4.2, Proposition 5.2]{NoPaStTo22} for clusters (i.e.\ partitions with only one region with infinite measure) in a more abstract setting.  
In this section we only prove the statements concerning partitions, 
while some known results are collected in Appendix~\ref{secapp} for the reader's convenience.

\begin{theorem}\label{teocon}
Let $\vec E_n$ be a sequence of $N$-partitions of $\RR^d$,
and let 
\[
 m_j \defeq \limsup_n \abs{E^n_j} \in [0,+\infty], \qquad j=1,\dots,N.
\]
Assume that
\begin{gather*}
\sup_{x\in \RR^d} \sup_{n\in \NN} \Per(\vec E_n,B_1(x)) < +\infty,\\
m_j<+\infty \implies \sup_{n\in \NN} \Per(E^n_j) < +\infty.
\end{gather*}
Then there exists a subsequence of $\vec E_n$, non-relabeled, 
an $N$-partition $\vec F$ of $(\RR^d)^\NN$ 
(i.e.\ $\vec F = (\vec F^i)_{i\in \NN}$, $\vec F^i$ is an $N$-partition of $\RR^d$)
and $\vec x_n^i \in \RR^d$
such that 
\begin{gather}
    \label{eq:cc-infty2}
    \lim_{n\to+\infty} \abs{\vec x_n^i-\vec x_n^j} \to \infty \qquad \text{if $i\ne j$}, 
    \\
    \label{eq:cc-converge2}
    \vec E_n-\vec x_n^i \to \vec F^i \qquad \text{in $L^1_\loc(\RR^d)$ for each $i\in \NN$},
    \\
    \label{eq:cc-measure2}    
    \sum_{i\in \NN} \abs{F^i_j} = \lim_n \abs{E^n_j}, 
    \quad \text{if $m_j<+\infty$}.
    %\\
    % \label{eq:cc-measure22}
   % \sum_{j: m_j=+\infty} \sum_{i\in \NN} \abs{F^i_j} =+\infty.
\end{gather}
\end{theorem}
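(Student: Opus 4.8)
The plan is to run a concentration–compactness argument à la Lions, adapted to the partition setting. First I would reduce to the single-set situation that is already understood for Caccioppoli sets. The uniform bound $\sup_{x,n}\Per(\vec E_n,B_1(x))<\infty$ together with the decomposition $\Per(\vec E_n,B)=\frac12\sum_j\Per(E^n_j,B)$ gives a uniform local perimeter bound on each coordinate set $E^n_j$; for indices $j$ with $m_j<\infty$ we in addition have the \emph{global} bound $\sup_n\Per(E^n_j)<\infty$, which by the standard compactness theorem for sets of finite perimeter produces an $L^1$ (not merely $L^1_\loc$) convergent subsequence. For the remaining indices $j$ with $m_j=\infty$ one only gets $L^1_\loc$ compactness after translation. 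The heart of the matter is therefore to locate the ``lumps of mass'' that escape to infinity.

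The key step is an iterative selection of centers. Fix a finite index $j_0$ (there is at least one, otherwise all $m_j=+\infty$ and the statement is vacuous in the infinite part; if \emph{every} $m_j=+\infty$ one replaces the role of mass by a perimeter concentration function). Define the concentration function
\[
  Q_n(R) \defeq \sup_{x\in\RR^d}\ \sum_{j\,:\,m_j<\infty}\abs{E^n_j\cap B_R(x)},
\]
which is nondecreasing in $R$, bounded by $\sum_{m_j<\infty}\abs{E^n_j}$, and, after passing to a subsequence, converges pointwise to a nondecreasing limit $Q(R)$ with $Q(R)\uparrow L\defeq\sum_{m_j<\infty}m_j$ as $R\to\infty$. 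If $L=0$ there is nothing to extract. Otherwise, for each $\eps>0$ choose $R_\eps$ with $Q(R_\eps)>L-\eps$ and pick centers $\vec x_n^{1}$ realizing (up to $o(1)$) the sup in $Q_n(R_\eps)$; by the uniform perimeter bound and $L^1_\loc$ compactness, $\vec E_n-\vec x_n^{1}\to\vec F^{1}$ along a subsequence, with $\sum_{m_j<\infty}\abs{F^1_j}\ge L-\eps$ for every $\eps$, hence (letting $\eps\to0$ along a diagonal subsequence) $\sum_{m_j<\infty}\abs{F^1_j}\ge\liminf$ of the total escaping mass captured near $\vec x^1_n$. Now subtract: replace $\vec E_n$ by the configuration with the mass inside $B_{R}(\vec x^1_n)$ ``removed'' in the bookkeeping sense, observe that the residual partitions still satisfy the uniform perimeter hypotheses, and repeat. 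This generates centers $\vec x_n^i$ with $\abs{\vec x_n^i-\vec x_n^j}\to\infty$ for $i\ne j$ (by construction, since at stage $i$ we work in the complement of the already-extracted balls), limit partitions $\vec F^i$, and, via a diagonal argument over $i$, the inequality $\sum_{i}\abs{F^i_j}\le\lim_n\abs{E^n_j}$ for each finite $j$. The reverse inequality — i.e.\ that \emph{no} mass is lost, giving \eqref{eq:cc-measure2} with equality — follows because at each stage the concentration function has captured all but $\eps$ of the mass: the ``dichotomy without vanishing'' alternative is excluded precisely by the definition of $Q_n$, so the tail $\sum_{i>I}\abs{F^i_j}$ plus the mass still diffusing is controlled by $L-Q(R_{\eps_I})\to0$.

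The main obstacle I expect is \eqref{eq:cc-measure2}, the exact mass conservation: one must rule out that positive mass either \emph{vanishes} (spreads out with vanishing density, which is where the \emph{global} perimeter bound $\sup_n\Per(E^n_j)<\infty$ for finite $j$ is essential — a set with bounded perimeter and bounded measure cannot have its mass dissolve, by the isoperimetric inequality applied on annuli) or \emph{splits into infinitely many pieces} whose individual masses go to zero but whose sum is positive. The first is handled by the remark above; the second requires the diagonal/summability argument to be organized so that at each extraction one captures a \emph{definite} fraction of the remaining mass (choose $\vec x^i_n$ to realize the sup of the concentration function of the residual sequence for a radius large enough that $Q$ is within $2^{-i}$ of its limit), which forces the process to exhaust the mass in the limit rather than leak it into ever-smaller fragments. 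The convergence $\vec E_n-\vec x_n^i\to\vec F^i$ in $L^1_\loc$ for \emph{all} $N$ coordinates — including the infinite ones — then comes for free from the uniform local perimeter bound and a final diagonalization over $i$ and over an exhausting sequence of balls, since for those coordinates we never claimed mass control, only $L^1_\loc$ precompactness.
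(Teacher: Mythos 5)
Your overall strategy is viable, but it is a genuinely different route from the paper's. The paper first reduces to the single set $E^n=\bigcup_{j\in J}E^n_j$ (with $J$ the set of finite-mass indices) and invokes a pre-established concentration-compactness theorem for sets (Theorem~\ref{th:concentration}), whose proof proceeds not via the Lions concentration function but via a lattice decomposition: the unit cubes of $\ZZ^d$ are enumerated so that the masses $\abs{E^n\cap Q^j_n}$ decrease, the relative isoperimetric inequality on cubes yields equisummability of these masses uniformly in $n$ (Lemma~\ref{lm:equisummability}), and the centers arise by grouping cubes whose mutual distances remain bounded. The per-index identity \eqref{eq:cc-measure2} is then obtained by a short bookkeeping step: lower semicontinuity of the measure under $L^1_\loc$ convergence gives $\sum_i\abs{F^i_j}\le\liminf_n\abs{E^n_j}$ for every $j$, and summing over $j\in J$ and comparing with the total mass identity for $E^n$ forces equality term by term. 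Your iterative extraction with the concentration function $Q_n(R)$ reaches the same conclusion --- and the control $L-Q(R)\to 0$ does bound the escaping mass of each $E^n_j$ individually, since all terms in the sum defining $Q_n$ are nonnegative --- at the price of the usual care needed in the ``subtract and repeat'' step and the double diagonalization in $\eps$ and $i$. The paper's version buys a cleaner modular structure, isolating the delicate extraction in an appendix result applied as a black box; yours avoids introducing the lattice machinery but must re-derive the non-vanishing estimate from scratch.

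One claim in your opening paragraph is wrong and in fact contradicts the rest of your own argument: a uniform bound on $\Per(E^n_j)$ together with $m_j<+\infty$ does \emph{not} produce an $L^1(\RR^d)$-convergent subsequence --- take $E^n_j=B_1(n\,e_1)$, which has bounded perimeter and measure but converges to $\emptyset$ in $L^1_\loc$. Without equi-tightness one only gets $L^1_\loc$ precompactness, and the possible escape of mass to infinity is precisely the phenomenon that the concentration-compactness scheme you then set up is designed to capture. Strike that sentence; nothing else in the proposal depends on it.
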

\begin{proof} 
Up to subsequences we assume that for  each $j\in\{ 1, \dots, N\}$ there exists $\lim_n \abs{E^n_j} =m_j$.
Let $J\defeq \ENCLOSE{j=1,\dots,N\colon m_j<+\infty}$
and consider the sequence of sets 
\[
  E^n \defeq \bigcup_{j\in J} E^n_j.
\]
Applying Theorem~\ref{th:concentration} to the sequence $E^n$, up to a subsequence, 
we can find $\vec x_n^i$ and $F^i$ 
satisfying~\eqref{eq:cc-infty}, \eqref{eq:cc-converge} and~\eqref{eq:cc-sum} {\emph{i.e.}}
\begin{equation}\label{eq:123451} \sum_i \abs{F^i} = \lim_n \sum_{j\in J} \abs{E^n_j}.
\end{equation}

In view of Lemma~\ref{lm:L1-loc-compactness}, up to a subsequence, we can assume that 
there exist $F^i_j$ such that for all $j=1,\dots, N$ we have $E^n_j-\vec x_n^i \to F^i_j$
in $L^1_\loc$. 
For each $i$ the $N$-uple $\vec F^i \defeq (F^i_1, \dots, F^i_N)$ is a partition of $\RR^d$. 
In particular, up to a negligible set, we have $F^i = \bigcup_{j\in J} F^i_j$ so obtaining from  \eqref{eq:123451} 

\begin{equation}
\label{eq:67891} \sum_i \sum_{j\in J}\abs{F_j^i} = \lim_n \sum_{j\in J} \abs{E^n_j}=\lim_n \abs{E^n}.
\end{equation}

Consider any $R>0$ 
and let $n$ be large enough so that $B_R(\vec x_n^j)$ are pairwise disjoint.
By the semicontinuity of Lebesgue measure applied to $E^n_j-\vec x_n^i\to F^i_j$, 
for all $j\in\{1, \dots ,N\}$, we have
\[
    \sum_{i=0}^M \abs{F_j^i \cap B_R}\\
    = \lim_n \sum_{i=0}^M \abs{E_j^n\cap B_R(\vec x_n^i)}
    = \lim_n \abs{E_j^n \cap \bigcup_{i=0}^M B_R(\vec x_n^i)}
    \le \liminf_n \abs{E_j^n}
\]
hence, for $R\to \infty$ and then $M\to \infty$: 
\begin{equation}
\label{eq:39052}
    \sum_i \abs{F_j^i} 
    \le \lim_M \sup_R \sum_{i=0}^M \abs{F_j^i \cap B_R}
    \le \liminf_n \abs{E^n_j}
    \le m_j,\quad \text{for all } j.
\end{equation}
Finally using, for $j\in J$,~\eqref{eq:67891},  and~\eqref{eq:39052},
\begin{align*}
\lim_n \sum_{j\in J} \abs{E^n_j}
&= \lim_n \abs{E^n}
= \sum_i \abs{F^i}
= \sum_{j\in J}\sum_i \abs{F^i_j}\\
&\le \sum_{j\in J} \liminf_n \abs{E^n_j}
\le \liminf_n \sum_{j\in J} \abs{E^n_j}.
\end{align*}
Since the previous inequality is actually an equality, also in~\eqref{eq:39052} we must 
have an equality for all $j \in J$, and we obtain~ \eqref{eq:cc-measure2}.
%To get \eqref{eq:cc-measure22}, simply observe that for $R$ big enough
%\[
%\sum_{j\not\in J} \sum _i \abs{F^i_j}\geq \sum _i \sum_{j\not\in J} \abs{F^i_j\cap B_R}=\sum_i %\abs{B_R\setminus F^i}\geq \sum_i \abs{B_R} -\abs{F^i}=+\infty.
%\]
\end{proof}

\begin{remark} 
%With the same argument used to get \eqref{eq:39052}, 
By semicontinuity of the perimeter,   
one also gets for all open sets $B$
\begin{equation}
\label{eq:semi1}
  \sum_{i \in \NN}\Per(F^i_j, B) \le \liminf_n \Per( E^j_n, \bigcup_i (B+\vec x^i_n)), 
  \quad \text{for all } j,
\end{equation}
\end{remark}

%\begin{remark}[semicontinuity 2] Each limit $\vec F^i$  is a Caccioppoli  partition of $\RR^d$, so
%for all $i$ there exists $j$ such that $\abs{ F_j^i} =+\infty$. 
%Moreover if $\abs{F^i_h\cap F^i_j}=0$, and both $F^i_h, F^i_j$ have infinite measure, then both have infinite perimeter.
%Hence, by semicontinuity, 
%$\lim_n \Per (E^j_n)=\lim_n \Per (E^h_n)= +\infty$.
%[*** EP: non capisco cosa si vuole dire ***]
%\end{remark}

\begin{lemma}\label{lm:glueing}
  Let $\vec E$ and $\vec F_i$ be Caccioppoli $N$-partitions in $\RR^d$ for $i=1,\dots,n$.
  Let $R>0$ and $x_i\in \RR^d$, for $i=1,\dots, N$, be such that 
  the balls $B_R(x_i)$ are pairwise disjoint. Let $B$ any other ball or $\RR^d$.
  For all $\rho\le R$ define 
  \[
     \vec G_\rho = 
     (\vec E\setminus \bigcup_{i=1}^n B_\rho(x_i)\cap B)
     \cup 
     \bigcup_{i=1}^n (\vec F^i\cap B_\rho(x_i)\cap B).
  \]
  
  Then the set of $\rho \in (r,R)$ such that 
  \begin{equation}\label{eq:glueing}
    \sum_{i=1}^n \Per(\vec G_\rho(x_i), (\partial B_\rho)\cap B) 
    \le \sum_{i=1}^n \frac{\Abs{\vec m((\vec E\triangle \vec F_i) \cap (B_R(x_i)\setminus B_r(x_i))\cap B)}_1}{2\cdot (R-r)}
  \end{equation}
  has positive measure 
  (where $\Abs{v}_1 = \sum_{k=1}^d \abs{v_k}$ is the $\ell_1$-norm of the vector $v\in \RR^d$).
\end{lemma}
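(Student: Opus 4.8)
The statement is a standard ``slicing + averaging'' (coarea) argument, of the type used to construct competitors for perimeter-minimizing partitions. The idea is that gluing $\vec E$ and $\vec F_i$ along a sphere $\partial B_\rho(x_i)$ produces a perimeter contribution on that sphere equal to the $\H^{d-2}$-measure of the jump set of the gluing, and by Fubini this jump can be controlled on average over $\rho\in(r,R)$ by the amount of symmetric difference in the annulus. The subtlety is purely in the bookkeeping over the $N$ regions and the $n$ balls.

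First I would reduce to a single ball, say $x_1=0$, since the balls $B_R(x_i)$ are pairwise disjoint and $B$ is fixed: the constructions in the different balls do not interact, so the desired inequality for the sum follows by summing the single-ball inequalities (and if the $\rho$-set has positive measure for each $i$, it does for a common choice, after re-reading the statement as ``there is a common $\rho$'' — in fact it suffices to average all $n$ contributions simultaneously against the same variable $\rho$). Fix then the function
\[
  g(\rho) \defeq \Per\bigl(\vec G_\rho, (\partial B_\rho(x_i))\cap B\bigr),
\]
and observe that, up to an $\H^{d-1}$-null set, $\partial^* G_{\rho,k}\cap\partial B_\rho(x_i)$ consists exactly of the points where $1_{E_k}$ and $1_{F^i_k}$ differ on that sphere; hence
\[
  g(\rho) = \tfrac12\sum_{k=1}^N \H^{d-2}\Bigl(\bigl(\partial B_\rho(x_i)\bigr)\cap B \cap \{1_{E_k}\ne 1_{F^i_k}\}\Bigr).
\]
This is the one genuinely geometric point: the extra perimeter created by the gluing lives on the sphere and equals the slice of the jump set; I would justify it via the standard structure theorem for the reduced boundary of a set obtained by patching two Caccioppoli sets across a smooth hypersurface (for a.e.\ $\rho$ the trace of $1_{E_k}-1_{F^i_k}$ is defined and the new reduced boundary on the sphere is the set where this trace is nonzero).

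Next I would apply the coarea formula in the form $\int_r^R \H^{d-2}\bigl((\partial B_\rho(x_i))\cap A\bigr)\,d\rho = \H^{d-1}\bigl((B_R(x_i)\setminus B_r(x_i))\cap A\bigr)$ to each set $A = B\cap\{1_{E_k}\ne 1_{F^i_k}\}$, obtaining
\[
  \int_r^R g(\rho)\,d\rho
  = \tfrac12\sum_{k=1}^N \H^{d-1}\Bigl((B_R(x_i)\setminus B_r(x_i))\cap B\cap\{1_{E_k}\ne 1_{F^i_k}\}\Bigr)
  = \tfrac12\,\Abs{\vec m\bigl((\vec E\triangle\vec F_i)\cap(B_R(x_i)\setminus B_r(x_i))\cap B\bigr)}_1,
\]
since $\{1_{E_k}\ne 1_{F^i_k}\} = E_k\triangle F^i_k$ and the $\ell_1$-norm of the measure vector sums the Lebesgue measures of these symmetric differences. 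Summing over $i=1,\dots,n$ and dividing by $R-r$, the mean value over $(r,R)$ of $\sum_i g(\rho)$ equals the right-hand side of~\eqref{eq:glueing} divided by... more precisely equals exactly that right-hand side; hence the set of $\rho\in(r,R)$ on which $\sum_i g(\rho)$ is below (or equal to) its average has positive Lebesgue measure, which is the claim.

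The main obstacle is not any hard estimate but the measure-theoretic care in step identifying $g(\rho)$ with the sliced jump set: one must check that for a.e.\ $\rho$ the sphere $\partial B_\rho(x_i)$ is ``good'' for all $N$ sets simultaneously (traces exist, $\H^{d-1}$ of the intersection of $\partial^*E_k$ with the sphere vanishes), and that patching does not create extra reduced boundary elsewhere on $\partial B_\rho$; both are standard but need the coarea formula applied to $\Per(E_k,\cdot)$ and to $\H^{d-1}\llcorner\partial^*E_k$. A minor point is the bound $\rho\le R$ versus $\rho\in(r,R)$ in the statement (the quantity $r$ appears on the right but the range of $\rho$ should be $(r,R)$), which the averaging argument handles transparently. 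Everything else is the componentwise/summation bookkeeping already set up in the paper's notation.
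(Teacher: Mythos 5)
Your proposal is correct and follows essentially the same route as the paper's proof, which simply cites \cite[Lemma 2.6]{NovPaoTor25} for the slicing identity that you re-derive via traces and coarea before averaging over $\rho\in(r,R)$. The only slip is a consistent off-by-one in your Hausdorff dimensions: the jump set created on $\partial B_\rho(x_i)$ is a subset of the sphere of positive $\H^{d-1}$ measure (not $\H^{d-2}$), and the coarea integral of these $(d-1)$-dimensional slices over $\rho$ yields the Lebesgue ($\H^d$) measure of $(E_k\triangle F^i_k)$ in the annulus, which is exactly what your final expression $\frac12\Abs{\vec m(\cdot)}_1$ records.
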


\begin{proof}
For all $0<r<R$, for all $i=1,\dots, n$, for all $k=1,\dots, n$ one has 
(see \cite[Lemma 2.6]{NovPaoTor25}):
  \begin{align*}
  \lefteqn{\int_r^R \Per(G_\rho^k, (\partial B_\rho(x_i))\cap B)\, d\rho}\\
%  & = \int_r^R \Enclose{\Per(G_\rho^k,B_R(x_i)) - %P(E^k,B_\rho(x_i)) - P(F_i^k,B_R(x_i)\setminus B_\rho(x_i))}\, %d\rho \\
  & = \abs{(E^k \triangle F_i^k)\cap (B_R(x_i)\setminus B_r(x_i))\cap B}.
  \end{align*}
  Summing up on $k=1,\dots,n$, on $i=1,\dots, N$, averaging on $\rho\in (r,R)$, 
  and dividing by two, we obtain
  \begin{equation*}
  \fint_r^R \sum_{i=1}^n \Per(\vec G_\rho,(\partial B_\rho(x_i))\cap B)\, d\rho = \sum_{i=1}^n \frac{\Abs{\vec m((\vec E\triangle \vec F_i) \cap (B_R(x_i)\setminus B_r(x_i))\cap B)}_1}{2\cdot (R-r)}.
  \end{equation*}
  The conclusion follows by noting that the integrand on the left-hand side cannot be 
  larger than its average.
\end{proof}

We recall the following result from \cite[Theorem~2.8]{NovPaoTor25}.

\begin{theorem}
\label{th:volume_fixing_variations_alternative}  
  Let $\Omega$ be an open subset of $\RR^d$.
  Let $\vec F = (F_1,\dots, F_N)$ be a (possibly improper) $N$-partition 
  of $\Omega$.
  For all $k=1,\dots,N$ with
  $\abs{F_k}>0$, let $x_k\in \Omega$, and $\rho_k>0$, be given so that
  the balls $B_{\rho_k}(x_k)$ are contained in $\Omega$, 
  are pairwise disjoint
  and $\abs{F_k\cap B_{r}(x_k)}>\frac 1 2 \omega_d r^d$
  for all $r\le \rho_k$.
 
  Let $A=\bigcup_k B_{\rho_k}(x_k)$.
  Then for every $\vec a = (a_0,\dots, a_N)$
  with $a_k \ge -\abs{F_k\cap B_{\rho_k}(x_k)}$ when $\abs{F_k}>0$,
  $a_k\ge 0$ when $\abs{F_k}=0$, 
  and $\sum_{k=1}^N a_k = 0$,
  %and $a_i\ge 0$ when $\abs{E_i}=0$
  there exists a partition $\vec F' = (F'_1, \dots, F'_N)$
  such that for all $k=1,\dots, N$:
  \begin{enumerate}
    \item[(i)] $F_k' \triangle F_k \subset A$,
    \item[(ii)] $\abs{F'_k \cap A} = \abs{F_k \cap A} + a_k$,
    \item[(iii)] $P(F'_k, \bar A) \le P(F_k,\bar A) + C_1 \cdot 
      \displaystyle\sum_{j=1}^N \abs{a_j}^{1-\frac 1 d}$,
  \end{enumerate}
  with $C_1=C_1(d,N)$ not depending on $\vec F$.  
\end{theorem}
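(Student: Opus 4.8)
The plan is to prove Theorem~\ref{th:volume_fixing_variations_alternative} by reducing it to the well-known one-ball volume-fixing variation and then handling the several disjoint balls independently. First I would treat each ball $B_{\rho_k}(x_k)$ separately: since $\abs{F_k\cap B_r(x_k)}>\frac12\omega_d r^d$ for all $r\le\rho_k$, the region $F_k$ occupies more than half of every concentric ball, which is exactly the density hypothesis needed to construct a competitor that absorbs (or releases) a prescribed amount of volume $a_k$ inside that ball while paying a perimeter price controlled by $\abs{a_k}^{1-1/d}$. Concretely, one enlarges or shrinks $F_k$ inside $B_{\rho_k}(x_k)$ by a radial rearrangement: replace $F_k\cap B_{\rho_k}(x_k)$ by $F_k\cap B_{\rho_k}(x_k)$ together with a thin annular shell taken from the union of the other regions (when $a_k>0$), or remove a shell from $F_k$ and distribute it to the other regions (when $a_k<0$). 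The lower bound $a_k\ge-\abs{F_k\cap B_{\rho_k}(x_k)}$ guarantees that when $a_k<0$ there is enough mass of $F_k$ available to remove.

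Next I would make the volume bookkeeping consistent across all $k$. The assignment of the $a_k$ is not independent ball-by-ball in general, because shrinking $F_k$ in its own ball must give its volume to some other region $F_j$; the constraint $\sum_k a_k=0$ is what makes this globally solvable. The clean way is: for each $k$ with $a_k>0$, import the needed volume into $B_{\rho_k}(x_k)$ from whatever region currently fills the outer annulus of that ball (which exists because $F_k$ is not all of $B_{\rho_k}(x_k)$ unless $\abs{B_{\rho_k}(x_k)\setminus F_k}=0$, a case one disposes of by shrinking $\rho_k$ slightly); for each $k$ with $a_k<0$, export the excess from $B_{\rho_k}(x_k)$ into the complement. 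Since the balls are pairwise disjoint and all contained in $\Omega$, these modifications do not interfere, and after performing all of them the net change in each region's global volume is precisely $\sum_{k: \text{region } j \text{ touched}} (\text{contribution})$, which by the constraint $\sum a_k=0$ can be arranged so that $\abs{F'_j\cap A}=\abs{F_j\cap A}+a_j$ holds; property (i), $F'_k\triangle F_k\subset A$, is automatic since nothing outside $A$ is changed.

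For property (iii), the perimeter estimate, I would use the standard isoperimetric-type bound for a single ball: modifying a set of finite perimeter inside $B_\rho$ by adding or deleting a volume $v$ costs at most $C\,v^{1-1/d}$ in perimeter (this is the content of, e.g., the volume-fixing variation lemmas in Maggi's book or in the references the paper already cites; indeed Theorem~\ref{th:volume_fixing_variations_alternative} is quoted verbatim from \cite[Theorem~2.8]{NovPaoTor25}, so the argument there is available). Summing over the $N$ balls and using that each region is touched in a bounded number of balls gives $P(F'_k,\bar A)\le P(F_k,\bar A)+C_1\sum_{j=1}^N\abs{a_j}^{1-1/d}$ with $C_1$ depending only on $d$ and $N$; the independence of $C_1$ from $\vec F$ is exactly because the one-ball estimate's constant depends only on $d$, and the combinatorics of which region goes where depends only on $N$.

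The main obstacle I expect is not any single estimate but the bookkeeping of \emph{which} region donates volume to \emph{which}: one must choose a consistent scheme so that, after all $N$ local surgeries, every region's total volume changes by exactly the prescribed $a_k$ and no region is accidentally over-depleted. The density hypothesis $\abs{F_k\cap B_r(x_k)}>\frac12\omega_d r^d$ is the crucial hook — it both ensures $F_k$ has enough mass to export and, via a standard density/measure argument, that the ``donor'' region in each ball can be identified and has positive measure in the relevant annulus (or one shrinks $\rho_k$). Once that scheme is fixed, the perimeter bound is a routine sum of single-ball estimates. I would therefore structure the proof as: (1) reduce to a single ball and recall the one-ball volume-fixing lemma; (2) set up the global donation scheme respecting $\sum a_k=0$ and the sign constraints on $a_k$; (3) verify (i)–(ii) by construction and (iii) by summation.
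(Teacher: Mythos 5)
This statement is imported verbatim from \cite[Theorem~2.8]{NovPaoTor25}; the present paper gives no proof of it, so your sketch can only be measured against the standard argument there. Your overall strategy (concentric-ball surgery inside each $B_{\rho_k}(x_k)$, enabled by the density hypothesis, plus a global volume bookkeeping using $\sum_k a_k=0$) is the right one, and your use of the density bound is essentially correct: if one chooses $r$ with $\abs{F_j\cap B_r(x_j)}=v$, the hypothesis $\abs{F_j\cap B_r(x_j)}>\tfrac12\omega_d r^d$ forces $r<(2v/\omega_d)^{1/d}$, so the sphere $\partial B_r(x_j)$ costs at most $C(d)\,v^{1-1/d}$ of perimeter. (Note that the generic claim ``modifying a set by volume $v$ inside a ball costs at most $Cv^{1-1/d}$'' is false as stated; what is true is that this \emph{particular} modification has that cost, precisely because of the density hypothesis.)

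The genuine gap is the transfer scheme itself, which you correctly identify as the main obstacle but then do not resolve — and the scheme you actually describe would fail. Importing volume into $B_{\rho_k}(x_k)$ ``from whatever region currently fills the outer annulus of that ball'' changes the volume of that donor region by an amount $-a_k$ that has nothing to do with its own prescribed increment $a_j$, and the hypothesis $a_j\ge-\abs{F_j\cap B_{\rho_j}(x_j)}$ only guarantees that region $j$ can afford to lose mass \emph{from its own ball}, not from an arbitrary annulus of someone else's ball; so property (ii) would fail for the donors. The fix is to organize the surgery as a transportation plan: write the vector $\vec a$ as a sum of elementary transfers $a_{jk}\ge 0$ from donors ($a_j<0$, with $\sum_k a_{jk}=\abs{a_j}\le\abs{F_j\cap B_{\rho_j}(x_j)}$) to recipients ($a_k>0$, with $\sum_j a_{jk}=a_k$), which is solvable exactly because $\sum_k a_k=0$. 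Each transfer is realized by \emph{relabeling}: choose nested radii in $B_{\rho_j}(x_j)$ splitting $F_j\cap B_r(x_j)$ into pieces of volumes $a_{jk}$ and reassign each piece to region $k$. Every donor then loses exactly $\abs{a_j}$ from its own ball, every recipient gains exactly $a_k$, at most $N$ spheres of radius at most $(2\abs{a_j}/\omega_d)^{1/d}$ are introduced per donor, giving (iii) with $C_1=C_1(d,N)$. This scheme also handles the case you omit, namely $\abs{F_k}=0$ with $a_k>0$: such a region has no ball of its own, and the only way to satisfy (ii) is for it to receive a relabeled piece carved out of some donor's ball.
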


\begin{definition}
\label{def:millefoglie}
Let $N\in \NN$, $J\subset \ENCLOSE{1,\dots,N}$ and for all $i \in \NN$ let $\bar \Pi^i$ be closed subsets of $\RR^d$.
We say that the $N$-partition $\vec F = (\vec F^i)_{i\in \NN}$ of $(\RR^d)^\NN$ is a  $J$-isoperimetric $N$-partition in 
$X = \bigotimes_{i\in \NN}\bar \Pi^i\subset (\RR^d)^\NN$ if 
for all compact $K\subset \RR^d$ and for all $N$-partition $\vec G = (\vec G^i)_{i\in \NN}$
of $(\RR^d)^\NN$
such that 
\begin{gather}
\label{eq:clo-finite}
\ENCLOSE{i\in \NN\colon \vec F^i \neq \vec G^i} \text{ is finite},\\
\label{eq:clo-difference}
\vec F^i \triangle \vec G^i \subset K\cap \bar \Pi^i \qquad \text{for all $i\in\NN$}, \\
\label{eq:clo-measure}
\sum_i \abs{G_j^i \cap K\cap \bar \Pi^i} = \sum_i \abs{ F_j^i \cap K\cap \bar \Pi^i} \qquad \text{for all $j\in J$,}
\end{gather}
one has
\begin{equation}
\label{eq:clo-thesis}
\sum_i \Per(\vec F^i, K\cap \bar \Pi^i) \le \sum_i \Per(\vec G^i, K\cap \bar \Pi^i). 
\end{equation}
\end{definition}

The following result extends \cite[Theorem 2.13]{NovPaoTor25} to the case of conical 
boundary data given on closed balls invading the space.

\begin{theorem}[closure of isoperimetric partitions with boundary data]\label{th:teoclosure}
Let $\vec S$ be a Caccioppoli $N$-partition of $\RR^d$ 
which is a cone, i.e.\ $\vec S = \RR_+ \cdot \vec S_0$ with $\vec S_0=\vec S \cap \partial B_1$.
Suppose that
\begin{equation}\label{eq:stima_theta}
\theta \defeq \sup_{\vec x\in \RR^d} \sup_{\rho>0} \frac{\Per(\vec S, B_\rho(\vec x))}{\rho^{d-1}} < +\infty.
\end{equation}

Let $\vec E_n=((E_n)_1,\dots, (E_n)_N)$ be a sequence of $N$-partitions of $\RR^d$ which are $J$-isoperimetric in $\bar B_{R_n} \subset \RR^d$
for some $J\subset \ENCLOSE{1,\dots,N}$ and $R_n\to+\infty$, such that $\vec E_n\setminus \bar B_{R_n} = \vec S \setminus \bar B_{R_n}$.

For $i\in \NN$, suppose $\vec F^i$ are $N$-partitions of $\RR^d$, and $\vec x_n^i\in \RR^d$,
 satisfy
\begin{gather}
    \label{eq:c-infty2}
    \lim_{n\to+\infty} \abs{\vec x_n^i-\vec x_n^j} \to \infty \qquad \text{if $i\ne j$}, 
    \\
    \label{eq:c-converge2}
    \vec E_n-\vec x_n^i \to \vec F^i \qquad \text{in $L^1_\loc(\RR^d)$ for each $i\in \NN$}.
\end{gather}

Moreover assume that, for all $i\in \NN$, we have, as $n\to +\infty$:
\begin{equation}
\label{EP-eq:clo-balls}
B_{R_n} - \vec x^i_n \stackrel{L^1_\loc}\longrightarrow \Pi^i
\end{equation}
where $\Pi^i$ is either the whole space $\RR^d$ or it is an open affine half-space of $\RR^d$, and
\begin{equation}
\label{EP-eq:clo-cloni}
\vec S - \vec x^i_n \stackrel{L^1_\loc}\longrightarrow \vec T^i,
\end{equation} 
\begin{equation}
\label{eq:hausloc}
    S_k-\vec x^i_n\text{ \rm locally Hausdorff converge to } T^i_k,
\end{equation}
where $\vec T^i$ is a $N$-partition of $\RR^d$.
Moreover suppose that for all $i\in \NN$ and all open bounded $B\subset \RR^d$
\begin{gather}
    \Per(\vec T^i,\partial B)=0 \implies 
    \lim_{n\to +\infty} \Per(\vec S-\vec x_n^i, B) = \Per(\vec T^i,B) \\
    \label{eq:3093576}
    \Per(\vec T^i, B) = \H^{d-1}(B \cap \partial\vec T^i).
\end{gather}

Then $\vec F = (\vec F^i)_{i\in \NN}$ is a  $J$-isoperimetric $N$-partition in  $X = \bigotimes_{i\in \NN}\bar\Pi^i\subset (\RR^d)^\NN$
(see Definition~\ref{def:millefoglie}). 
In particular each $\vec F^i$ is a $J$-isoperimetric partition in $\bar \Pi^i$ for all $i\in \NN$.
\end{theorem}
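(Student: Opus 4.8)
The plan is to prove the theorem by a contradiction-localization argument, in the style of the corresponding closure result for clusters in \cite{NovPaoTor25}, adapted to the multi-sheeted setting of Definition~\ref{def:millefoglie} and to the half-space limits $\bar\Pi^i$. Suppose $\vec F=(\vec F^i)_{i\in\NN}$ is \emph{not} $J$-isoperimetric in $X$. Then there exist a compact $K\subset\RR^d$, a finite set $I_0\subset\NN$, and competitor partitions $\vec G^i$ (with $\vec G^i=\vec F^i$ for $i\notin I_0$) satisfying \eqref{eq:clo-finite}--\eqref{eq:clo-measure} but violating \eqref{eq:clo-thesis}, i.e.\
\[
\sum_{i\in I_0}\Per(\vec G^i,K\cap\bar\Pi^i) \le \sum_{i\in I_0}\Per(\vec F^i,K\cap\bar\Pi^i) - \delta
\]
for some $\delta>0$. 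Enlarging $K$ to a ball $\bar B_r$ we may assume $K=\bar B_r$ and that $\Per(\vec F^i,\partial B_r\cap\bar\Pi^i)=0$ and $\Per(\vec T^i,\partial B_r)=0$ for all $i\in I_0$ (these are true for a.e.\ $r$). The strategy is to transplant the $\vec G^i$ back into $\vec E_n$ near the points $\vec x_n^i$, producing for large $n$ a competitor to the $J$-isoperimetricity of $\vec E_n$ in $\bar B_{R_n}$ with strictly smaller perimeter, which is the contradiction.

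The transplantation proceeds in three steps. First, \textbf{volume correction}: the $L^1_\loc$ convergences \eqref{eq:c-converge2} give $\abs{(E_n)_j\cap B_r(\vec x_n^i)} \to \abs{F_j^i\cap B_r}$, and by \eqref{eq:clo-measure} the total target volume $\sum_{i\in I_0}\abs{G_j^i\cap B_r\cap\bar\Pi^i}$ matches $\sum_{i\in I_0}\abs{F_j^i\cap B_r\cap\bar\Pi^i}$ for $j\in J$; so the volume discrepancies $a_j^{(n)} := \sum_i(\abs{G_j^i\cap B_r\cap\bar\Pi^i}-\abs{(E_n)_j\cap B_r(\vec x_n^i)})$ tend to $0$. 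We absorb these using Theorem~\ref{th:volume_fixing_variations_alternative}: choose auxiliary balls (for each $j\in J$ with positive volume, sitting in a region of $\vec E_n$ of high density, far from all the $\vec x_n^i$ and from $\partial B_{R_n}$) and adjust volumes at a perimeter cost $C_1\sum_j\abs{a_j^{(n)}}^{1-1/d}\to 0$. Second, \textbf{gluing on annuli}: apply Lemma~\ref{lm:glueing} with the partitions $\vec E_n$ and the (volume-corrected, translated) $\vec G^i$ on annuli $B_{r'}(\vec x_n^i)\setminus B_r(\vec x_n^i)$ to splice the competitor in while controlling the extra interface on the gluing spheres; since $\vec E_n - \vec x_n^i \to \vec F^i$ and $\vec G^i$ differs from $\vec F^i$ only inside $B_r$, the right-hand side of \eqref{eq:glueing} tends to $0$, so we can pick a good radius $\rho_n\in(r,r')$ at which the gluing perimeter is $o(1)$. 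Third, \textbf{perimeter accounting inside $B_r(\vec x_n^i)$}: on $B_r(\vec x_n^i)$ the spliced partition has perimeter $\sum_i\Per(\vec G^i, B_r\cap\bar\Pi^i) + o(1)$ (using that the transplanted $\vec G^i$ lives on $\bar\Pi^i$, which is where $B_{R_n}-\vec x_n^i$ converges by \eqref{EP-eq:clo-balls}, so the part of $\vec G^i$ outside $\bar\Pi^i$ contributes nothing after transplantation — this is exactly where the half-space limit enters). Meanwhile the original $\vec E_n$ has perimeter $\sum_i\Per(\vec F^i,B_r\cap\bar\Pi^i) + o(1)$ there, by the semicontinuity bound \eqref{eq:semi1} turned into an equality via hypotheses \eqref{eq:3093576} and the Hausdorff-convergence assumptions \eqref{eq:hausloc} on $\vec S$, together with $\vec E_n=\vec S$ outside $\bar B_{R_n}$. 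Combining the three contributions, for $n$ large the competitor beats $\vec E_n$ by roughly $\delta/2$, contradicting that $\vec E_n$ is $J$-isoperimetric in $\bar B_{R_n}$.

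I expect the main obstacle to be the perimeter accounting in Step three, specifically showing that no perimeter is ``lost at infinity'' in the passage $\vec E_n - \vec x_n^i \to \vec F^i$ and, dually, that the translated competitor does not gain spurious perimeter from the interaction of $\vec G^i$ with the boundary $\partial\Pi^i$ of the limiting half-space. The delicate point is that $B_{R_n}$ is only converging to $\Pi^i$ in $L^1_\loc$, so $\partial B_{R_n}-\vec x_n^i$ may fail to converge in Hausdorff distance to $\partial\Pi^i$ near $K$; one has to use that $\vec E_n$ coincides with the cone $\vec S$ precisely outside $\bar B_{R_n}$, and invoke \eqref{EP-eq:clo-cloni}--\eqref{eq:3093576} to pin down the boundary behavior and rule out concentration of $\H^{d-1}$ on $\partial\Pi^i$. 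Once the $i\in I_0$ sheets are handled, the final remark — that each $\vec F^i$ is $J$-isoperimetric in $\bar\Pi^i$ — follows immediately by taking $I_0=\{i\}$ in Definition~\ref{def:millefoglie}.
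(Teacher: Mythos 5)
Your overall architecture (transplant the competitors $\vec G^i$ into $\vec E_n$ near the points $\vec x_n^i$, control the splicing interface with Lemma~\ref{lm:glueing}, restore the volume constraint with Theorem~\ref{th:volume_fixing_variations_alternative}, then invoke the minimality of $\vec E_n$) is exactly the paper's strategy, phrased contrapositively. But the step you yourself flag as ``the main obstacle'' is not a technicality you can defer: it is the crux of the theorem, and your proposal does not contain the idea needed to resolve it. The problem is not, as you suggest, that $\partial B_{R_n}(-\vec x_n^i)$ might fail to converge in Hausdorff distance to $\partial \Pi^i$ (it does converge locally uniformly, since a sphere of radius $R_n\to\infty$ at bounded distance from the origin flattens out). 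The problem is that the spliced partition must equal $\vec S$ outside $\bar B_{R_n}$, so inside $B_r(\vec x_n^i)$ you are forced to cut $\vec G^i$ along the curved hypersurface $\partial B_{R_n}(-\vec x_n^i)$ and match it there with $\vec S-\vec x_n^i$. Since $\vec G^i$ is an arbitrary Caccioppoli partition of $\Pi^i$, its trace on a hypersurface lying just inside $\Pi^i$ is completely uncontrolled, and the mismatch creates a new interface on $\partial B_{R_n}\cap B_r(\vec x_n^i)$ whose $\H^{d-1}$-measure can be of order $r^{d-1}$, not $o(1)$. Your assertion that ``the part of $\vec G^i$ outside $\bar\Pi^i$ contributes nothing after transplantation'' does not address this, because the cut happens at $\partial B_{R_n}$, which for every finite $n$ is strictly inside $\Pi^i$ on part of $B_r$.

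The paper's resolution is a translation device that your plan lacks: the competitor is inserted as $\vec G^i-\alpha\nu^i$, pushed a distance $\alpha$ into the half-space, so that for $n$ large the entire collar between $\partial B_{R_n}(-\vec x_n^i)$ and $\bar\Pi^i-\alpha\nu^i$ is occupied not by the unknown $\vec G^i$ but by its boundary datum, which outside $\bar\Pi^i$ equals the cylinder $\vec T^i$ (invariant under translation along $\nu^i$). The mismatch on $\partial B_{R_n}\cap B_r$ is then between $\vec T^i$ and $\vec S-\vec x_n^i$, and this is where hypotheses \eqref{eq:hausloc} and \eqref{eq:3093576} are actually consumed: the symmetric difference is confined to a $\delta$-neighbourhood of $\partial\vec T^i$ (estimate \eqref{eq:clo-delta}), whose trace on the sphere is bounded via the Lipschitz projection \eqref{eq:448909} and the density bound \eqref{eq:01455}, giving the $O(\delta)$ control of \eqref{eq:43444}. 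Without this (or an equivalent mechanism) your Step three fails. A secondary point: you propose to perform the volume correction in regions of $\vec E_n$ ``far from all the $\vec x_n^i$'', but for $j\in J$ the mass of $(E_n)_j$ may concentrate entirely at those points, so uniform-in-$n$ density balls need not exist there; the paper instead fixes the volumes inside the $\vec G^i$ themselves, on balls $B_{r_i}(\vec y_k^i)\Subset \Pi^i\cap B_{R-1}$ chosen once and for all.
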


\begin{proof}
Note that if $\abs{\vec x_n^i}-R_n\to +\infty$ 
we would have $\Pi^i=\emptyset$, and this case has been excluded in the statement because it is not relevant.
If $\abs{\vec x_n^i}-R_n\to -\infty$ we have $\Pi^i=\RR^d$:
in this case the construction below is easier.
So, our focus is in the case when $\Pi^i$ is a half-space of $\RR^d$.

In the case when $\Pi^i$ is a half-space with outer normal $\nu_i$,
by \eqref{eq:c-converge2} and~\eqref{EP-eq:clo-balls} 
passing to the limit in $\vec E_n\setminus \bar B_{R_n}=\vec S\setminus \bar B_{R_n}$ we obtain that $\vec F^i \setminus \bar \Pi^i = \vec T^i \setminus \bar \Pi^i$.
Since $B_{R_n}(-\vec x_n^i)\to \Pi^i$ in $L^1_\loc$, one can check that
\[
    \nu^i = \lim_n \frac{\vec x_n^i}{\abs{\vec x_n^i}}.
\]
On the other hand, given $\vec x\in \RR^d$ one has  $\frac{ \vec x+\vec x^i_n}{\abs{ \vec x+\vec x^i_n}}\to \nu^i$ locally uniformly
as $n\to +\infty$, so that 
since $\vec S - \vec x_n^i$ converges locally to $\vec T^i$, being $\vec S$ a cone implies that $\vec T^i$ 
is equivalent to a cylinder perpendicular to $\partial \Pi^i$:
\[
    \partial \vec T^i = \partial \vec T^i_0 + \RR \nu^i    
\]
where $\vec T_0$ is the partition on $\partial \Pi^i$ which is the trace of $\vec T$. 
In particular, $\Per(\vec T^i, \partial \Pi^i+t\nu^i)=0$ for all $t\in \RR$.
From~\eqref{eq:stima_theta} we obtain
\[
  \Per (\vec T^i,B_\rho(\vec x)) \le \theta \rho^{d-1}, \qquad \forall \vec x\in \RR^d, \rho>0
\]
whence, using \eqref{eq:3093576}, considering $\partial \vec T_0$ a partition in dimension $d-1$,
\begin{equation}
    \label{eq:01455}
    \H^{d-2}(\partial \vec T_0 \cap B_\rho(\vec x)) \le \theta \rho^{d-2}, \qquad \forall \vec x \in \partial \Pi^i, \forall \rho>0.
\end{equation}

Given $\vec x\in B_R \cap \partial B_{R_n}(-\vec x_n^i)$, 
the normal vector to $B_{R_n}(-\vec x_n^i)$ in $\vec x$ is $\frac {\vec x+\vec x_n^i}{\abs{\vec x+\vec x_n^i}}$ 
which is uniformly converging, in $B_R$, to $\nu^i$ as $n\to +\infty$.
So, for $n$ large enough, we can assume that the angle between the normal to $\partial B_{R_n}$ and $\nu_i$ is small enough 
so that the orthogonal projection $\pi \colon \partial B_{R_n}(-\vec x_n^i) \to \partial \Pi^i$, if restricted 
to $B_R$, is injective and its inverse function is $2$-Lipschitz.
This means that for all measurable sets $X$ which are cylinders in direction $\nu^i$, one has
\begin{equation}\label{eq:448909}
    \H^{d-1}(X \cap B_R \cap \partial B_{R_n}(-\vec x_n^i))
    \le 2\H^{d-1}(X \cap B_R \cap \partial \Pi^i).
\end{equation}

Let $K\subset \RR^d$ be a compact set, 
let $\vec G = (\vec G^i)_{i\in \NN}$ be a partition satisfying~\eqref{eq:clo-finite}, \eqref{eq:clo-difference}, \eqref{eq:clo-measure}, 
and let $\eps>0$ be small enough so that if $C_1=C_1(d,N)$ is the constant given by Theorem~\ref{th:volume_fixing_variations_alternative}, we have
\begin{equation}
\label{EP-eq:assumption-epsilon}
    \eps < 1, \qquad \eps < (N C_1)^2.
\end{equation}

\begin{figure}
\begin{center}
\includegraphics[width=\textwidth]{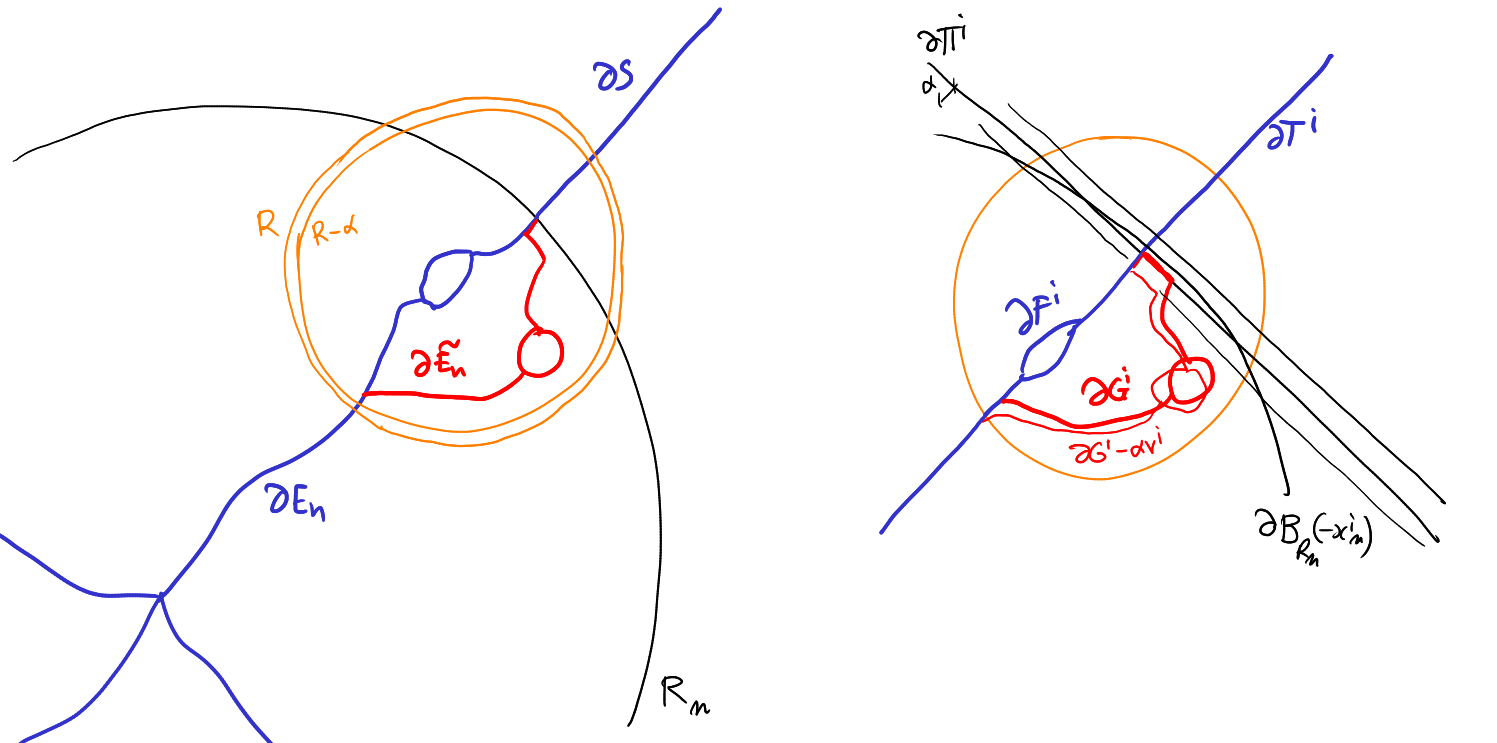}
\end{center}
\caption{A reference picture for the proof of Theorem~\ref{th:teoclosure}.}
\end{figure}

%\emph{Plan.}
Our aim is to prove that the inequality~\eqref{eq:clo-thesis} is valid 
up to an error of order $\eps$.
This will be obtained by building a cluster $\hat {\vec E}_n$ competing to the minimality of $\vec E_n$ for some large $n$.
First a cluster $\tilde {\vec E}_n$ is obtained by patching all the given clusters $\vec G^i$ into large holes $\Omega_n^i = B_{R-\rho}(\vec x_n^i)\setminus B_{R_n}$ cut from $\vec E_n$.
The radius $R>0$ is large enough to contain all the variations $\vec G^i\triangle \vec F^i$ while $n$ will be taken large enough 
so that the balls $B_R(\vec x_n^i)$ will be disjoint. 
However these balls in general can contain large portions of the boundary $\partial B_{R_n}$ where we are required to keep 
the original boundary data of our sequence $\vec E_n$.
To facilitate the matching of the boundary, the cluster ${\vec G}^i$ will be slightly translated inside the container $\Pi^i$
so that the unknown behaviour of $\vec G^i$ on the boundary is replaced by $\vec T^i$ which can be assumed to be controlled.
The parameter $\alpha$ will measure the entity of the translation. 
By taking $n$ large enough we will ensure that 
$\alpha$ is larger than the distance, in $B_R$, between the translation of the curved boundary $\partial B_{R_n}(-\vec x_n^i)$
and its limit $\partial \Pi^i$.
The cutting radius $R$ must be slightly adjusted by a small quantity $\rho$ to assure we don't have perimeter concentrating 
on $\partial B_R$. 
A small amount of the mass of the regions with prescribed measure of $\vec E_n$ can remain outside all the balls $B_{R_n}$, 
we are hence forced to slightly adjust the measure of the regions of $\vec G^i$, using the volume fixing Lemma, 
to recover the constraint and obtain a competitor 
$\hat{\vec E}_n$ to $\vec E_n$. 
In preparation to this we have to fix the small balls $B_{r^i}(\vec y_k^i)$ inside the regions of $\bar G^i$
where the mass will be exchanged as needed, and we must take care of ensuring that these balls remain in the patching area.
The parameters $\eps, R, \alpha, n, \rho$ will be chosen in this order. 
All their properties will be declared at once when they are introduced, 
to make it clear that there is no circular dependency among them.

Let $I=\ENCLOSE{i\in \NN\colon \vec F^i\neq \vec G^i}$, $I$ is a finite set in view of~\eqref{eq:clo-finite}.
Let $R>0$ be large enough so that $K\subset B_{R-1}$, and $\Per (\vec G^i, \partial B_R))$.

In view of applying Theorem~\ref{th:volume_fixing_variations_alternative} to $G^i$,
for every $i\in I$ consider points $\vec y_1^i, \dots, \vec y_{N}^i$ and a radius $r_i>0$
such that $B_{r_i}(\vec y_k^i)\Subset \Pi^i\cap B_{R-1}$, and,
\[
    \abs{G^i_k\cap \Pi^i \cap B_{R-1}} > 0 
    \implies 
    \forall \rho<r_i\colon \abs{G^i_k\cap B_\rho(\vec y_k^i)} > \frac 1 2 \omega_d \rho^d.
\]
This can be done by taking $\vec y^i$ a point of density $1$ in $G^i_k\cap \Pi^i\cap B_{R-1}$ and $r_i$ small enough.

For all $\Pi^i$ which are half-spaces, let $\nu^i$ be the normal exterior versor, and set $\nu^i\defeq 0$ when $\Pi^i = \RR^d$.
Let $\alpha\in(0,\frac 1 2)$ be sufficiently small so that, being $C_1=C_1(d, N)$ as in Theorem  ~\ref{th:volume_fixing_variations_alternative}, we have
\begin{gather}
%\nonumber
%  4\alpha\omega_{d-2} R^{d-2} \le \eps, \\
%  \Per(\vec F^i, B_R\setminus  B_ {R-\alpha}) =\Per(\vec G^i, B_R\setminus  B_ {R-\alpha}) \le \eps,\\
    \label{EP-eq:fik33}
    \sup_{\rho<\alpha} \Abs{\vec m(\vec G^i \triangle (\vec G^i -\rho \nu^i))}_1 \le \frac 1 2 \enclose{\frac{\eps}{N^2C_1}}^{\frac d {d-1}} \le \eps, \\
    \label{EP-eq:fik34}
    \sup_{\rho<\alpha} \Abs{\vec m(\vec F^i \triangle (\vec F^i -\rho \nu^i))}_1 \le \eps, \\
    \label{eq:fik39}
    \Per(\vec G^i,B_{R+\alpha}\cap (\Pi^i+2\alpha\nu^i)) - \Per(\vec G^i, B_{R-\alpha}\cap(\Pi^i + 2\alpha\nu^i) \le \eps,\\
    \label{eq:fik35}
    B_{r_i}(\vec y_k^i)\Subset \Pi^i-\alpha \nu^i,\\
    \label{EP-eq:fik36}
    \Per(\vec T^i,B_R\cap (\bar \Pi^i+2\alpha\nu^i)\setminus(\bar \Pi^i-2\alpha\nu^i)) \le 4 \alpha \theta R^{d-2} \le \eps,\\
    \label{eq:fik38}
    \Per(\vec T^i,B_R\setminus (\Pi^i+2\alpha\nu^i)) \le \Per(\vec T^i,B_{R-\alpha}\setminus (\Pi^i+2\alpha\nu^i)) + \eps,\\
    \label{eq:449367}
    \Per(\vec E_n, B_R(\vec x_n^i)) \le \Per(\vec E_n, B_{R-\alpha}(\vec x_n^i))+\eps.
\end{gather}

Take $n$ large enough so that, 
%\eqref{EP-eq:25494}, 
using \eqref{eq:c-infty2}, \eqref{eq:c-converge2},
and~\eqref{EP-eq:clo-balls},
%and~\eqref{eq:cont}
we have
\begin{gather}
    R_n > R,\quad  \abs{\vec x_{n'}^i-\vec x_{n'}^\ell}> 3 R \qquad \text{if $i,\ell\in I$, $i\neq \ell$, $n'\ge n$},\\
    \label{EP-eq:fj32}
    \Abs{\vec m(((\vec E_n-\vec x_n^i)\triangle \vec F^i) \cap B_R)} < \frac 1 2 \enclose{\frac{\eps}{N^2 C_1}}^{\frac d {d-1}} \qquad \forall i\in I,\\
    \label{EP-grossi}
    (\bar \Pi^i -\alpha \nu^i) \cap B_{R} \subset B_{R_n}(-\vec x_n^i) \cap B_{R}, \\
    \label{EP-grossi2}
    \bar B_{R_n}(-\vec x_n^i) 
    \subset(\Pi^i +\alpha \nu^i),\\
    \label{EP-semicon}
       \Per(\vec F^i, B_{R}) \le \Per(\vec E_n, B_{R}(\vec x^i_n)) + \eps,\\
    \label{EP-cont2}
        \Per(\vec S-\vec x_n^i,B_R\setminus (\bar \Pi^i-\alpha\nu^i)) \le \Per(\vec T^i,B_R\setminus (\bar \Pi^i-\alpha\nu^i)) + \eps,\\
       %\abs{\Per(\vec S-\vec x_n^i, B_R)  - \Per(\vec T^i, B_R)} < \eps,\\
    \label{eq:clo-delta}
    B_R \cap (S_k - \vec x_n^i)\subset (T^i_k)_\delta, \qquad \text{with $\delta = {\frac{\eps}{4\theta R^{d-2}}}$.}
%        \forall \vec x \in B_R\cap (S_k - \vec x_n^i)\,
%        \exists \vec y \in T^i_k
%        \colon \abs{\vec x - \vec y} < .
\end{gather}

For some fixed $\rho\in (0, \alpha)$ consider 
$\Omega_n^i\defeq B_{R-\rho}(\vec x_n^i) \cap B_{R_n}$,
define $\Omega_n \defeq \bigcup_i \Omega_n^i$, and 
\begin{equation}
    \tilde {\vec E}_n 
        \defeq \Enclose{\vec E_n \setminus \Omega_n} 
        \cup 
        \bigcup_{i\in I} \Enclose{(\vec G^i - \alpha \nu^i + \vec x_n^i)\cap \Omega_n^i}.
\end{equation}
Clearly $\tilde{\vec E}_n \triangle \vec E_n \subset \bar \Omega_n \subset \bar B_{R_n}$
moreover by slightly adjusting $\rho=\rho_n\in(0,\alpha)$,
in view of Lemma~\ref{lm:glueing}, we can suppose that 
\begin{equation}
\label{EP-bordomega} 
\begin{aligned} 
\MoveEqLeft{\Per (\tilde{\vec E}_n, \partial B_{R-\rho}(\vec x_n^i)\cap B_{R_n}) }\\
 &\le \frac 1 2 \sum_{i\in I}\Abs{\vec m((\vec E_n-\vec x_n^i) \triangle (\vec G^i-\alpha \nu^i)\cap B_R\setminus B_{R-1})}_1\\
 & = \sum_{i\in I}\Abs{\vec m((\vec E_n-\vec x_n^i) \triangle (\vec F^i-\alpha \nu^i)\cap B_R\setminus B_{R-1})}_1\\
 & \le \sum_{i\in I}\Abs{\vec m((\vec E_n-\vec x_n^i) \triangle (\vec F^i-\alpha \nu^i)\cap B_R)}_1\\
 & \le \sum_{i\in I}\enclose{\Abs{\vec m((\vec E_n-\vec x_n^i) \triangle \vec F^i\cap B_R)}_1
  + \Abs{\vec m(\vec F^i \triangle (\vec F^i-\alpha \nu^i)\cap B_R)}_1}\\
 & \le 2\eps     \qquad\text{[by \eqref{EP-eq:fj32} and \eqref{EP-eq:fik34}]}.\\
\end{aligned}
\end{equation}

For each $i\in I$, let us now consider the partition $\hat {\vec G}^i$ obtained applying Theorem~\ref{th:volume_fixing_variations_alternative}
to $\vec G^i$ on the balls $B_{\rho}(\vec y_k^i)$ so that, by~\eqref{eq:fik35},
\begin{gather}
\label{eq:44678}
  \hat {\vec G}^i \triangle \vec G^i \Subset B_{R-1} \cap \Pi^i 
    \subset B_{R-\alpha}\cap \Pi^i \subset (\Omega_n^i -\vec x_n^i)+\alpha\nu^i, \\
 \abs {(\hat G^i_j - \alpha \nu^i + \vec x_n^i) \cap \Omega_n^i} = \abs{(E_n)_j \cap \Omega_n} \qquad \forall i\in I\quad \forall j\in J.
\end{gather}
This is achieved by means of variations which changes the volume of each region $G^i_j$, with $j\in J$, 
by a (possibly negative) amount $a^i_j$ with, 
using~\eqref{EP-eq:fik33} and~\eqref{EP-eq:fj32},
\begin{align*}
\abs{a^i_j} 
    &\le \abs{(G^i_j-\alpha \nu^i)\triangle G^i_j\cap B_R} 
    + \abs{G^i_j\triangle F^i_j\cap B_R}
    + \abs{F^i_j\triangle((E_n)_j - \vec x_n^i)\cap B_R}\\
    &\le 
    %\alpha \omega_{d-1}R^{d-1} 
    \frac 1 2 \enclose{\frac{\eps}{N^2 C_1}}^{\frac d {d-1}}
    + 0 
    + \frac 1 2 \enclose{\frac{\eps}{N^2 C_1}}^{\frac d {d-1}}
    = \enclose{\frac{\eps}{N^2 C_1}}^{\frac d {d-1}}
\end{align*}
for $j\in J$, and $a^i_k=0$ for $k\not \in J$
(see also~\cite[Theorem 2.13]{NovPaoTor25} where this step is explained in detail).
Using also \eqref{EP-eq:fik33}, we have
\begin{equation}\label{EP-eq:4824}
      \Per(\hat {\vec G}^i, B_{R-\rho}) - \Per(\vec G^i, B_{R-\rho})
        \le  NC_1 \sum_{j=1}^N \abs{a^i_j}^{\frac {d-1}{d}} \le \eps.
\end{equation}
Define
\begin{equation*}
    \hat {\vec E}_n 
        \defeq \Enclose{\vec E_n \setminus \Omega_n} 
        \cup 
        \bigcup_{i\in I} \Enclose{(\hat{\vec G}^i - \alpha \nu^i + \vec x_n^i)\cap \Omega_n^i}
\end{equation*}
so that 
$\hat {\vec E}_n$ is a competitor to the constrained minimality of $\vec E_n$ in $\bar B_{R_n}$
\[
  \Per(\vec E_n, \bar B_{R_n}) \le \Per(\hat{\vec E}_n, \bar B_{R_n})
\]
and since $\hat {\vec E}_n \triangle \vec E_n\subset \bar B_{R_n} \cap \bigcup_{i\in I} \bar B_{R-\rho}(\vec x_n^i)$
we have
\begin{equation}
\label{EP-eq:4u8367}
  \sum_{i\in I} \Per(\vec E_n, \bar B_{R-\rho}(\vec x_n^i))
  \le \sum_{i\in I} \Per(\hat{\vec E}_n, \bar B_{R-\rho}(\vec x_n^i)).
\end{equation}
Let 
\[
\vec H^i \defeq (\vec S-\vec x_n^i)\setminus B_{R_n} \cup (\vec T^i \cap B_{R_n}).
\]
We need to estimate  $\Per(\vec H^i,B_R(\vec x_n^i)\cap \partial B_{R_n})$.
We assume $\abs{\vec x_n^i}\to +\infty$ otherwise there is nothing to prove.

By~\eqref{eq:clo-delta}, 
if we take 
$\delta = \frac{\eps}{4\theta R^{d-2}}$
we have
\[
    (\vec T^i\triangle (\vec S-\vec x_n^i))\cap B_R \subset (\partial \vec T^i)_\delta
\]
and hence
\begin{equation}
\label{eq:43444}
\begin{aligned}
    \MoveEqLeft{\Per(\vec H^i,B_R\cap \partial B_{R_n}(-\vec x_n^i))}\\
    &= \frac 1 2 \sum_{k=1}^N \H^{d-1}(((S_k-\vec x_n^i)\triangle T^i_k)\cap B_R \cap \partial B_{R_n}(-\vec x_n^i))\\
    &\le \H^{d-1}((\partial \vec T^i)_\delta \cap B_R \cap \partial B_{R_n}(-\vec x_n^i))\\
    &\le 2\H^{d-1}((\partial \vec T_0^i)_\delta \cap B_R \cap \partial \Pi^i) \qquad\text{[by \eqref{eq:448909}]}\\
    &\le 4\theta R^{d-2} \delta  \le \eps \qquad\text{[by \eqref{eq:01455}]}.
\end{aligned}
\end{equation}
We have
\begin{align*}
  \MoveEqLeft{\Per(\tilde{\vec E}_n, \bar B_{R-\rho}(\vec x_n^i)\setminus B_{R_n}) }\\
  &= \Per(\tilde{\vec E}_n-\vec x_n^i, \bar B_{R-\rho}\setminus B_{R_n}(-\vec x_n^i)) \\
  &= \Per(\vec H^i, \bar B_{R-\rho}\setminus B_{R_n}(-\vec x_n^i))\\
  &= \Per(\vec S-\vec x_n^i, \bar B_{R-\rho}\setminus \bar B_{R_n}(-\vec x_n^i)) \\
   & \quad + \Per(\vec H^i, B_{R-\rho} \cap \partial B_{R_n}(-\vec x_n^i)) \\
  &\le \Per(\vec S-\vec x_n^i, B_R \setminus (\bar \Pi^i-\alpha \nu^i)) + \eps \qquad\text{[by \eqref{EP-grossi} and \eqref{eq:43444}]} \\
  &\le \Per(\vec T^i, B_R \setminus (\bar \Pi^i-\alpha \nu^i)) + 2\eps \qquad\text{[by \eqref{EP-cont2}]} \\
  &\le \Per(\vec T^i, B_R \setminus (\Pi^i + 2\alpha \nu^i)) + 3\eps \qquad\text{[by \eqref{EP-eq:fik36}]}\\
  &\le \Per(\vec T^i, B_{R-\rho} \setminus (\Pi^i+2\alpha\nu^i)) + 4\eps \qquad \text{[by \eqref{eq:fik38}]}\\ 
  &= \Per(\vec G^i, \bar B_{R-\rho}\setminus (\Pi^i+2\alpha \nu^i)) + 4\eps\\
\end{align*}
while
\begin{align*}
 \Per(\tilde{\vec E}_n, \bar B_{R-\rho}(\vec x_n^i)\cap B_{R_n})
  &= \Per(\vec G^i-\alpha \nu^i,  B_{R-\rho}\cap B_{R_n}(-\vec x_n^i)) \\
  &\quad + \Per(\tilde{\vec E}_n,\partial B_{R-\rho}(\vec x^i_n)\cap B_{R_n})\\
  &\le \Per(\vec G^i, (B_{R-\rho}\cap B_{R_n}(-\vec x_n^i)) + \alpha \nu^i) + 2\eps\quad\text{[by \eqref{EP-bordomega}]}\\
  &\le \Per(\vec G^i, B_{R+\alpha}\cap(\Pi^i+2\alpha\nu^i)) +2\eps \qquad\text{[by \eqref{EP-grossi2}]}\\
  &\le \Per(\vec G^i, B_{R-\alpha}\cap(\Pi^i + 2\alpha\nu^i)) + 3\eps \qquad\text{[by \eqref{eq:fik39}]}
\end{align*}
so, recalling that 
$(\hat{\vec E}_n\triangle\tilde{\vec E}_n)\cap B_R(\vec x_n^i)
\subset \vec x_n^i -\alpha\nu^i+ (\hat{\vec G}^i \triangle \vec G^i) 
\Subset B_{R-1}(\vec x_n^i)-\alpha \nu^i
\subset B_{R-1+\alpha}(\vec x_n^i)\subseteq B_{R-\alpha}(\vec x_n^i)\Subset B_{R-\rho}$
\begin{equation}
\label{eq:30893}
    \begin{aligned}
      \Per(\hat {\vec E}_n, \bar B_{R-\rho}(\vec x_n^i)) 
      &\le \Per(\tilde {\vec E}_n, \bar B_{R-\rho}(\vec x_n^i)) + \eps \qquad\text{[by \eqref{EP-eq:4824}]}\\
      &= \Per(\tilde{\vec E}_n, \bar B_{R-\rho}(\vec x_n^i)\setminus B_{R_n}) \\
      &\quad + \Per(\tilde{\vec E}_n, \bar B_{R-\rho}(\vec x_n^i)\cap B_{R_n}) + \eps \\
      &\le \Per(\vec G^i, \bar B_{R-\rho}) + 8\eps
    \end{aligned}
\end{equation}
and the proof is concluded:
\begin{align*}
 \sum_{i\in I} \Per(\vec F^i,B_R) 
  &\le \sum_{i\in I} \Enclose{\Per(\vec E_n, B_R(\vec x_n^i)) + \eps} \qquad\text{[by~\eqref{EP-semicon}]}\\
  &\le \sum_{i\in I} \Enclose{\Per(\vec E_n, \bar B_{R-\rho}(\vec x_n^i)) + 2\eps} \qquad\text{[by~\eqref{eq:449367}]}\\
  &\le \sum_{i\in I} \Enclose{\Per(\hat{\vec E}_n, \bar B_{R-\rho}(\vec x_n^i)) + 2\eps} \qquad\text{[by~\eqref{EP-eq:4u8367}]}\\
  &\le \sum_{i\in I} \Enclose{\Per(\vec G^i, \bar B_{R-\rho}) + 10\eps} \qquad\text{[by \eqref{eq:30893}]}\\
  &\le \sum_{i\in I} \Enclose{\Per(\vec G^i, B_R) +10\eps}
\end{align*}
recalling that $\vec F^i \triangle \vec G^i \subset K \Subset B_{R-1}$, and that $I$ is finite.
\end{proof}

\section{Monotonicity formula and blow-down partitions}\label{secmono}

% We recall the following classical result (see for example ?\cite{Sim83}).
% \begin{theorem}\label{teosimon}
% Let $E\subset \RR^d$ be a locally minimal set in $\RR^d$ such that 
% \[
%     \lim_{\rho\to +\infty} \frac{\Per(E,B_\rho)}{\omega_{d-1}\rho^{d-1}} = 1.
% \]
% Then $E$ is an half-space (up to a negligible set).
% \end{theorem}

The following result is proven in \cite[Theorem 1.9, 2.1 and Proposition~4.1]{MagNov22}, 
along the lines of Allard \cite{All72}.

\begin{theorem}[Asymptotic expansion of planelike stationary varifolds]\label{teoallard}
Let $E\subset \RR^d\setminus \bar B_1$ be a set of locally finite perimeter such that ${\bf v}(\partial^* E\setminus \bar B_1,1)$ is a   
stationary varifold in $\RR^d\setminus \bar B_1$. Suppose that 
\[
\lim_{\rho\to +\infty} \frac{\Per(E,B_\rho\setminus \bar B_1)}{\omega_{d-1}\rho^{d-1}}=1,
\]
and, for some $r_k\to+\infty$, 
\[
    \frac{E}{r_k} \to  \ENCLOSE{x\in \RR^d\colon x_d < 0}
    \qquad\text{in $L^1_\loc(\RR^d)$ as $k\to +\infty$.}
\]
Then there exists $f\colon \RR^{d-1}\setminus\bar B_1\to \RR$ such that 
$E = \ENCLOSE{(x,y)\in \RR^{d-1}\times\RR\colon f(x)< y}$, up to a negligible set. 
Moreover, if $d\ge 4$ the function $f$ has the asymptotic expansion
\begin{equation}\label{expansion}
f(x) = a + \frac{b}{|x|^{d-3}} + O\left( \frac{1}{|x|^{d-2}}\right)
\qquad \text{as $|x|\to +\infty$,}
\end{equation}
with $a,b\in \RR$.

In particular $\frac E r$ converges to a half-space as $r\to +\infty$, and the blow-down is unique.
\end{theorem}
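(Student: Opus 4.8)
The plan is to reduce the statement to the asymptotic analysis of a solution of the minimal surface equation over the exterior of a ball in $\RR^{d-1}$, and then to obtain the expansion by a Green's function perturbation argument, following Allard \cite{All72} and \cite{MagNov22}. First, since the associated multiplicity-one varifold on $\partial^*E\setminus\bar B_1$ is stationary, the classical monotonicity formula applies to the density ratio $\rho\mapsto\Per(E,B_\rho(\vec x))/(\omega_{d-1}\rho^{d-1})$ on $\RR^d\setminus\bar B_1$; together with the hypothesis that this ratio tends to $1$ as $\rho\to\infty$, monotonicity forces every density to be $\le 1$, hence (by the Allard lower density bound) exactly $1$, at points of $\partial^*E$ far from $\bar B_1$. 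Thus every blow-down limit of $E$ is a stationary integral cone of density $1$, i.e.\ a multiplicity-one hyperplane. Rotating coordinates so that $E/r_k\to\ENCLOSE{x_d<0}$ in $L^1_\loc$, Allard's regularity theorem then gives, at every large scale $\rho$, that the varifold is on $B_{2\rho}\setminus B_\rho$ a $C^{1,\alpha}$ graph with small gradient over some hyperplane $P_\rho$; the excess-decay iteration yields $\mathrm{dist}(P_\rho,P_{2\rho})\lesssim\mathbf E(\rho)$ with $\mathbf E(2\rho)\le\tfrac12\mathbf E(\rho)$, so $P_\rho$ converges, and the subsequence $r_k$ identifies the limit with $\ENCLOSE{x_d=0}$. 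Hence there are $R_0>1$ and a smooth $f\colon\RR^{d-1}\setminus\bar B_{R_0}\to\RR$ with $E=\ENCLOSE{(x,y)\colon f(x)<y}$ on $\ENCLOSE{\abs x>R_0}$, $\abs{f(x)}=o(\abs x)$ and $\nabla f(x)\to 0$.

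Stationarity translates into the minimal surface equation $\operatorname{div}\bigl(\nabla f/\sqrt{1+\abs{\nabla f}^2}\bigr)=0$, equivalently $\Delta f=\operatorname{div}Q(\nabla f)$ with $Q\in C^\infty$ and $Q(p)=O(\abs p^3)$. Since $\abs{\nabla f}$ is small outside $B_{R_0}$, this is a uniformly elliptic equation; rescaled Schauder estimates on dyadic annuli give $\abs x\,\abs{\nabla f(x)}+\abs x^2\abs{\nabla^2 f(x)}\le C\,\mathrm{osc}_{B_{3\abs x}\setminus B_{\abs x/2}}f$, and a standard decay estimate for uniformly elliptic equations on exterior domains (via the De Giorgi--Nash--Moser Harnack inequality) shows that $f$ is bounded, that $f(x)\to a$ for some $a\in\RR$, and that $\abs{f(x)-a}\le C\abs x^{-\gamma}$ for some $\gamma>0$. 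Set $g\defeq f-a$, so $g$ solves the same equation on $\RR^{d-1}\setminus\bar B_{R_0}$ with $g(x)\to0$, $\abs x\,\abs{\nabla g(x)}\to0$ and $\abs{g(x)}\le C\abs x^{-\gamma}$.

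Now the hypothesis $d\ge 4$ enters, so that $d-1\ge 3$ and the exterior Laplacian in $\RR^{d-1}$ has a decaying fundamental solution of order $\abs x^{-(d-3)}$. Using the Green function $G$ of the exterior domain with decay imposed at infinity, write $g=\varphi+\Phi[h]$, where $h\defeq\operatorname{div}Q(\nabla g)$, $\Phi[h](x)\defeq\int G(x,y)h(y)\,dy$, and $\varphi\defeq g-\Phi[h]$ is harmonic and decaying; expanding $\varphi$ in spherical harmonics on $\SS^{d-2}$ gives $\varphi(x)=b\abs x^{-(d-3)}+O(\abs x^{-(d-2)})$. It remains to absorb $\Phi[h]$, up to its contribution to the coefficient $b$, into $O(\abs x^{-(d-2)})$. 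From the gradient estimate, $\abs{g}\le C\abs x^{-\beta}$ forces $\abs{Q(\nabla g)}\lesssim\abs x^{-3(\beta+1)}$ and $\abs{h}\lesssim\abs x^{-(3\beta+4)}$, whence $\abs{\Phi[h](x)}\lesssim\abs x^{-\min(d-3,\,3\beta+2)}$ and so $\abs{g(x)}\lesssim\abs x^{-\min(d-3,\,3\beta+2)}$; iterating this from $\beta=\gamma$ reaches $\abs{g}\lesssim\abs x^{-(d-3)}$ in finitely many steps. Then $\abs{h}\lesssim\abs x^{-(3d-5)}$ is integrable against both $1$ and $\abs y$ on $\RR^{d-1}$, so a Taylor expansion $G(x,y)=c_d\abs x^{-(d-3)}+O(\abs x^{-(d-2)}\abs y)$ valid for $\abs y\le\abs x/2$, together with the far-field bound $\lesssim\abs x^{-(3d-7)}=O(\abs x^{-(d-2)})$ (using $d\ge 4$), yields $\Phi[h](x)=c_h\abs x^{-(d-3)}+O(\abs x^{-(d-2)})$. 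Combining, $g(x)=b'\abs x^{-(d-3)}+O(\abs x^{-(d-2)})$ and therefore $f(x)=a+b'\abs x^{-(d-3)}+O(\abs x^{-(d-2)})$, which is \eqref{expansion}. Uniqueness of the blow-down is then immediate: $\abs{f(x)}=o(\abs x)$ forces $E/r\to\ENCLOSE{x_d<0}$ in $L^1_\loc$ for \emph{every} $r\to\infty$, not only along $r_k$.

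The two places where I expect genuine work are, in the first step, proving that the approximating hyperplanes $P_\rho$ actually converge, i.e.\ that the excess-decay iteration can be run globally so that $E$ is a graph over a \emph{fixed} hyperplane rather than only locally; and, in the last step, running the nonlinear bootstrap without losing the rate at the borderline exponent $d-3$ and correctly separating the monopole contribution of $\Phi[h]$ to the leading coefficient from the genuine $O(\abs x^{-(d-2)})$ remainder. The restriction $d\ge 4$ is essential precisely here: for $d=3$ one has $d-3=0$, the ``leading term'' $\abs x^{-(d-3)}$ collapses into the constant $a$, and a logarithmic term governed by the total ``charge'' $\int h$ may appear, so a different normalization would be needed.
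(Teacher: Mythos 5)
First, a caveat about the comparison: the paper does not prove this theorem at all — it is quoted from \cite[Theorems 1.9, 2.1 and Proposition 4.1]{MagNov22}, so there is no in-paper proof to match your argument against. Your sketch does follow the general strategy of that reference and of Allard: regularity at infinity, reduction to an exterior solution of the minimal surface equation, harmonic expansion plus a nonlinear bootstrap. The PDE half of your argument — the Green's function decomposition, the iteration improving the decay exponent from $\gamma$ up to $d-3$, and the separation of the monopole contribution from the $O(|x|^{-(d-2)})$ remainder — is essentially sound, and is indeed where $d\ge 4$ enters.

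The genuine gap is in the first half, and it is the one you flag yourself but underestimate. Your claim that ``monotonicity forces every density to be $\le 1$'' at far points does not follow as stated: the varifold is stationary only in $\RR^d\setminus\bar B_1$, so the monotonicity formula centered at $x$ is available only for radii $\rho<|x|-1$, and at that maximal radius the density ratio is bounded by roughly $2^{d-1}$, not by $1$; to get density exactly $1$ you already need to know that the varifold is close to a multiplicity-one hyperplane at \emph{every} large scale, not merely along $r_k$ (this is fixable, via compactness of blow-downs, removability of the origin, and rigidity of density-one stationary cones, but it must be argued). More seriously, the asserted decay $\mathbf E(2\rho)\le\tfrac12\mathbf E(\rho)$ together with $\mathrm{dist}(P_\rho,P_{2\rho})\lesssim\mathbf E(\rho)$ is not a consequence of Allard's theorem: Allard's iteration improves flatness at \emph{decreasing} scales (blow-ups), whereas here you must propagate flatness to \emph{increasing} scales starting from the single subsequence $r_k$. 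A priori the angles between consecutive planes $P_\rho$ and $P_{2\rho}$ only tend to zero, with no summability, which does not yield convergence of $P_\rho$ nor global graphicality over a fixed hyperplane. Closing exactly this loop is the content of the mesoscale flatness criterion of \cite{MagNov22} and is the hard part of the proof; as written, your argument assumes its conclusion.
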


We also recall a result from \cite[Corollary 4.6]{BroNov24}.

\begin{theorem}[Existence of blow-down partitions]
\label{th:blow-down}
Let $\vec F$ be a $N$-isoperimetric partition of $\RR^d$, and let $R_k\to +\infty$. 
Then, up to a non-relabeled subsequence, as $k\to\infty$ we have
    \begin{eqnarray*}
    &&\frac{\vec F}{R_k} \to \vec F_\infty \qquad \text{in $L^1_{loc}(\RR^d)$},
    \\
    && \frac{\Per(\vec F, B_{R_k})}{R_k^{d-1}} \to \Per(\vec F_\infty, B_1),
    \end{eqnarray*}
    where $\vec F_\infty$ is a (possibly improper) $N$-isoperimetric conical partition of $\RR^d$. 
\end{theorem}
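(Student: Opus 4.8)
The plan is to prove the statement of Theorem~\ref{th:blow-down} (Existence of blow-down partitions): given an $N$-isoperimetric partition $\vec F$ of $\RR^d$ and radii $R_k\to+\infty$, after passing to a subsequence one has $\vec F/R_k \to \vec F_\infty$ in $L^1_{\loc}$ with the perimeters converging to $\Per(\vec F_\infty, B_1)$, and $\vec F_\infty$ is an $N$-isoperimetric conical partition.

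\begin{proof}[Proof proposal]
The approach combines a uniform perimeter density bound (to feed the compactness Theorem~\ref{teocon}), lower and upper semicontinuity of perimeter under the rescalings, and the monotonicity formula to upgrade the limit to a cone; then the closure result Theorem~\ref{th:teoclosure} (in the simplest case $\Pi^i=\RR^d$, no boundary data, single $i$) promotes the limit partition to an isoperimetric one.

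First I would establish that $\vec F$ satisfies uniform perimeter density bounds: since $\vec F$ is $N$-isoperimetric (equivalently locally isoperimetric), comparison of each region with a competitor obtained by filling a ball $B_\rho(\vec x)$ with the region of largest intersection (using the volume-fixing Theorem~\ref{th:volume_fixing_variations_alternative} to restore volumes) yields $\Per(\vec F, B_\rho(\vec x))\le C\rho^{d-1}$ for all $\vec x\in\RR^d$, $\rho>0$, with $C=C(d,N)$. Setting $\vec F_k \defeq \vec F/R_k$, this scales to $\Per(\vec F_k, B_\rho(\vec x))\le C\rho^{d-1}$ uniformly in $k$, so in particular $\sup_{\vec x,k}\Per(\vec F_k, B_1(\vec x))<+\infty$. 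The rescaled partitions are proper with possibly infinite masses, so the hypotheses of Theorem~\ref{teocon} are met (the finite-mass implication is vacuous or handled by the same scaling); applying it with all translations $\vec x_n^i$ taken to be $\vec 0$ — or rather extracting only the ``$i=0$'' bubble centered at the origin — gives a subsequence and an $N$-partition $\vec F_\infty$ with $\vec F_k\to\vec F_\infty$ in $L^1_{\loc}$.

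Next I would identify the blow-down limit as a cone and get the perimeter convergence. Lower semicontinuity of perimeter under $L^1_{\loc}$ convergence gives $\Per(\vec F_\infty, B_1)\le\liminf_k \Per(\vec F_k, B_1) = \liminf_k R_k^{-(d-1)}\Per(\vec F, B_{R_k})$. For the reverse inequality and the conical structure, I would invoke the monotonicity formula for isoperimetric partitions (Theorem~\ref{th:monotonicity-formula}, referenced in the introduction as being proven in Section~\ref{secmono}): the rescaled density ratio $\rho\mapsto \rho^{-(d-1)}\Per(\vec F, B_\rho)$ is monotone non-decreasing in $\rho$ up to a controlled error that vanishes at infinity, hence has a limit $\Theta_\infty$ as $\rho\to+\infty$; this forces $\lim_k R_k^{-(d-1)}\Per(\vec F, B_{R_k}) = \Theta_\infty$ along the full sequence, matching $\Per(\vec F_\infty, B_1)$ once one checks the limit saturates the semicontinuity inequality. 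The equality case of monotonicity (again from Theorem~\ref{th:monotonicity-formula}) then implies $\vec F_\infty$ is dilation-invariant, i.e.\ a conical partition. Finally, to see $\vec F_\infty$ is itself $N$-isoperimetric, I would apply Theorem~\ref{th:teoclosure} with $\vec S$ an arbitrary cone with the same density bound — or more directly a standalone version of its argument — in the degenerate situation where there is only one bubble, $\Pi^0=\RR^d$, and no relevant boundary data: the gluing/volume-fixing construction shows any compactly supported volume-preserving competitor for $\vec F_\infty$ can be transplanted into $\vec F_k$ for large $k$, contradicting the minimality of $\vec F$ unless $\vec F_\infty$ is minimal.

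The main obstacle is the interplay between the volume constraint and the scaling: when some region of $\vec F$ has finite volume, its rescaling $\vec F_k$ has volume tending to $0$, so $\vec F_\infty$ may be improper, and one must be careful that the ``$J$-isoperimetric'' constraint in the limit is the vacuous one (no finite-volume regions survive, so $J=\emptyset$ for $\vec F_\infty$) — this is exactly why the statement says ``possibly improper'' and drops to $N$-isoperimetric without a genuine volume constraint at infinity. A secondary technical point is ensuring the error term in the monotonicity formula, which encodes the failure of exact stationarity coming from the Lagrange multipliers (mean curvature) of the isoperimetric problem, decays fast enough that the density ratio genuinely converges; this is where the precise form of Theorem~\ref{th:monotonicity-formula} is needed. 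I would also need a short argument, via the density bound and a covering, that no perimeter escapes to infinity inside $B_1$, so that the semicontinuity inequality is in fact an equality and the stated perimeter convergence holds.
\end{proof}
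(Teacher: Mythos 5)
This theorem is not proved in the paper: it is explicitly recalled from \cite[Corollary 4.6]{BroNov24}, so there is no internal argument to compare yours against. Your outline follows the standard blow-down strategy (uniform density bounds, $L^1_\loc$ compactness of the rescalings, monotonicity of the density ratio, semicontinuity plus minimality for perimeter convergence, equality in monotonicity for conicity, closure for minimality of the limit), and as a plan it is essentially the right one.

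Two places in your sketch are thinner than they should be. First, before you can invoke anything like Theorem~\ref{th:monotonicity-formula}, you need the finite-volume regions of $\vec F$ to be bounded: that theorem explicitly assumes $F^k\subset B_R$ whenever $\abs{F^k}<+\infty$, and the comparison with cones in its proof keeps those regions untouched precisely because they sit inside $B_R$. Boundedness follows from standard density estimates for isoperimetric partitions (as the paper itself uses in Proposition~\ref{prostima}, citing \cite[Theorem~2.4]{NovPaoTor25}), but it is a genuine prerequisite you never establish. Second, the perimeter convergence $\Per(\vec F,B_{R_k})/R_k^{d-1}\to\Per(\vec F_\infty,B_1)$ is not a matter of ``perimeter escaping to infinity inside $B_1$'': lower semicontinuity is automatic, and the nontrivial direction is the $\limsup$ inequality, which requires gluing $\vec F_\infty$ into $\vec F/R_k$ on $B_1$ as a competitor (via Lemma~\ref{lm:glueing} and Theorem~\ref{th:volume_fixing_variations_alternative}) and using the minimality of $\vec F$ — you do gesture at this transplanting argument at the end, but you attribute the difficulty to the wrong mechanism. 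A smaller remark: Theorem~\ref{th:teoclosure} as stated cannot be applied literally, since the rescalings carry no conical exterior datum $\vec S$ on $\partial B_{R_n}$; what you need is the unconstrained closure result (\cite[Theorem~2.13]{NovPaoTor25}), of which Theorem~\ref{th:teoclosure} is the version with boundary data. None of these is a fatal flaw, but each must be filled in for the sketch to become a proof.
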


We plan to extend the previous results to isoperimetric partitions in a half-space $\Pi$ of $\RR^d$,
with boundary given by a $d-2$-dimensional affine space contained in $\partial\Pi$.
We start by showing the following monotonicity property.

\begin{theorem}[Monotonicity formula for isoperimetric partitions in a cone]
\label{th:monotonicity-formula}
    Let $\Pi$ be an open cone in $\RR^d$ with vertex in $0$.
    Let $\vec F$ be a Caccioppoli $N$-partition of $\RR^d$ 
    such that each region of $\vec F \setminus \bar \Pi$ is also a cone with vertex in $0$ (boundary condition).

    Suppose that $\vec F$ is locally isoperimetric in $\bar \Pi$.
    %in the sense  
    %that for all bounded $B\subset \RR^d$ and all $\vec F'$ with $\vec F'\triangle \vec F\subset B\cap \bar \Pi$
    %and $\abs{F'_k} = \abs{F_k}$ for all $k=1,\dots, N$
    %one has 
    %\[
    %    \Per(\vec F, B)\le \Per(\vec F', B).
    %\]    
    Suppose there exists $R>0$ be such that $F^k\subset B_R$ if $\abs{F^k}<+\infty$.
    Then
    \[
        \rho \mapsto \frac{\Per(\vec F,B_\rho\setminus B_R)}{\rho^{d-1}} 
    \]
     is non-decreasing on $[R,+\infty)$, so that 
    \[
        \rho \mapsto \frac{\Per(\vec F,B_\rho\cap \bar \Pi)}{\rho^{d-1}} =
      \frac{\Per(\vec F,B_\rho\setminus B_R) + \Per (\vec F,B_R)}{\rho^{d-1}} -\Per (\vec F,B_1\setminus\bar \Pi)  
    \]
  has limit as $\rho \to +\infty$ .
\end{theorem}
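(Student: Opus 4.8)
The plan is to derive the monotonicity of $\rho \mapsto \Per(\vec F, B_\rho \setminus B_R)/\rho^{d-1}$ by a comparison argument exploiting the cone structure of $\vec F$ outside $\bar B_R$, combined with the local isoperimetric minimality in $\bar\Pi$. First I would fix $\rho > R$ and, for a parameter $\sigma \in (R, \rho)$, construct a competitor partition $\vec G_\sigma$ which agrees with $\vec F$ outside $B_\rho$, coincides with $\vec F$ inside $B_R$ (so the bounded regions and their volumes are untouched), and on the annulus $B_\rho \setminus B_R$ replaces $\vec F$ with the cone generated by the trace of $\vec F$ on $\partial B_\sigma$. Concretely, on $B_\sigma \setminus B_R$ one keeps the cone $\vec F$ itself (it is already conical there), and on $B_\rho \setminus B_\sigma$ one uses the radial projection of $\vec F \cap \partial B_\sigma$; since $\vec F$ is already a cone on all of $\RR^d \setminus \bar B_R$, in fact $\vec G_\sigma = \vec F$ on the whole annulus, so this naive competitor gains nothing — the point is rather the reverse comparison, using that any nearby competitor cannot do better than $\vec F$.

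**The right competitor.** The correct move is the classical cone comparison in the form used for stationary varifolds and minimizing currents: for $R < \sigma < \rho$, build $\vec G$ by leaving $\vec F$ unchanged on $B_\sigma$ and on $\RR^d \setminus B_\rho$, and on $B_\rho \setminus B_\sigma$ taking the partition whose interface is the cone over $\partial^* \vec F \cap \partial B_\sigma$ rescaled to fill the annulus — but we must be slightly careful because this cone-over-the-sphere construction changes the partition near $\partial B_\rho$, where it must be reconciled with $\vec F$. Since $\vec F$ is itself conical on $\RR^d \setminus \bar B_R$, the trace of $\vec F$ on $\partial B_\rho$ is exactly the radial image of its trace on $\partial B_\sigma$, so the matching on $\partial B_\rho$ is automatic and $\vec G$ is a legitimate partition with $\vec G \triangle \vec F \Subset B_\rho \setminus \bar B_R$ and $\abs{G_k \cap K} = \abs{F_k \cap K}$ for the finite-volume regions $k$ (these lie in $B_R$ and are not touched). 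Applying local isoperimetric minimality on a compact $K \supset \bar B_\rho$ gives $\Per(\vec F, B_\rho \setminus B_\sigma) \le \Per(\vec G, B_\rho \setminus B_\sigma)$. Now $\Per(\vec G, B_\rho \setminus B_\sigma)$ is the area of the cone over $\partial^* \vec F \cap \partial B_\sigma$, which by the coarea-type formula for cones equals $\frac{1}{d-1}\bigl(\rho^{d-1} - \sigma^{d-1}\bigr)\sigma^{-(d-2)}\, \H^{d-2}(\partial^* \vec F \cap \partial B_\sigma)$, while $\Per(\vec F, B_\rho \setminus B_\sigma)$, again because $\vec F$ is conical, equals $\int_\sigma^\rho r^{-(d-2)} \H^{d-2}(\partial^*\vec F \cap \partial B_r)\, dr$ up to the appropriate Jacobian factor for the radial slicing of a cone. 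Setting $g(r) := \H^{d-2}(\partial^* \vec F \cap \partial B_r)$, which for a cone satisfies $g(r) = (r/\sigma)^{d-2} g(\sigma)$, one then finds that the minimality inequality, after dividing by $\rho^{d-1}$ and letting $\rho \downarrow \sigma$, yields the differential inequality $\frac{d}{d\rho}\bigl[\Per(\vec F, B_\rho \setminus B_R) \rho^{-(d-1)}\bigr] \ge 0$ in the sense of distributions, which is exactly monotonicity.

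**Conclusion and main obstacle.** Once monotonicity is established, the second displayed identity is just algebra: $\Per(\vec F, B_\rho \cap \bar\Pi) = \Per(\vec F, B_\rho) - \Per(\vec F, B_\rho \setminus \bar\Pi)$, and $\Per(\vec F, B_\rho \setminus \bar\Pi)$ scales like $\rho^{d-1}$ because $\vec F \setminus \bar\Pi$ is a cone, so $\Per(\vec F, B_\rho \setminus \bar\Pi) = \rho^{d-1}\Per(\vec F, B_1 \setminus \bar\Pi)$; splitting $\Per(\vec F, B_\rho) = \Per(\vec F, B_R) + \Per(\vec F, B_\rho \setminus B_R)$ gives the stated formula, and the limit as $\rho \to +\infty$ exists because a bounded-below monotone function has a limit in $[0,+\infty]$ — here finiteness of $\theta$-type density bounds is not even needed for mere existence of the (possibly infinite) limit, though in our application it will be finite. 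The main obstacle is the competitor construction: one must verify that replacing $\vec F$ on an annulus by a cone-over-a-sphere yields a genuine \emph{partition} (the interface of the cone must still separate $\RR^d$ into the right number of pieces, which is automatic from the conical structure of $\vec F$ but requires care at the set-theoretic level), that the variation is compactly supported away from $\bar B_R$ so the finite-volume constraint is preserved for free, and that the boundary condition (each region of $\vec F \setminus \bar\Pi$ being a cone) is compatible with the localization — in particular that $\vec G$ still satisfies $\vec G \setminus \bar\Pi = \vec F \setminus \bar\Pi$ is not required, only that $\vec G$ is an admissible competitor in the sense of the definition of locally isoperimetric in $\bar\Pi$, i.e. the volume constraint \eqref{eq:iso-measure} holds and the variation sits in a compact $K$. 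Handling the slicing rigorously — i.e. making precise that $\rho \mapsto \H^{d-2}(\partial^*\vec F \cap \partial B_\rho)$ is a well-defined, slicing-compatible quantity and that the coarea formula applies to the reduced boundary of each region — is the technical heart, and I would lean on the standard slicing theory for sets of finite perimeter (as in the references to \cite{NovPaoTor25}) to push it through.
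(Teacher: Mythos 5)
Your argument rests on a misreading of the hypotheses that breaks the whole construction. You repeatedly assert that ``$\vec F$ is already a cone on all of $\RR^d \setminus \bar B_R$''; this is false. The hypothesis is that each region of $\vec F \setminus \bar\Pi$ is a cone --- i.e.\ the partition is prescribed to be conical only \emph{outside} the cone $\Pi$ --- while inside $\bar\Pi$ the partition is merely locally isoperimetric and a priori has no conical structure whatsoever outside $B_R$. (If $\vec F$ really were conical on $\RR^d\setminus\bar B_R$, the density ratio $\Per(\vec F,B_\rho\setminus B_R)/\rho^{d-1}$ would be constant and the theorem would be vacuous; the whole point is to prove monotonicity for a non-conical minimizer so that blow-downs exist.) This false premise is load-bearing in two places: (a) your claim that the outward radial extension of the trace of $\vec F$ on $\partial B_\sigma$ matches $\vec F$ on $\partial B_\rho$, which is what makes your competitor compactly supported --- without conicality this matching fails and your competitor creates uncontrolled extra interface on $\partial B_\rho$; and (b) your computation of $\Per(\vec F, B_\rho\setminus B_\sigma)$ via $g(r)=(r/\sigma)^{d-2}g(\sigma)$, which only holds for cones. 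Under your reading the competitor coincides with $\vec F$ and the comparison yields nothing, as you yourself observe for the ``naive'' version.

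The correct competitor points \emph{inward}, not outward: for a.e.\ $\rho>R$ take the cone $\vec C_\rho$ over the trace of $\vec F$ on $\partial B_\rho$ with vertex at the origin, filling the whole ball $B_\rho$, and define $\vec F_\rho$ by keeping the finite-volume regions (contained in $B_R$, so the volume constraint is free), keeping $\vec F$ outside $B_\rho$, and using $\vec C_\rho$ for the infinite regions inside $B_\rho$. The matching on $\partial B_\rho$ is automatic by construction of the cone over that very sphere, and $\vec F_\rho\triangle\vec F\subset\bar\Pi$ precisely because $\vec F\setminus\bar\Pi$ is conical (so the cone over the trace reproduces it). One then has $\Per(\vec F_\rho,B_\rho)\le \frac{\rho}{d-1}\H^{d-2}(\partial\vec F\cap\partial B_\rho)+\Per(\vec F,B_R)$, and the coarea inequality $u'(\rho)\ge\H^{d-2}(\partial\vec F\cap\partial B_\rho)$ with $u(\rho)=\Per(\vec F,B_\rho)$ turns minimality into the differential inequality $u(\rho)\le\frac{\rho}{d-1}u'(\rho)+\Per(\vec F,B_R)$, i.e.\ monotonicity of $\rho\mapsto(u(\rho)-\Per(\vec F,B_R))\rho^{1-d}$. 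Your final paragraph (the algebraic identity using $\Per(\vec F,B_\rho\setminus\bar\Pi)=\rho^{d-1}\Per(\vec F,B_1\setminus\bar\Pi)$ and existence of the limit of a monotone function) is fine once the monotonicity is actually established.
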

\begin{proof}
Without loss of generality suppose that $\abs{F^k}<+\infty$ if and only if $k\le M$ for some $M<N$.
Let $u(\rho)\defeq \Per(\vec F, B_\rho)$.
By the coarea inequality we know that for a.e.\ $\rho>0$ one has 
\[
u'(\rho) \ge \H^{d-2}(\partial \vec F\cap \partial B_\rho).
\]
For any $\rho>R$ lets consider the cone with the same boundary of $\vec F$ on $\partial B_\rho$
which is a partition $\vec C_\rho=(C_\rho^1, \dots, C_\rho^N)$ such that 
$x\in C_\rho^k$ if and only if $\rho \frac{x}{\abs{x}}\in F_k$.
Notice that $C_\rho^k = \emptyset$ for $k\le M$.
For a.e.\ $\rho>R$ one has
\[
    \Per(\vec C_\rho,\bar B_\rho) = \frac{\rho}{d-1} \H^{d-2}(\partial \vec F \cap \partial B_\rho)
\]
and $\vec C_\rho \triangle \vec F \subset \bar \Pi$.
For $\rho>R$ we are going to build a competitor to $\vec F$ by taking $\vec F_\rho =(F_\rho^1, \dots, F_\rho^N)$ with
\[
    F_\rho^k \defeq \begin{cases}
    F^k& \text{if $1 \le k \le M $}\\
    (F^k\setminus B_\rho) \cup (C_\rho^k\cap B_\rho) \setminus \displaystyle \bigcup_{j=1}^M F_j& \text{if $M < k \le N$}
\end{cases}
\]
so that $\vec F_\rho \triangle \vec F \subset \bar \Pi$ and, for a.e.\ $\rho>R$,
\[
 \Per(\vec F_\rho, B_\rho) \le \frac{r}{d-1} \H^{d-2}(\partial \vec F \cap \partial B_\rho) + \Per(\vec F,B_R).
\]
Hence for a.e.\ $\rho>R$
\begin{align*}
u(\rho) 
    &= P(\vec F,B_\rho) 
    \le P(\vec F_\rho, B_\rho)
    \le \frac{\rho}{d-1} \H^{d-2} (\partial \vec F \cap \partial B_\rho) + \Per(\vec F, B_R)\\
    &\le \frac{\rho}{d-1} u'(\rho) + \Per(\vec F, B_R).
\end{align*}
This is equivalent to
\[
 u'(\rho)\cdot  \rho^{1-d} - (d-1)\cdot u(\rho)\cdot \rho^{-d} + (d-1)\cdot\Per(\vec F, B_R) \cdot \rho^{-d}\ge 0
\]
or
\[
\frac{d}{d\rho} \Enclose{u(\rho)\cdot \rho^{1-d} - \Per(\vec F, B_R) \cdot \rho^{1-d}}\ge 0
\]
which means that 
\[
\rho \mapsto \frac{\Per(\vec F,B_\rho) - \Per(\vec F, B_R)}{\rho^{d-1}} 
%= \frac{\Per(\vec F, B_\rho\setminus B_R)}{\rho^{d-1}}.
\]
is non-decreasing.
Noting that $\vec F$ is a cone outside $\bar \Pi$, 
the function 
\[
\rho \mapsto \frac{\Per(\vec F,B_\rho\setminus \bar \Pi)}{\rho^{d-1}}
\] 
is constant, and the statement follows.
\end{proof}

The following result readily follows from Theorem \ref{th:monotonicity-formula}.
%The finiteness of the limit comes from usual density estimates (see \cite[Lemma 2.5]{NovPaoTor25}).
The proofs remain the same as in \cite[Corollary 4.6]{BroNov24}.

\begin{corollary}[Existence of blow-down partitions in $\Pi$]\label{corinfty}
    Let $\vec F$ and $\Pi$ be as in Theorem \ref{th:monotonicity-formula}, then there exists, and it is finite,
    the limit
    \[
    \Theta_\infty^\Pi(\vec F)\defeq \lim_{r\to +\infty}\frac{\Per(\vec F, B_r\cap \Pi)}{r^{d-1}}.
    \]
    Moreover, let $r_k\to +\infty$. Then, up to a non-relabeled subsequence,
    as $k\to\infty$ we have
    \begin{gather*}
    \frac{\vec F}{r_k} \to \vec F_\infty \qquad \text{in $L^1_{loc}(\RR^d)$},
    \\
    %\frac{\Per(\vec F, B_{R_k}\cap \bar\Pi)}{{R_k}^{d-1}}\to 
    \Per(\vec F_\infty, B_1\cap\bar\Pi)
    = \Theta_\infty^\Pi(\vec F)
    \end{gather*}
    where $\vec F_\infty$ is a conical partition, locally minimal in $\bar\Pi$.
\end{corollary}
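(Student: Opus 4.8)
The plan is to derive Corollary~\ref{corinfty} as a direct consequence of the monotonicity established in Theorem~\ref{th:monotonicity-formula}, together with the compactness theory of Section~\ref{seccomp} applied to the rescaled partitions $\vec F/r_k$. First I would observe that the existence and finiteness of the density $\Theta_\infty^\Pi(\vec F)$ is immediate: by Theorem~\ref{th:monotonicity-formula} the function $\rho\mapsto \rho^{1-d}\big(\Per(\vec F,B_\rho)-\Per(\vec F,B_R)\big)$ is non-decreasing on $[R,+\infty)$, hence it has a limit (finite or $+\infty$) as $\rho\to+\infty$; finiteness follows because $\vec F$ is locally isoperimetric in $\bar\Pi$, so comparing on an annulus $B_{2\rho}\setminus B_\rho$ with the cone having the same boundary trace on $\partial B_{2\rho}$ gives a linear-in-$\rho^{d-1}$ upper bound on $\Per(\vec F,B_{2\rho})$, exactly as in the comparison already carried out inside the proof of Theorem~\ref{th:monotonicity-formula}. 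Since the part of the perimeter outside $\bar\Pi$ is exactly conical, $\Per(\vec F,B_\rho\cap\bar\Pi)=\Per(\vec F,B_\rho)-\Per(\vec F,B_\rho\setminus\bar\Pi)$ and the second term scales like $\rho^{d-1}$, so $\Theta_\infty^\Pi(\vec F)$ exists and is finite.

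Next I would set up the blow-down sequence. Put $\vec F_k\defeq \vec F/r_k$. Each $\vec F_k$ is locally isoperimetric in $\bar\Pi$ (the class of competitors is invariant under dilation, and $\Pi$ is a cone), and outside $\bar\Pi$ it remains conical with a fixed trace on $\partial B_1$. The monotonicity gives a uniform density bound $\Per(\vec F_k,B_\rho(\vec x))\le C\rho^{d-1}$ for all $\vec x$ and all $\rho\ge$ some scale shrinking to $0$; more precisely, for any fixed $\vec x$ and $\rho$, once $k$ is large the ball $B_{r_k\rho}(r_k\vec x)$ lies in the region where the density quotient of $\vec F$ is below $\Theta_\infty^\Pi(\vec F)+1$, giving the needed equibounded-perimeter hypothesis for the compactness theorem. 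Applying Theorem~\ref{teocon} (with trivial translations $\vec x_n^i=0$, since the rescaled partitions all ``concentrate at the origin'') and Lemma~\ref{lm:L1-loc-compactness} yields, up to a subsequence, an $L^1_\loc$-limit partition $\vec F_\infty$ of $\RR^d$. Because the traces on $\partial B_1$ outside $\bar\Pi$ are fixed, $\vec F_\infty$ is conical outside $\bar\Pi$; and one checks, using the convergence of the monotone density quotient together with lower semicontinuity of the perimeter (cf.~\eqref{eq:semi1}), that $\vec F_\infty$ is itself conical inside $\bar\Pi$ as well — the ``almost constant density quotient'' argument forces the limit to be a cone.

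The identity $\Per(\vec F_\infty,B_1\cap\bar\Pi)=\Theta_\infty^\Pi(\vec F)$ then follows by combining lower semicontinuity (which gives $\le$) with the reverse inequality obtained from the exact scaling $\Per(\vec F_k,B_1\cap\bar\Pi)=r_k^{1-d}\Per(\vec F,B_{r_k}\cap\bar\Pi)\to\Theta_\infty^\Pi(\vec F)$ and an argument ruling out perimeter loss at $\partial B_1$ (choose the radius slightly so that no perimeter concentrates on the sphere, as in Lemma~\ref{lm:glueing}). Finally, local minimality of $\vec F_\infty$ in $\bar\Pi$ is inherited from that of the $\vec F_k$ by the closure result of Section~\ref{seccomp}: this is the analogue, in the simpler setting of fixed conical boundary data at scale $1$ and no constraint, of Theorem~\ref{th:teoclosure}, and it is precisely the content referred to by the remark that ``the proofs remain the same as in \cite[Corollary 4.6]{BroNov24}''.

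The main obstacle I expect is not any single estimate but the passage from ``the density quotient converges'' to ``the blow-down limit is conical and attains the limiting density'': one must be careful that no perimeter escapes to infinity or concentrates on the boundary sphere $\partial B_1$ under the rescaling, and that the conical structure outside $\bar\Pi$ interacts correctly with the limit inside — this is exactly where the boundary-data bookkeeping of Theorem~\ref{th:teoclosure} and the good-radius selection of Lemma~\ref{lm:glueing} are used, so the cleanest route is to quote those two tools rather than re-prove the compactness by hand.
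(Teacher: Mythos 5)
Your overall route---monotonicity, rescaling, compactness, semicontinuity combined with minimality for the energy identity, and a closure argument for the minimality of the limit---is exactly the one the paper intends (it delegates to \cite[Corollary 4.6]{BroNov24}, whose proof has this structure). There is, however, one concrete gap: your justification of the \emph{finiteness} of $\Theta_\infty^\Pi(\vec F)$. Monotonicity only gives that the density quotient has a limit in $[0,+\infty]$, and the comparison ``already carried out inside the proof of Theorem~\ref{th:monotonicity-formula}'' cannot upgrade this to finiteness: that comparison bounds $\Per(\vec F,B_\rho)$ by $\frac{\rho}{d-1}\H^{d-2}(\partial\vec F\cap\partial B_\rho)+\Per(\vec F,B_R)$, and the spherical slice $\H^{d-2}(\partial\vec F\cap\partial B_\rho)$ is not a priori $O(\rho^{d-2})$---it is only controlled by $u'(\rho)$, so the inequality is a differential inequality (the source of monotonicity), not an upper bound on $u(\rho)$. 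The same missing upper bound is what you invoke as the ``uniform density bound'' feeding Theorem~\ref{teocon} and Lemma~\ref{lm:L1-loc-compactness}, so the gap propagates into your compactness step.

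The fix is a different competitor: for $\rho>R$ replace $\vec F$ inside $\bar B_\rho\cap\bar\Pi$ by the partition that keeps the finite regions (contained in $B_R$) and assigns all of $(B_\rho\cap\bar\Pi)\setminus B_R$ to a single infinite region, leaving $\vec F$ unchanged outside $\bar\Pi$ and outside $B_\rho$. This variation is supported in the compact set $\bar B_\rho\cap\bar\Pi$ and preserves the volumes of the finite regions, so local isoperimetry gives
\[
\Per(\vec F,\bar B_\rho\cap\bar\Pi)\le \H^{d-1}(\partial B_\rho)+\H^{d-1}(\partial\Pi\cap B_\rho)+\H^{d-1}(\partial B_R)+\Per(\vec F,B_R)\le C\rho^{d-1},
\]
and since $\vec F$ is conical outside $\bar\Pi$ this yields $\Per(\vec F,B_\rho)\le C\rho^{d-1}$ for all $\rho$, hence both the finiteness of $\Theta_\infty^\Pi(\vec F)$ and the uniform perimeter bounds for the rescaled partitions. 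With that inserted, the remaining steps you sketch---rigidity of the constant density quotient forcing the limit to be a cone, the good-radius selection of Lemma~\ref{lm:glueing} to avoid concentration on $\partial B_1$, and the closure argument giving minimality of $\vec F_\infty$ in $\bar\Pi$---are the standard ones and match the argument the paper imports from \cite{BroNov24}.
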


\section{Existence of non-standard isoperimetric partitions}\label{secexist}

The following definition provides a way to measure the perimeter that can be gained by removing a large portion of a 
partition $\vec E$ and replacing it with its blow-down partition.

\begin{definition}[defect]\label{def:defect}
Let $\vec E$ be a Caccioppoli $3$-partition of $\RR^d$ with $\abs{E_1}<+\infty$
and suppose that $\vec E$ has a unique blow-down $\vec E_\infty$. 
Let $\Pi\subset \RR^d$ be a positive cone with vertex in $0\in \RR^d$.
We define the \emph{defect} of $\vec E$ in $\bar \Pi$ as 
\begin{equation}
  \Delta^{\bar\Pi}_{\vec E} \defeq
  \liminf_{r\to +\infty} 
  \frac{\displaystyle 
  \Per(\vec E,\bar\Pi\cap B_r) -
  % \left(\limsup_{R\to +\infty}\frac{\Per(\vec E, \Pi\cap B_R)}{R^{d-1}}\right) r^{d-1}
  \Per(\vec E_\infty,\bar\Pi \cap B_r)
  }
  {\abs{E_1}^{\frac{d-1}{d}}}.
\end{equation}
In the case when $\Pi=\RR^d$ we simply write:
\[
 \Delta_{\vec E} \defeq \Delta_{\vec E}^{\RR^d}.
\]
\end{definition}

\begin{remark}\label{rem:defect}
%If $\vec E$ coincides with a cone $\vec E_\infty$ outside a large ball of radius $r$
%we have that $r^{d-1}R^{1-d}\Per(\vec E,B_R)$ converges to $\Per(\vec E_\infty, B_r)$ as $R\to +\infty$ hence
%we obtain
%\[
%  \Delta_{\vec E} = \frac{\Per(\vec E,B_r) - \Per(\vec E_\infty,B_r)}{\abs{E_1}^{\frac{d-1}d}}.
%\]
%In general, if $\vec E_\infty$ is a blow-down partition of $\vec E$ as in Theorem \ref{th:blow-down},
%%(i.e., $\vec E/R_n\to \vec E_\infty$ in $L^1_\loc$ for some $R_n\to \infty$) 
%then
%\[
%  \Delta_{\vec E} \le \liminf_{r\to +\infty} \frac{ \Per(\vec E,B_r) - \Per(\vec E_\infty,B_r)}
%  {\abs{E_1}^{\frac{d-1}d}}.
%\]
%%The same is true if $\vec E_\infty$ is any blow down of an isoperimetric partition (see Theorem~\ref{th:blow-down} and Theorem~\ref{th:monotonicity-formula}). 
%%
Notice that, if $\vec E = (E, \emptyset, \RR^d\setminus E)$ is the $3$-partition associated to a set $E$ with $\abs{E}<+\infty$, then 
\[
  \Delta_{\vec E} = \frac{\Per(E)}{\abs{E}^{\frac{d-1} d}}
\]
is the usual \emph{isoperimetric ratio} of $E$.
\end{remark}

\begin{definition}[lens partition]
\label{def:lens-partition}
We say that a $3$-partition $\vec L = (L^1,L^2,L^3)$ of $\Omega\subset \RR^d$ is a lens partition in $\Omega$ if $L^1\subset \bar\Omega$ is a lens:
\[
 L^1 = B_R(p)\cap B_R(q), \qquad \text{with $p,q\in \RR^d$, $\abs{p-q}=R$}
\]
and $L^2 = H\setminus L^1$, $L^3=\RR^d\setminus (H \cup L^1)$
where $\partial H$ is the hyperplane equidistant from the points $p,q$:
\[
  H = \ENCLOSE{x\in \RR^d\colon \abs{x-p} \le \abs{x-q}}.
\]
\end{definition}

\begin{definition}[Simons' cone, barrel]
\label{def:simons-partition}
We call \emph{Simons' cone partition} the $3$-partition $\vec S=(S_1,S_2,S_3)$ of $\RR^8$
defined by
\begin{align*}
 S_1 &\defeq \emptyset,\\
 S_2 &\defeq \ENCLOSE{(\vec x,\vec y)\in \RR^4\times \RR^4\colon \abs{\vec x} \le \abs{\vec y}},\\
 S_3 &\defeq \RR^d\setminus S_2.
\end{align*}

We call \emph{barrel partition} the $3$-partition $\vec Q = (Q_1,Q_2,Q_3)$ of $\RR^8$
defined by 
\begin{align*}
  Q_1 &\defeq \BB^4\times\BB^4 =  \ENCLOSE{(\vec x, \vec y)\in \RR^4\times \RR^4\colon \abs{\vec x}\le 1, \abs{\vec y} \le 1},\\
  Q_2 &\defeq S_2 \setminus Q_1,\\
  Q_3 &\defeq S_3 \setminus Q_1.
\end{align*}
\end{definition}

\begin{remark}
Clearly $\vec m(\vec S) = (0, +\infty, +\infty)$
and it is well known that $\vec S$ is an isoperimetric partition since $S_2$ is a locally minimal set as proved 
in~\cite{BomDeGGiu69} (see also \cite{DePPao08}).
\end{remark}

\begin{lemma}[defects of lens and barrel partitions]
\label{lm:defect-partition}
Let $\vec L$ be a lens partition,
then
\begin{equation}
    \Delta_{\vec L} = 4 \sqrt[8]{4 \pi^3\Enclose{\frac {16} 9 - \frac{93}{35}\sqrt 3}}  \approx 7.29.
\end{equation}
Let $\vec Q$ be the barrel partition, then
%and $\vec C$ be the \emph{barrel} and \emph{Simons'} $3$-partitions of $\RR^8$.
%Then $\vec Q \triangle \vec C \subset \bar B_1$, $\abs{Q_1}>1$ and
\[
    \Delta_{\vec Q} = \sqrt[4]{8}\sqrt \pi \Enclose{4-\frac 8 7 \sqrt 2} \approx 7.10.
\]
\end{lemma}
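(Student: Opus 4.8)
The plan is to compute both defects directly from Definition~\ref{def:defect}, exploiting that for each of these partitions the blow-down is elementary: for a lens $\vec L$ the blow-down $\vec L_\infty$ is the flat hyperplane $\partial H$ (the lens $L^1$ being bounded and the two half-spaces $L^2,L^3$ scaling to the same half-spaces), while for the barrel $\vec Q$ the blow-down $\vec Q_\infty$ is exactly the Simons' cone partition $\vec S$ (since $Q_1=\BB^4\times\BB^4$ is bounded and $Q_2,Q_3$ differ from $S_2,S_3$ only by a bounded set). Thus in each case
\[
\Delta_{\vec E}=\lim_{r\to+\infty}\frac{\Per(\vec E,B_r)-\Per(\vec E_\infty,B_r)}{\abs{E_1}^{(d-1)/d}},
\]
and the numerator is a \emph{finite} quantity independent of $r$ once $B_r$ contains the bounded region $E_1$ — it is just the perimeter of $\partial E$ inside $B_r$ minus the perimeter of the flat object $\partial E_\infty$ inside $B_r$, both of which stabilize (for the lens) or cancel outside a fixed ball (for the barrel). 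So the computation reduces to: (a) find $\abs{E_1}$; (b) find the finite perimeter difference.

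\textbf{The lens.} First I would normalize: a lens $L^1=B_R(p)\cap B_R(q)$ with $\abs{p-q}=R$ is, up to rotation and translation, symmetric about the hyperplane $\partial H$, and each of the two spherical caps subtends a half-angle determined by $\abs{p-q}=R$, giving $\cos\theta = 1/2$, i.e.\ the caps meet $\partial H$ at $60^\circ$. I would compute $\abs{L^1}$ as twice the volume of a spherical cap of a ball of radius $R$ in $\RR^8$ with the appropriate height $R/2$, and compute the relevant perimeter contribution: $\partial\vec L$ inside a large ball $B_r$ consists of the spherical surface $\partial L^1$ (the two caps) plus the part of $\partial H$ outside $L^1$ (the interface between $L^2$ and $L^3$), while $\partial\vec L_\infty\cap B_r$ is all of $\partial H\cap B_r$. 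Hence the numerator equals $\H^{d-1}(\partial L^1) - \H^{d-1}(\partial H\cap L^1)$, i.e.\ (area of two caps) minus (area of the equatorial disc of radius $\sqrt3\,R/2$). Both are standard integrals: the cap area is $\int$ of $\omega_{d-2}(R\sin t)^{d-2}R\,dt$ over the appropriate angular range, the disc area is $\omega_{d-1}(\sqrt3\,R/2)^{d-1}$, with $d=8$. Then $R$ cancels when we divide by $\abs{L^1}^{7/8}$, leaving the stated constant $4\sqrt[8]{4\pi^3(16/9-\tfrac{93}{35}\sqrt3)}$; I would present these as a short sequence of displays, taking care with the normalizing constants $\omega_7,\omega_6$.

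\textbf{The barrel.} Here $Q_1=\BB^4\times\BB^4$ has $\abs{Q_1}=\omega_4^2=\pi^2/2\cdot\pi^2/2$... more precisely $\omega_4=\pi^2/2$ so $\abs{Q_1}=\pi^4/4$; actually I'd just write $\abs{Q_1}=\omega_4^2$ and compute. For the perimeter difference: $\partial\vec Q$ inside a large ball consists of $\partial Q_1 = (\SS^3\times\BB^4)\cup(\BB^4\times\SS^3)$ together with the part of the Simons' cone $\{\abs{\vec x}=\abs{\vec y}\}$ lying outside $Q_1$; while $\partial\vec Q_\infty = \partial\vec S$ is the whole Simons' cone. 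Outside $B_r$ (for $r$ large) the two agree, so the numerator is $\H^7(\partial Q_1) - \H^7(\{\abs{\vec x}=\abs{\vec y}\}\cap Q_1)$. By the rotational symmetry both terms are products: $\H^7(\partial Q_1)=2\cdot\H^3(\SS^3)\cdot\H^4(\BB^4)=2\cdot 2\pi^2\cdot\omega_4$ — wait, I'd write it carefully using $\H^3(\SS^3)$ and $\H^4(\BB^4)=\omega_4$; and $\H^7$ of the piece of the cone inside $Q_1$ is computed via the coarea/slicing in $(\abs{\vec x},\abs{\vec y})$ over the square $[0,1]^2$ intersected with the diagonal. After simplification this yields $\sqrt[4]{8}\sqrt\pi(4-\tfrac87\sqrt2)$ upon dividing by $\abs{Q_1}^{7/8}=\omega_4^{7/4}$.

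\textbf{Main obstacle.} The conceptual content is light; the real work — and the only place errors creep in — is the bookkeeping of the dimensional constants ($\omega_6,\omega_7,\omega_8$, the measure $\H^3(\SS^3)$, the half-angle of the lens) and verifying that the perimeter difference in the numerator is genuinely $r$-independent for $r$ large, i.e.\ that $\partial\vec E$ and $\partial\vec E_\infty$ coincide outside a fixed ball (immediate for the barrel since $Q_k\triangle S_k$ is bounded; for the lens one must note $\partial L^2\cup\partial L^3$ agrees with $\partial H$ outside $L^1$). I would therefore state explicitly the identity $\Per(\vec E,B_r)-\Per(\vec E_\infty,B_r)=\H^{d-1}(\partial\vec E\cap B_\rho)-\H^{d-1}(\partial\vec E_\infty\cap B_\rho)$ for all $r\ge\rho$ with $\rho$ fixed, and then just evaluate the two surface integrals. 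I expect the lens computation of the spherical-cap volume in $\RR^8$ (leading to the $16/9-\tfrac{93}{35}\sqrt3$ combination) to be the most calculation-heavy single step.
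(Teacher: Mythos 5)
Your plan is correct and is essentially the paper's own computation: reduce the defect to the $r$-independent quantity $\Per(\partial E_1)-\H^{d-1}(\partial\vec E_\infty\cap E_1)$ divided by $\abs{E_1}^{7/8}$, then evaluate the volume and the two surface areas (the paper normalizes the lens to $R=1$ and gets $\Per(L_1)$ by differentiating the scaling identity $2V(r)=r^8\abs{L_1}$ instead of integrating the cap area directly, but that is only a shortcut). One small caution for the lens: the slice of the cap at polar angle $t$ is a $6$-sphere of radius $R\sin t$, whose measure is $\H^{6}(\SS^{6})(R\sin t)^{6}=7\omega_7(R\sin t)^{6}$, not $\omega_{6}(R\sin t)^{6}$ as written in your integrand.
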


\begin{proof}
We have
\begin{align*}
 \omega_d \defeq \abs{\BB^d} 
 &=  \frac{\pi^{\frac d 2}}{\frac d 2!}
 = \begin{cases}
    \frac{\pi^k}{k!} & \text{if $d=2k$ is even},\smallskip\\
    \frac{\pi^k 2^k}{d!!} & \text{if $d=2k+1$ is odd},
 \end{cases}\\
 \H^d(\SS^d) &= (d+1)\omega_{d+1}
\end{align*}
whence
\[
\abs{\BB^4} = \frac{\pi^2}{2}, 
\qquad
\H^3(\SS^3) = 2 \pi^2,
\qquad
\abs{\BB^7} = \frac{16}{105} \pi^3,
\qquad
\H^6(\SS^6) = 7 \omega_7 = \frac{16}{15}\pi^3.
\]

Let $\vec S=(\emptyset, S_2, \RR^8\setminus S_2)$ be the Simons' cone partition and let $\vec Q = (Q_1,S_2\setminus Q_1,\RR^8\setminus(S_1\cup Q_1))$ be the \emph{barrel} partition (see Definition~\ref{def:simons-partition}). 
Then
\begin{align*}
\abs{Q_1} &= \abs{\BB^4}^2 = \omega_4^2 = \frac{\pi^4}{4},\\
\Per(Q_1) 
    &= 2\H^3(\SS^3)\H^4(\BB^4) 
    = 2\pi^4, \\
\Per(S_2,Q_1) 
    &= \H^3(\SS^3)^2\int_0^1 x^3x^3\sqrt 2 dx
    = \frac 4 7 \sqrt 2 \pi^4.
\end{align*}

Now consider the lens partition $\vec L=(L_1,H\setminus L_1,\RR^8 \setminus (H\cup L_1)$ with 
\begin{align*}
      H &= \ENCLOSE{x\in \RR^d\colon x_8\ge 0}\\
      L_1 &= (\BB^8 + \frac {e_8}{2})\cap (\BB^8 - \frac {e_8}{2})\\
\end{align*}
where $e_8=(0,\dots,0, 1)\in \RR^8$
(see Definition~\ref{def:lens-partition}).

One has
\begin{align*}
\abs{L_1} 
    &=  2 \int_{\frac 12}^1 \abs{\sqrt{1-y^2}\cdot \BB^7} \,dy
    =  2 \omega_7 \int_{\frac 12}^1 (1-y^2)^{\frac 7 2}\, dy\\
    &= 2 \omega_7 \int_0^{\frac \pi 3} \sin^8 \theta\, d\theta
    = \omega_7 \enclose{\frac{35}{192} \pi -\frac{279}{1024} \sqrt 3}\\
    &= \frac{\pi^4}{36} -\frac{93}{2240} \sqrt 3\pi^3,
\end{align*}
\[
  \Per(H,L_1) 
    = \abs{\frac{\sqrt 3} 2 \BB^7} 
    = \frac{27}{128}\sqrt 3\omega_7
    = \frac{9}{280}\sqrt 3\pi^3.
\]
If we let 
$V(r)=\abs{L_1\cap H}$,
on one hand we have 
\[
 V'(1) = \frac 1 2 \H^7(\partial L_1) - \frac 1 2 \H^7 \enclose{\frac{\sqrt 3}{2} \BB^7}
\]
and on the other hand we have
$2 V(r) = r^8 \abs{L_1}$
hence 
\[
  \Per(L_1) 
    = 8\abs{L_1} + \omega_7 \enclose{\frac{\sqrt 3} 2}^7
    = 8\abs{L_1} + \frac{16}{105}\cdot \frac{27}{128}\sqrt 3 \pi^3
    = \frac 2 9 \pi^4 - \frac 3{10} \sqrt 3 \pi^3
\]

In view of Remark~\ref{rem:defect}
we can finally compute and compare the defects:
\begin{align*}
    \Delta_{\vec Q} &=
    \frac{\Per(Q_1)-\Per(S_2,Q_1)}{\abs{Q_1}^{\frac 7 8}}
    = \frac
            {2\pi^4 - \frac 4 7 \sqrt 2 \pi^4}
            {\enclose{\frac{\pi^4}{4}}^{\frac 78}}
    \\
    &= \sqrt[4]{8} \sqrt{\pi}\Enclose{4 - \frac 8 7  \sqrt 2}  
    \approx 7.10, \\     
    \Delta_{\vec L} &=
    \frac{\Per(L_1)-\Per(H,L_1)}{\abs{L_1}^{\frac 7 8}}
    = \frac
        {\frac 2 9 \pi^4 - \frac{93}{280}\sqrt 3 \pi^3}
        {\enclose{{\frac{\pi^4}{36} -\frac{93}{2240} \sqrt 3\pi^3}}^{\frac 7 8}}
    \\
    &= 4\sqrt[8]{4\pi^3\Enclose{\frac{16}{9}\pi - \frac{93}{35} \sqrt 3} }
    \approx 7.29.
\end{align*} 
\end{proof}

\begin{lemma}\label{lemmamain}
Let $R_n>0$, $R_n\to+\infty$ be given.
Let $\vec S$ be the Simons' partition (see Definition~\ref{def:simons-partition}).
Let $\vec E_n=((E_n)_1,(E_n)_2,(E_n)_3)$ be a $3$-partition of $\RR^8$ such that $\vec E_n\setminus \bar B_{R_n} = \vec S\setminus \bar B_{R_n}$,
and $\abs{(E_n)_1}=1$.
Suppose  that for all $i\in \NN$, $i>0$,
there exist $\vec x_n^i\in \RR^d$, and $\vec F^i$, such that
\begin{gather*}
    \vec E_n -  \vec x_n^i \stackrel{L^1_\loc}\longrightarrow \vec F^i,\\
    \sum_{i=1}^{+\infty}\abs{F^i_1} 
    = 1,\\
    \lim_{n\to +\infty} \abs{ \vec x_n^i- \vec x_n^j}= +\infty \qquad\text{for all $i\neq j$.}    
\end{gather*}
Let $\Pi^i$ 
be the $L^1_\loc$-limit of the balls $B_{R_n}-\vec x_n^i$ as $n\to \infty$.
Clearly $\bar \Pi^i$ is either $\RR^8$ (if $R_n-\abs{\vec x_n^i}\to \infty$)
or a closed half-space.
Suppose
moreover that
there exist partitions 
$\vec F^i_\infty=(\emptyset, (F^i_\infty)_2, (F^i_\infty)_3)$,
$\vec T^i = (\emptyset, T^i_2, T^i_3)$, $\vec H^i = (\emptyset, H^i_2,H^i_3)$,  
with $\partial \vec H^i=\emptyset$ or $\partial \vec H^i$ is a hyperplane passing through the origin, so that
\begin{gather}
        \frac{\vec F^i}{r}  \stackrel{L^1_\loc}{\longrightarrow} \vec F^i_\infty \qquad \text{\rm as }\, r\to +\infty\\
        \vec S - \vec x_n^i \stackrel{L^1_\loc}\longrightarrow \vec T^i \quad \text{\rm as  }\, n\to +\infty\\
        (\vec F^i_\infty +\vec y^i) \cap \Pi^i = (\vec H^i+\vec y^i) \cap \Pi^i \\
        (\vec F^i_\infty +\vec y^i) \setminus \Pi^i = \vec T^i \setminus \Pi^i,
\end{gather}
for some $\vec y^i\in \RR^d$. 
Suppose also 
\begin{equation}
\Abs{\vec m ((\vec F^i\triangle (\vec H^i+\vec y^i))\cap (B_r\setminus B_{r-1})\cap \Pi^i)}_1 \to 0.
\end{equation}
Then
\[
\limsup_{n\to+\infty} \Delta_{\vec E_n} \ge \inf_i \Delta_{\vec F^i}^{\bar \Pi^i}.
\]
\end{lemma}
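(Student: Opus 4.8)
The plan is to unpack the two defect quantities in terms of perimeters on large balls and then pass to the limit using the concentration–compactness structure already set up, together with the semicontinuity inequality~\eqref{eq:semi1} and the monotonicity/blow-down machinery of Corollary~\ref{corinfty}. Concretely, fix a large radius $r$ and write
\[
\Per(\vec E_n, B_r) - \Per(\vec S, B_r)
= \sum_{i\in I}\Enclose{\Per(\vec E_n, B_r(\vec x_n^i)) - \Per(\vec S, B_r(\vec x_n^i))} + (\text{error}),
\]
where the balls $B_r(\vec x_n^i)$ are pairwise disjoint for $n$ large (by the divergence of $\abs{\vec x_n^i - \vec x_n^j}$) and the error term accounts for the perimeter of $\vec E_n = \vec S$ outside all these balls. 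Since $\vec E_n$ coincides with the cone $\vec S$ outside $\bar B_{R_n}$ and $\vec S$ is a cone, the quantity $\Per(\vec S, B_r) - \sum_i \Per(\vec S, B_r(\vec x_n^i))$ can be controlled; the point is that the left-hand side divided by $\abs{(E_n)_1}^{(d-1)/d} = 1$ is exactly $\Delta_{\vec E_n}$ (up to choosing the liminf in $r$).

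Next I would localize each summand. For a fixed $i$, using $\vec E_n - \vec x_n^i \to \vec F^i$ in $L^1_\loc$ and semicontinuity of perimeter~\eqref{eq:semi1}, one gets
\[
\liminf_{n\to\infty}\Enclose{\Per(\vec E_n, B_r(\vec x_n^i)) - \Per(\vec S, B_r(\vec x_n^i))}
\ge \Per(\vec F^i, B_r\cap\bar\Pi^i) - \Per(\vec T^i, B_r\cap\bar\Pi^i) + o_r(1),
\]
where I use that $\vec S - \vec x_n^i \to \vec T^i$ and that the contribution of $\vec S - \vec x_n^i$ on $B_r\setminus\bar\Pi^i$ matches that of $\vec F^i$ there (this is the boundary-matching hypothesis $(\vec F^i_\infty + \vec y^i)\setminus\Pi^i = \vec T^i\setminus\Pi^i$, together with $\vec E_n\setminus\bar B_{R_n} = \vec S\setminus\bar B_{R_n}$ forcing $\vec F^i\setminus\bar\Pi^i = \vec T^i\setminus\bar\Pi^i$). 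The hypothesis that the symmetric-difference mass of $\vec F^i$ and $\vec H^i + \vec y^i$ on annuli inside $\Pi^i$ tends to zero, combined with the blow-down convergence $\vec F^i/r \to \vec F^i_\infty$ and the identification $\vec F^i_\infty + \vec y^i$ agrees with $\vec H^i + \vec y^i$ on $\Pi^i$ and with $\vec T^i$ off $\Pi^i$, is what lets me recognize the right-hand side, after sending $r\to\infty$, as exactly $\abs{F^i_1}^{(d-1)/d}\cdot \Delta_{\vec F^i}^{\bar\Pi^i}$: indeed $\Per(\vec T^i, B_r\cap\bar\Pi^i)$ is the perimeter of the blow-down of $\vec F^i$ restricted to $\bar\Pi^i$, so the difference is precisely the numerator in Definition~\ref{def:defect}. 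Here Corollary~\ref{corinfty} guarantees the blow-down limit $\vec F^i_\infty$ exists and that $\Per(\vec F^i, B_r\cap\bar\Pi^i)/r^{d-1}$ has a limit, so the $\liminf_{r}$ in the defect is well-behaved.

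Finally I assemble: summing over $i\in I$, using $\sum_i \abs{F^i_1} = 1$ and the super-additivity of $t\mapsto t^{(d-1)/d}$ (so $\sum_i \abs{F^i_1}^{(d-1)/d} \ge \Enclose{\sum_i\abs{F^i_1}}^{(d-1)/d} = 1$), one obtains
\[
\limsup_{n\to\infty}\Delta_{\vec E_n}
\ge \sum_{i\in I}\abs{F^i_1}^{\frac{d-1}{d}}\,\Delta_{\vec F^i}^{\bar\Pi^i}
\ge \Enclose{\inf_i \Delta_{\vec F^i}^{\bar\Pi^i}}\sum_{i\in I}\abs{F^i_1}^{\frac{d-1}{d}}
\ge \inf_i\Delta_{\vec F^i}^{\bar\Pi^i},
\]
which is the claim. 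The main obstacle I anticipate is the bookkeeping of the error term — making rigorous that the perimeter of $\vec S$ (and hence of $\vec E_n$) lying outside all the balls $B_r(\vec x_n^i)$, after subtraction, does not destroy the inequality: one must use that $\vec S$ is a cone with $\Per(\vec S, B_\rho)/\rho^{d-1}$ constant, that $\vec E_n = \vec S$ far out, and carefully order the limits ($n\to\infty$ first along the concentration–compactness subsequence, then $r\to\infty$), exactly as in the proof of Theorem~\ref{th:teoclosure}. A secondary subtlety is handling infinitely many bubbles: one truncates to a finite $I$, derives the inequality with a finite sum, and then lets $I$ exhaust $\NN$, which is harmless since each term is nonnegative after the blow-down subtraction and $\sum_i\abs{F^i_1}$ converges.
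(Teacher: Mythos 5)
There is a genuine gap, and it sits exactly where you flag "the main obstacle I anticipate": your decomposition leaves an error term
\[
\Per\bigl(\vec E_n,\ \bar B_{R_n}\setminus \textstyle\bigcup_i B_r(\vec x_n^i)\bigr)\;-\;\Per\bigl(\vec S,\ \bar B_{R_n}\setminus \textstyle\bigcup_i B_r(\vec x_n^i)\bigr),
\]
and nothing you invoke controls its sign or size. The lemma makes \emph{no} minimality assumption on $\vec E_n$; on the intermediate region $\vec E_n$ agrees with $\vec S$ only on $\partial B_{R_n}$, not on the spheres $\partial B_r(\vec x_n^i)$, so the local minimality of the Simons cone cannot be applied region-by-region, and $\vec E_n$ may well have strictly less perimeter than $\vec S$ there (compensated inside the balls). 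The conical scaling of $\vec S$ and a careful ordering of limits do not repair this: the deficit need not vanish as $n\to\infty$ for fixed $r$, and it can be of the same order as the quantities you are trying to isolate. The paper avoids comparing $\vec E_n$ with $\vec S$ on the intermediate region altogether. It merges region $1$ into region $2$ (forming $\hat{\vec E}_n$, which only decreases perimeter and removes the volume constraint), surgically replaces $\hat{\vec E}_n$ inside each $B_\rho(\vec x_n^i)$ by the flat data $\vec G_n^i$ built from $\vec H^i+\vec y^i$ and $\vec S$, controls the interfaces created on the gluing spheres via Lemma~\ref{lm:glueing}, and then uses the \emph{global} minimality of the Simons cone against this competitor, i.e.\ $\Per(\vec S,\bar B_{R_n})\le\Per(\tilde{\vec E}_n,\bar B_{R_n})$ as in \eqref{equno}--\eqref{eqdue}. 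Rearranging that single inequality is what produces the lower bound $\Per(\vec E_n,\bar B_{R_n})-\Per(\vec S,\bar B_{R_n})\ge\sum_i[\Per(\vec F^i,B_\rho\cap\bar\Pi^i)-\Per(\vec H^i+\vec y^i,B_\rho\cap\bar\Pi^i)]-C\eps$, with the intermediate region cancelling identically. This competitor construction is the essential idea missing from your proposal.

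Two smaller inaccuracies: the subtracted term in the defect $\Delta_{\vec F^i}^{\bar\Pi^i}$ is the perimeter of the blow-down $\vec F^i_\infty$, which on $\bar\Pi^i$ equals the half-hyperplane $\vec H^i+\vec y^i$, not $\vec T^i$ (these differ when $\Pi^i$ is a half-space); and your localized inequality needs actual convergence of $\Per(\vec S-\vec x_n^i,\cdot)$ (an upper bound on the subtracted term), which semicontinuity alone does not give — the paper sidesteps this by estimating $\Per(\vec G_n^i,\cdot)$ from above against the hyperplane $\vec H^i+\vec y^i$, for which the upper bound \eqref{eqg} can be verified directly. Your final assembly (superadditivity of $t\mapsto t^{7/8}$ and $\sum_i\abs{F_1^i}=1$) does match the paper's concluding step.
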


\begin{figure}
\begin{center}
\includegraphics[height=4cm]{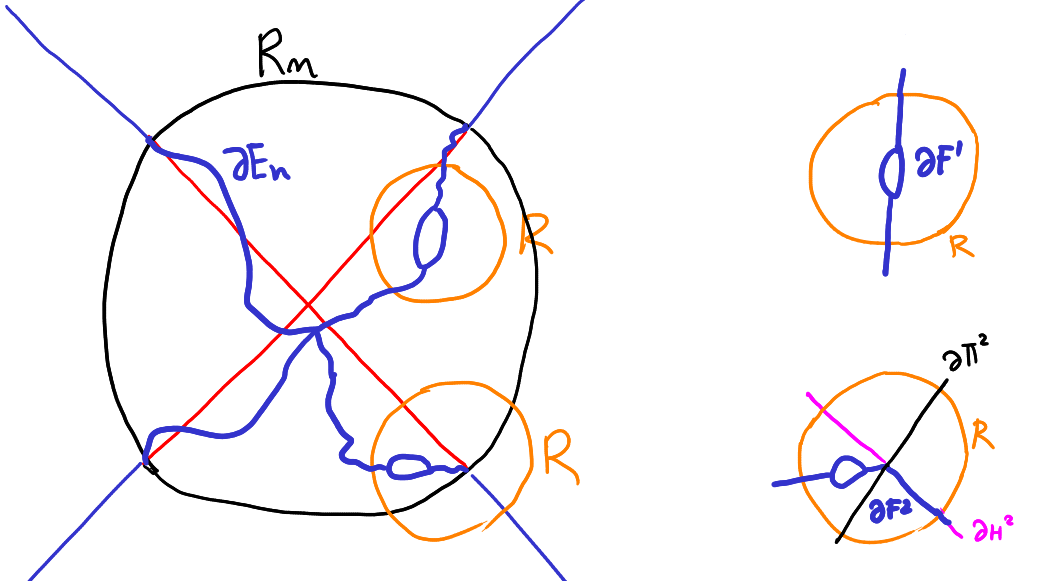}
\caption{A reference picture for the proof of Lemma~\ref{lemmamain}.}
\end{center}
\end{figure}

\begin{proof}
% By Lemma~\ref{lm:73098} we have, as $\rho\to +\infty$,
% \begin{equation}\label{eq:59232}
%    \H^{d-1}(\partial \vec H^i \cap B_\rho \cap (B_{R_n}(-\vec x_n^i) \setminus \Pi^i)) \le o(1).
%\end{equation}

Fix $\eps\in(0,1)$. 
Take $N$ large enough such that
\begin{equation}\label{eq:39564}
 \sum_{i=N+1}^{+\infty} \abs{F^i_1} \le \sum_{i=N+1}^{+\infty} \abs{F^i_1}^{\frac{7}{8}} < \eps.
\end{equation}
Take also $r>1$ large enough so that
such that $\sum_{i=1}^N \abs{F^i_1\cap B_{r-1}} > 1 - 2\eps$ and
\begin{gather}
    \sum_{i=1}^N \Abs{\vec m ((\vec F^i\triangle (\vec H^i+\vec y^i))\cap (B_r\setminus B_{r-1})\cap \Pi^i)}_1 < \eps, 
        \nonumber\\
    \sum_{i=1}^N \left(\Per(\vec F^i, B_\rho\cap\bar\Pi^i) - \Per(\vec H^i+\vec y^i,B_\rho\cap\bar  \Pi^i)\right) \ge \left(\sum_{i=1}^N \Delta_{\vec F^i}^{\bar \Pi^i} \cdot \abs{F^i_1}^{\frac{7}{8}}\right) - \eps ,
    \label{eq:93566}
\end{gather}
for all $\rho \in [r-1,r]$.

Fix $n$ large enough so that the balls $B_{r}(\vec x_n^i)$ are pairwise disjoint, and 
\begin{gather*}
 \sum_{i=1}^N\Abs{\vec m(((\vec E_n- \vec x_n^i) \triangle \vec F^i) \cap B_r)}_1 < \eps,\\
 \sum_{i=1}^N\abs{((B_{R_n} - \vec x_n^i) \triangle \Pi^i)\cap B_r}<\eps \\ %\qquad \text{for all 1\le i\le N},\\
 \Per(\vec H^i+\vec y^i, B_r \cap \bar B_{R_n}(- \vec x^i_n)\setminus \bar \Pi^i) < \frac \eps N,
\end{gather*}
so that 
\begin{equation}\label{eqzero}
 \sum_{i=1}^N\Abs{\vec m((\vec E_n- \vec x_n^i) \triangle (\vec H^i+\vec y^i))\cap (B_r\setminus B_{r-1})\cap (B_{R_n}- \vec x_n^i)}_1 < 3\eps.
\end{equation}
As a consequence, for $\rho \in [r-1, r]$
\begin{align}
    \Per(\vec H^i+\vec y^i, B_\rho\cap \bar B_{R_n}(- \vec x_n^i)) 
    %& \le 
    %\Per(\vec H^i+\vec y^i, B_r\cap B_{R_n}(- \vec x_n^i)) \nonumber\\
    & \le  
    \Per(\vec H^i+\vec y^i, B_\rho\cap \bar\Pi^i) \\
    &+ \Per(\vec H^i+\vec y^i, B_\rho \cap \bar B_{R_n}(- \vec x^i_n)\setminus \bar \Pi^i)\nonumber\\
    %\H^{7}(\partial \vec H^i\cap B_r\cap B_{R_n}(-\vec x_n^i)) \nonumber\\
    & \le 
    \Per(\vec H^i+\vec y^i, B_\rho \cap \bar\Pi^i) + \frac \eps N.
    %& \le 
    %\Per(\vec H^i, B_\rho \cap \bar\Pi^i) 
    %+ (\omega_{7}+1)\sqrt{\eps}. 
    \label{eq:37612}
\end{align}
By the semicontinuity of perimeter and the convergence of $B_{R_n}(-\vec x_n^i)$ to $\Pi^i$,
%Lemma~\ref{lm:73098}, 
we can also suppose that 
\begin{equation}\label{eqquattro}
 \Per(\vec F^i, B_r\cap \bar \Pi^i)
 \le
 \Per(\vec E_n, B_r( \vec x_n^i)\cap \bar B_{R_n}) + \frac \eps N,
\end{equation}
indeed, for $\delta>0$ we have
\begin{align*}
 \Per(\vec F^i, B_r\cap \bar \Pi^i)
 &\le
\liminf_n \Per(\vec E_n-\vec x_n^i, B_r\cap B_{R_n+\delta}(-\vec x_n^i))\\
&= \liminf_n \left[\Per(\vec E_n, B_r( \vec x_n^i)\cap \bar B_{R_n})\right.\\
&+ \left.\Per(\vec S, B_r( \vec x_n^i)\cap (B_{R_n+\delta}\setminus \bar B_{R_n}))\right]
\\
&\le \liminf_n \Per(\vec E_n, B_r( \vec x_n^i)\cap \bar B_{R_n}) + C \delta r^{d-1},
\end{align*}
which proves \eqref{eqquattro} by choosing $\delta$ such that $C \delta r^{d-1} < \eps/(2N)$ and $n$ large enough.

Let now
\begin{equation}\label{eq:44339}
    \vec G_n^i \defeq ((\vec x_n^i + \vec y^i + \vec H^i) \cap B_{R_n}) \cup (\vec S\setminus B_{R_n}).
\end{equation}
%If $\partial H^i$ is
%since, being  $\H^{d-1}$-null all the intersections of the essential boundaries,
%$\partial \vec  G^i\sim_{\H^{d-1}} \enclose{\enclose{\partial B_{R_n}\cap ({\vec H}+\vec x^i_n)\triangle {\vec S}}} \cup 
%\enclose{\partial \vec S \setminus \bar B_{R_n}} \cup \enclose{(\partial \vec H^i +\vec x^i_n)\cap B_{R_n}}$, and
Notice that
\[
\vec G^i_n - \vec x^i_n \rightarrow ((\vec y^i+\vec H^i) \cap \Pi^i) \cup (\vec T^i \setminus \Pi^i) = \vec y^i + \vec F^i_\infty,
\]
where the limit is locally in the Hausdorff distance. Moreover, 
%since 
%\begin{equation*}
%    \Per ( \vec G^i_n -\vec x^i_n,B_\rho\cap\partial B_{R_n}(-\vec x_n^i)) = \H^{d-1}\enclose{B_\rho\cap (\vec H^i + \vec y^i)\triangle (\vec S  -\vec x^i_n) \cap\partial B_{R_n}(-\vec x_n^i)}
%\end{equation*}
we have
\begin{equation}\label{eqg}
\limsup_n \Per(\vec G_n^i-\vec x^i,B_\rho\cap\bar B_{R_n}(-\vec x^i))\le \Per(\vec H^i+\vec y^i, B_\rho \cap \bar\Pi^i).
\end{equation}
Indeed, recalling from \eqref{eq:37612} for $\delta>0$ we have
\begin{align*}
\limsup_n \Per(\vec G_n^i-\vec x_n^i,B_\rho\cap\bar B_{R_n}(-\vec x_n^i))\le& 
\limsup_n \Per(\vec G_n^i-\vec x^i,B_\rho\cap B_{R_n+\delta}(-\vec x_n^i))\\ 
\le& 
\Per(\vec H^i+\vec y^i, B_\rho \cap (\Pi^i)_{2\delta}))\\
&+\Per(\vec T^i, B_\rho \cap ((\Pi^i)_{2\delta})\setminus \bar\Pi^i)),
\end{align*}
which gives \eqref{eqg} letting $\delta\to 0$. Hence, from \eqref{eqg},  for $n$ large enough it holds good 
%for $\delta$ such that $\Per(\vec T^i, B_\rho \cap ((\Pi^i)_{2\delta})\setminus \bar\Pi^i))\le \frac\eps N$, and  for $n$ large enough we can also assume that
\begin{equation}\label{eq:ahi}
   \sum_{i=1}^N\Per(\vec G_n^i-\vec x^i,B_\rho\cap\bar B_{R_n}(-\vec x^i))\le \sum_{i=1}^N \Per(\vec H^i+\vec y^i, B_\rho \cap \bar\Pi^i)+\eps.
\end{equation}

%Since $n$ is fixed, from now on
%we drop the dependency on $n$ in the notation:
%we let $\vec E  \defeq \vec E_n$, $\vec G^i\defeq \vec G^i_n$ and $\vec x^i \defeq \vec x_n^i$.

Let 
\[
  \hat {\vec E}_n  = (\emptyset, (E_n)_1 \cup (E_n)_2, (E_n)_3).
\]
Clearly for all Borel $B\subset \RR^d$ one has
\begin{equation}\label{eq:7498}
 \Per(\hat {\vec E}_n, B) \le \Per (\vec E_n,B).
\end{equation}
By construction we have
\begin{gather}
 \Abs{\vec m(\hat{\vec E}_n\triangle \vec G^i_n)\setminus B_{R_n} }_1 = 0 \\
 \sum_{i=1}^N \Abs{\vec m(\hat {\vec E}_n\triangle \vec E_n) \setminus B_{r-1}(\vec x^i_n)}_1 < 6\eps \\
 \sum_{i=1}^N \Abs{\vec m (\vec E_n \triangle (\vec x^i_n + \vec y^i+\vec H^i) \cap (B_r(\vec x^i_n))\setminus B_{r-1}(\vec x^i_n)\cap B_{R_n}}_1 < 3 \eps \\
 \Abs{\vec m(((\vec x^i_n+\vec y^i+\vec H^i)\triangle \vec G^i_n) \cap B_{R_n})}_1 = 0,
\end{gather}
hence
\begin{equation}\label{eq:8449}
 \sum_{i=1}^N \Abs{\vec m((\hat{\vec E}_n \triangle \vec G^i_n) \cap (B_r(\vec x^i_n)\setminus B_{r-1}(\vec x^i_n)))}_1 < 9\eps.
\end{equation}

For $\rho\in (r-1,r)$ we define
% $\Omega_n^i \defeq B_{\rho_n}(x_n^i)\cap B_n$,
\begin{equation}
    \tilde {\vec E}_n \defeq 
  (\hat{\vec E}_n \setminus \bigcup_{i=1}^N B_\rho(\vec x^i_n)) \cup \bigcup_{i=1}^N (\vec G^i_n \cap B_\rho(\vec x^i_n)).
\label{eq:5592}
\end{equation}
By Lemma~\ref{lm:glueing}, thanks to~\eqref{eq:8449}, we can appropriately choose $\rho$, so that
\begin{equation}\label{eqtre}
 \sum_{i=1}^N \Per(\tilde {\vec E}, \partial B_{\rho}(\vec x^i_n)) < 5\eps.
 %\frac{4\eps}{2\sqrt \eps} = 2\sqrt \eps.
\end{equation}
Let $\vec M_n$ be a partition 
minimizing the perimeter in the family of partitions $\vec F=(\emptyset,F_2,F_3)$ which coincide with $\hat {\vec E}_n$ on 
$\RR^d\setminus (B_{R_n}\cap \bigcup_{i=1}^N B_r(\vec x^i_n))$,
so that $\vec M_n \triangle \hat {\vec E}_n \subset \bar B_{R_n}$.
%even if $B_r(\vec x^i)$ are not necessarily contained in $B_{R_n}$.
It follows that
\begin{equation}\label{equno}
  \Per (\vec M_n, \bar B_{R_n})\ge \Per(\vec S, \bar B_{R_n}).
\end{equation}
Notice that $\tilde {\vec E}_n$ coincides with $\hat {\vec E}_n$ outside the intersections $ B_{R_n}\cap B_R(\vec x^i_n)$,
hence, by minimality of $\vec M_n$, we have 
\begin{equation}\label{eqdue}
\Per(\tilde{\vec E}_n, \bar B_{R_n}) \ge \Per(\vec M_n, \bar B_{R_n}).
\end{equation}
It follows that
\begin{align*}
\Per(\vec S, \bar B_{R_n})
&\le \Per(\tilde {\vec E}_n, \bar B_{R_n})  \qquad \text{[by \eqref{equno} and~\eqref{eqdue}]}\\
&= \Per(\tilde {\vec E}_n, B_{R_n}) + \Per(\tilde {\vec E}_n, \partial B_{R_n})
\\
&= \Per(\hat {\vec E}_n, B_{R_n}\setminus\cup_{i=1}^N \bar B_{\rho}(\vec x^i_n)) 
+ \Per(\tilde{\vec E}_n, \partial B_{R_n})
\\
&\quad + \sum_{i=1}^N \Enclose{\Per(\vec G^i_n ,B_\rho(\vec x^i_n)\cap B_{R_n}) 
+ \Per(\tilde{\vec E}_n,(\partial B_{\rho}(\vec x^i_n))\cap B_{R_n})} \qquad\text{[by~\eqref{eq:5592}]}
\\
&\le \Per(\hat {\vec E}_n, B_{R_n}\setminus\cup_{i=1}^N \bar B_{\rho}(\vec x^i_n)) 
+ \Per( \hat{\vec E}_n, \partial B_{R_n})
\\
&\quad + \sum_{i=1}^N \Enclose{\Per(\vec G^i_n ,B_\rho(\vec x^i_n)\cap B_{R_n}) 
+ \Per(\tilde{\vec E}_n,\partial (B_{\rho}(\vec x^i_n)\cap B_{R_n}))}
%+\Per(\tilde{\vec E}_n,B_{\rho}(\vec x^i_n)\cap \partial B_{R_n})} 
\\
&\le \Per(\hat {\vec E}, B_{R_n}\setminus\cup_{i=1}^N \bar B_{\rho}(\vec x^i_n)) 
+ \Per( \hat{\vec E}_n, \partial B_{R_n})
\\
&\quad + \sum_{i=1}^N \Enclose{\Per(\vec G^i_n ,B_\rho(\vec x^i_n)\cap B_{R_n}) 
+\Per(\vec G^i_n,B_{\rho}(\vec x^i_n)\cap \partial B_{R_n})} + 5\eps \qquad\text{[by~ \eqref{eqtre}]}
\\
&\le \Per(\vec E_n, B_{R_n}\setminus\cup_{i=1}^N \bar B_{\rho}(\vec x^i_n)) 
+ \Per( \vec E_n, \partial B_{R_n})
\\
&\quad + \sum_{i=1}^N \Per(\vec G^i_n ,B_\rho(\vec x^i_n)\cap \bar B_{R_n})
+ 5 \eps 
\qquad\text{[by~\eqref{eq:7498}, \eqref{eq:44339} ]}
\\
&\le \Per(\vec E_n, \bar B_{R_n}) \\
&\quad + \sum_{i=1}^N \Enclose{\Per(\vec G^i_n ,B_\rho(\vec x^i_n)\cap \bar B_{R_n}) - \Per(\vec E_n, B_\rho(\vec x^i_n)\cap \bar B_{R_n})} + 6\eps\qquad \text{[by \eqref{eq:ahi}]} \\
&\le \Per(\vec E_n,\bar B_{R_n}) \qquad \text{[by \eqref{eqquattro}]} \\
&\quad + \sum_{i=1}^N \Enclose{\Per(\vec G^i_n ,B_\rho(\vec x^i_n)\cap \bar B_{R_n}) -\Per(\vec F^i, B_\rho\cap \bar\Pi^i)}+
6 \eps. 
\end{align*}
Hence we get
\begin{align*}
\Delta_{\vec E_n} &= \lefteqn{\Per(\vec E_n,\bar B_{R_n}) - \Per(\vec S,\bar B_{R_n})} \\
&\ge 
\sum_{i=1}^N \Enclose{\Per(\vec F^i, B_{\rho}\cap \bar\Pi^i)
 - \Per(\vec G^i_n, B_\rho(\vec x^i_n)\cap \bar B_{R_n})} 
 -6\eps\\
 &\ge \sum_{i=1}^N \Enclose{
 \Per(\vec F^i, B_{\rho}\cap \bar\Pi^i)
 -\Per(\vec H^i + \vec y^i, B_{\rho}\cap \bar \Pi^i)} -7\eps
    \qquad\text{[by~\eqref{eq:ahi}]}\\
 &\ge \sum_{i=1}^N \Delta_{\vec F^i}^{\bar \Pi^i} \cdot \abs{F^i_1}^{\frac{7}{8}} - 8\eps
 \qquad\text{[by~\eqref{eq:93566}]}\\
 &\ge \left(\inf_{i\in \{1,\ldots N\}} \Delta_{\vec F^i}^{\bar \Pi^i}\right) \left(\sum_{i=1}^{+\infty} \abs{F^i_1}^{\frac{7}{8}} -\eps\right) - 8\eps
 \qquad\text{[by~\eqref{eq:39564}]}\\
 &\ge \inf_i \Delta_{\vec F^i}^{\bar \Pi^i} (1-\eps) - 8\eps,
\end{align*}
which gives the thesis letting $\eps\to 0$.
\end{proof}

In the following theorem we show that, when $\Pi$ is a half-space, the blow-down partition in Corollary \ref{corinfty} 
is necessarily flat in $\Pi$.

\begin{theorem}[half-space]
\label{th:piano}
Let $E\subset \RR^d$ be a locally minimal set in $\RR^d$ and suppose there exists a half-space $\Pi\subset \RR^d$ 
such that $\abs{E\setminus \Pi}=0$.
Then $\partial E$ is a hyper-plane.
\end{theorem}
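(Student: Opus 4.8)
The plan is to pass to a blow-down, identify the blow-down cone as the half-space itself, and then upgrade asymptotic flatness to global flatness by a Bernstein-type argument.

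\textbf{Step 1: blow-down.} After a rotation I would assume $\Pi=\{x_d>0\}$, so that $\abs{E\setminus\{x_d\ge 0\}}=0$; one may also assume $E$ and $\RR^d\setminus E$ both have positive measure, since otherwise there is nothing to prove. Viewing $E$ as the locally minimal $2$-partition $(E,\RR^d\setminus E)$ and applying Theorem~\ref{th:blow-down}, along a subsequence $r_k\to+\infty$ one gets $E/r_k\to C$ in $L^1_{\loc}$, with $(C,\RR^d\setminus C)$ a locally minimal \emph{conical} partition. Since $\{x_d\ge 0\}$ is a cone and $E\subseteq\{x_d\ge 0\}$ we have $C\subseteq\{x_d\ge 0\}$; since $E\neq\emptyset$, the density lower bound together with the monotonicity formula gives $\Per(C,B_1)=\omega_{d-1}\Theta_\infty(E)\ge\omega_{d-1}$, so $C$ is a nontrivial cone and $0\in\partial C$.

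\textbf{Step 2: the blow-down cone is the half-space.} Let $\Sigma\defeq\partial C\cap\partial B_1$, a (possibly singular) minimizing hypersurface of $\SS^{d-1}$ contained in the closed hemisphere $\{x_d\ge 0\}\cap\SS^{d-1}$, whose singular set has codimension at least $7$ in $\Sigma$ and therefore zero capacity. I would test with $u\defeq x_d|_\Sigma\ge 0$: since $\Sigma$ is minimal in $\SS^{d-1}$, its mean curvature vector in $\RR^d$ at a point $p$ equals $-(d-2)p$, whence $\Delta_\Sigma u=-(d-2)u$ on the regular part of $\Sigma$. Integrating this identity over the closed hypersurface $\Sigma$ (the cut-off across the singular set being harmless since $u$ is bounded) yields $(d-2)\int_\Sigma u\, d\H^{d-2}=0$, hence $u\equiv 0$, i.e.\ $\Sigma\subseteq\{x_d=0\}$. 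Therefore $\partial C\subseteq\{x_d=0\}$, and since $C$ is a nontrivial cone contained in $\{x_d\ge 0\}$ one concludes $C=\{x_d>0\}=\Pi$ up to a negligible set. In particular the blow-down of $E$ is unique: $E/r\to\Pi$ as $r\to+\infty$, and $\Theta_\infty(E)=1$.

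\textbf{Step 3: global flatness.} The hypotheses of Theorem~\ref{teoallard} are now in force: $\mathbf v(\partial^* E\setminus\bar B_1,1)$ is stationary, $\Per(E,B_\rho\setminus\bar B_1)/(\omega_{d-1}\rho^{d-1})\to\Theta_\infty(E)=1$, and some blow-down of $E$ is the half-space $\Pi$. Hence, up to a negligible set, $E=\{(\bar x,x_d)\in\RR^{d-1}\times\RR\colon x_d>f(\bar x)\}$ for some $f\colon\RR^{d-1}\to\RR$ solving the minimal surface equation on all of $\RR^{d-1}$ (with the expansion~\eqref{expansion} when $d\ge 4$). The inclusion $E\subseteq\{x_d\ge 0\}$ forces $f\ge 0$ everywhere. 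Applying the interior gradient estimate for nonnegative solutions of the minimal surface equation on balls $B_R(\bar x_0)$ and letting $R\to+\infty$, one gets that $\abs{\nabla f}$ is bounded on $\RR^{d-1}$; by Moser's version of the Bernstein theorem $f$ is affine, and an affine nonnegative function on $\RR^{d-1}$ is constant. Thus $\partial E$ is a hyperplane.

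\textbf{Expected main obstacle.} The crucial point is Step~2 — excluding a singular minimizing cone contained in a half-space. The spherical-eigenfunction computation disposes of it in one line, the only care needed being the integration by parts across the (codimension $\ge 7$) singular set of $\Sigma$, legitimate by a standard cut-off/capacity argument. The remaining ingredients — the blow-down and monotonicity machinery of Section~\ref{secmono}, Theorem~\ref{teoallard}, the interior gradient estimate, and Moser's theorem — are classical.
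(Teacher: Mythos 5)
Your Steps 1 and 2 are correct, and Step 2 takes a genuinely different route from the paper at the one point where a real idea is needed: the paper identifies the blow-down with $\Pi$ by observing that $\partial E_\infty$ and $\partial\Pi$ are minimizing hypersurfaces, one lying on one side of the other and touching at $0$, and then invoking Simon's strong maximum principle \cite[Corollary 1]{Sim87}; you instead run the classical spherical-eigenfunction argument ($\Delta_\Sigma u=-(d-2)u$ on the link, $u\ge 0$, $\int_\Sigma u=0$) to show that a minimizing cone in a closed half-space has its link in the equator. Both work; yours is self-contained modulo the (standard, and correctly flagged) cut-off across the codimension-$\ge 7$ singular set, while the paper's is a one-line citation. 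The paper's written proof essentially ends once $\partial E_\infty=\partial\Pi$ is established, leaving the passage from the blow-down back to $E$ implicit; you attempt to make that passage explicit, which is where the problem arises.

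The gap is in Step 3. Theorem~\ref{teoallard} does \emph{not} produce an entire graph: its conclusion is a function $f\colon\RR^{d-1}\setminus\bar B_1\to\RR$, i.e.\ $\partial E$ is graphical only outside a compact set (the hypotheses themselves only concern $E\setminus\bar B_1$), and the expansion~\eqref{expansion} moreover requires $d\ge 4$. So you are not entitled to ``$f$ solving the minimal surface equation on all of $\RR^{d-1}$'', and neither the interior gradient estimate on balls $B_R(\bar x_0)$ with $R\to+\infty$ nor Moser's Bernstein theorem applies: both need an entire solution, and nothing in your argument controls $\partial E$ inside the compact core. The standard repair — which bypasses Theorem~\ref{teoallard}, the Bernstein step, and the restriction $d\ge4$ altogether — is monotonicity rigidity: Step 2 gives a multiplicity-one planar blow-down, hence $\lim_{\rho\to+\infty}\Per(E,B_\rho(x))/(\omega_{d-1}\rho^{d-1})=1$, while the density lower bound at any $x$ in the support of $\Per(E,\cdot)$ gives $\Per(E,B_\rho(x))/(\omega_{d-1}\rho^{d-1})\ge 1$ as $\rho\to 0^+$; monotonicity then forces equality at every point and every scale, so $E$ is a cone about each of its boundary points and therefore a half-space. (Alternatively one can use~\eqref{expansion} to locate a touching point of $\partial E$ with $\{x_d=t^*\}$, $t^*\defeq\inf_{\partial E}x_d$, and apply \cite[Corollary 1]{Sim87} directly to $E$, but the case in which the infimum is attained only at infinity still needs separate treatment, so the monotonicity route is cleaner.)
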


\begin{proof}

Let $E_\infty$ be a blow down of $E$ as given by Corollary~\ref{corinfty}. 
Clearly $\abs{E_\infty\setminus \Pi} = 0$ and $0\in \partial E_\infty \cap \partial \Pi$.
But $\Pi$ itself is minimal, hence we can apply the strong maximum principle (see \cite[Corollary 1]{Sim87})
to state that $\partial E_\infty = \partial \Pi$.
\end{proof}

\begin{theorem}\label{teoflat}
%Let $\vec F$, $\Pi$ and $\vec F_\infty$ be as in Corollary \ref{corinfty}. 

Let $\vec F= (F_1,F_2,F_3)$ be a $3$-partition of $\RR^d$ with $\abs{F_1}<+\infty$.

Assume that $\Pi$ is a half-space with $0\in \partial \Pi$
and suppose that each region of $\vec F \setminus \bar \Pi$ is a cone with vertex in $0$.
In particular $\abs{F_1\setminus \Pi}=0$.

Suppose that $\vec F$ is isoperimetric in $\bar \Pi$. 
%Suppose there exists $R>0$ be such that $F^k\subset B_R$ for all $k$ with $\abs{F^k}<+\infty$.
Let $\vec F_\infty$ be a blow-down of $\vec F$, as in Corollary~\ref{corinfty}.

\begin{itemize}
    \item[(i)] If $\abs{F_2\setminus \Pi}=0$ or $\abs{F_3\setminus \Pi}=0$ then 
    either $\partial \vec F_\infty=\emptyset$ or $\partial \vec F_\infty = \partial \Pi$.
    \item[(ii)] If 
    $\vec F \setminus \bar \Pi = (\emptyset, H\setminus\bar\Pi, (\RR^d\setminus H)\setminus \bar \Pi)$,
    where $H$ is a half-space not parallel to $\partial \Pi$ with $0\in \partial H$
    then $\partial \vec F_\infty \cap \bar \Pi$ is a half-hyperplane.
    %(***servirebbe dire che è sempre lo stesso iperpiano indipendentemente dalla successione***)
    \end{itemize}
\end{theorem}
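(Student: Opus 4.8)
The plan is to reduce both statements to the strong maximum principle for minimal sets (as in Theorem~\ref{th:piano}), after extracting the correct geometric information about the blow-down $\vec F_\infty$. First I would note that, by Corollary~\ref{corinfty}, $\vec F_\infty$ is a conical partition that is locally minimal in $\bar\Pi$, and that since $\abs{F_1}<+\infty$ the finite region disappears in the blow-down, so $\vec F_\infty=(\emptyset,(F_\infty)_2,(F_\infty)_3)$ is in fact a $2$-partition, i.e.\ its interface is a locally minimal cone $\Sigma=\partial\vec F_\infty$ with vertex at $0$, and $0\in\Sigma$. Moreover, because the regions of $\vec F\setminus\bar\Pi$ are already cones with vertex $0$, the blow-down does not change $\vec F$ outside $\bar\Pi$, so $\vec F_\infty\setminus\bar\Pi=\vec F\setminus\bar\Pi$; this will pin down the behaviour of $\Sigma$ in the complement of the half-space.

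For part (i), suppose $\abs{F_2\setminus\Pi}=0$ (the other case is symmetric). Then $\abs{(F_\infty)_2\setminus\Pi}=0$ as well, so the locally minimal set $(F_\infty)_2$ is contained in the half-space $\Pi$. Now I would invoke exactly the argument of Theorem~\ref{th:piano}: $(F_\infty)_2$ is locally minimal, it lies in the minimal half-space $\Pi$, and $0\in\partial(F_\infty)_2\cap\partial\Pi$ (this intersection being nonempty because $\Sigma$ passes through the origin and, being a cone contained in $\bar\Pi$, must touch $\partial\Pi$); the strong maximum principle \cite[Corollary~1]{Sim87} then forces $\partial(F_\infty)_2=\partial\Pi$ unless $(F_\infty)_2$ is empty (or all of $\Pi$), which translates into $\partial\vec F_\infty=\emptyset$ or $\partial\vec F_\infty=\partial\Pi$.

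For part (ii), the boundary datum is $\vec F\setminus\bar\Pi=(\emptyset,H\setminus\bar\Pi,(\RR^d\setminus H)\setminus\bar\Pi)$ with $H$ a half-space through $0$ not parallel to $\partial\Pi$. Since $\vec F_\infty$ agrees with this outside $\bar\Pi$, the minimal cone $\Sigma=\partial\vec F_\infty$ coincides with the hyperplane $\partial H$ on $\RR^d\setminus\bar\Pi$; in particular $\Sigma\setminus\bar\Pi=\partial H\setminus\bar\Pi$, which, because $\partial H$ is not parallel to $\partial\Pi$, is a (relatively open) half of the hyperplane $\partial H$, and its boundary is the $(d-2)$-plane $\partial H\cap\partial\Pi$. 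The task is to show $\Sigma\cap\bar\Pi$ is the complementary half of $\partial H$, i.e.\ that $\Sigma$ is exactly the hyperplane $\partial H$. I would argue as follows: $\Sigma\cap\Pi$ is a minimal cone in the open half-space $\Pi$ whose boundary trace on $\partial\Pi$ and whose prescribed ``outer'' boundary on $\partial\Pi$ is the $(d-2)$-plane $\partial H\cap\partial\Pi$; the hyperplane $\partial H$ restricted to $\bar\Pi$ is a minimal competitor with the same boundary, so it suffices to compare $\Sigma$ with $\partial H$ via the maximum principle. Concretely, translate a copy of the minimal half-space $\partial H$ (pushed slightly into $\Pi$ on either side) and slide it until first contact with $\Sigma$; since both are minimal and the contact point is interior to $\Pi$ (contact on $\partial\Pi$ being prevented by the fixed boundary data $\partial H\cap\partial\Pi$), the strong maximum principle \cite[Corollary~1]{Sim87} yields that $\Sigma$ coincides with that translate, and letting the translation go to zero gives $\Sigma\cap\bar\Pi=\partial H\cap\bar\Pi$, a half-hyperplane.

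The main obstacle I anticipate is step (ii): one must make sure the sliding/maximum-principle argument respects the fixed boundary condition on $\partial\Pi$ and that no first contact can occur on $\partial\Pi$ itself. This requires using the non-parallelism of $H$ and $\partial\Pi$ (so that $\partial H$ meets $\partial\Pi$ transversally in a genuine $(d-2)$-plane that the competitors share), together with the fact that $\Sigma$ is a cone with vertex $0\in\partial H\cap\partial\Pi$, which forces its boundary trace on $\partial\Pi$ to be precisely $\partial H\cap\partial\Pi$; controlling the behaviour near this ``corner'' $\partial H\cap\partial\Pi$, where $\Sigma$ meets $\partial\Pi$, is the delicate point, and one likely needs a version of the boundary maximum principle or a reflection argument there rather than the purely interior statement of \cite[Corollary~1]{Sim87}.
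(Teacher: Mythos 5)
Your part (i) follows the paper's route: the paper also reduces to Theorem~\ref{th:piano} and the strong maximum principle. One point you should make explicit: a priori $(F_\infty)_2$ is only locally minimal \emph{in} $\bar\Pi$ (variations supported in $\bar\Pi$), whereas Theorem~\ref{th:piano} requires local minimality in all of $\RR^d$. The paper upgrades this using the convexity of $\Pi$ together with $\abs{(F_\infty)_3\setminus\Pi}=0$ (projecting competitors onto $\bar\Pi$ does not increase perimeter); without this remark the application of the maximum principle is not justified.

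Part (ii) is where the proposal has a genuine gap, and in fact the sliding argument cannot work as described. First, you are aiming at a stronger conclusion than the statement: the theorem only claims that $\partial\vec F_\infty\cap\bar\Pi$ is \emph{some} half-hyperplane with edge $\partial H\cap\partial\Pi$, not that it equals $\partial H\cap\bar\Pi$; since only variations supported in $\bar\Pi$ are admissible, a half-hyperplane tilted with respect to $\partial H$ (forming a ``bent hyperplane'' with a corner along $\partial H\cap\partial\Pi\subset\partial\Pi$) is not excluded, because smoothing the corner would require moving the surface outside $\bar\Pi$. Second, the sliding itself degenerates for cones: if $\Sigma\cap\bar\Pi$ were such a tilted half-hyperplane, the translates $\partial H+t\nu_H$ on one side intersect it for \emph{every} $t$ (so there is no first contact from that side), while on the other side the first contact occurs only at $t=0$ and only along the fixed part $\partial H\setminus\bar\Pi$ and the edge $\partial H\cap\partial\Pi$ --- precisely the boundary/corner locations where the interior strong maximum principle of \cite[Corollary 1]{Sim87} does not apply. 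You flag this difficulty yourself, but it is not a technicality to be patched: it is the whole content of the step. The paper's argument is different: it regards $T=\partial\vec F_\infty\llcorner\bar\Pi$ as a $(d-1)$-dimensional area-minimizing rectifiable current (minimizing for its own boundary in all of $\RR^d$, again by convexity of $\bar\Pi$), whose boundary $\partial T$ is the smooth $(d-2)$-plane $\partial H\cap\partial\Pi$; the Hardt--Simon boundary regularity theorem \cite{HarSim79} then gives $C^{1,\alpha}$ regularity of $T$ up to $\partial T$, hence a tangent half-hyperplane at $0\in\partial T$, and conicity forces $T$ to coincide with it. To complete your proof you would need to replace the sliding step by this (or an equivalent boundary regularity / boundary maximum principle) argument.
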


\begin{proof}\,
\begin{itemize}
    \item[(i)] Without loss of generality we can assume that $\abs{F_3\setminus \Pi}=0$. 
    Then $\abs{F_\infty^3 \setminus \Pi} = 0$ and $\abs{F_\infty^1}=0$. 
    Since $\vec F_\infty$ is isoperimetric in $\bar \Pi$ by Theorem \ref{th:teoclosure},
    $\abs{F_3\setminus \Pi}=0$ and $\Pi$ is convex, then $\partial \vec F_\infty$ is locally minimal in $\RR^d$,
    and by Theorem~\ref{th:piano} it is either empty or a hyperplane. In the latter case it necessarily coincides with $\partial\Pi$.
    \item[(ii)]
    By Corollary \ref{corinfty}, 
    $\vec F_\infty$ is an isoperimetric conical partition in $\bar\Pi$.
    Since $\abs{F_\infty^1}=0$ we have $\partial \vec F_\infty = \partial F_\infty^2$.
    Consider the $(d-1)$-dimensional rectifiable current $T$ representing the restriction of $\partial \vec F_\infty$
    in $\bar\Pi$ (i.e.\ the current representing the integration on the rectifiable set $\partial F_\infty^2\cap \bar\Pi$ with orientation 
    given by the normal vector $\nu_{F_\infty^2}$).
    Since $\vec F_\infty$ is locally isoperimetric in $\bar \Pi$, 
    and $\bar \Pi$ is convex,
    we know that $T$ is locally minimizing with respect to variations, preserving its boundary $\partial T$, locally on the whole space $\RR^d$.
%    Otherwise, 
%    $\partial T$ is the $(d-2)$-dimensional current representing the space $\partial \Pi \cap \partial H$.
    So, $\partial T$ has $C^{1,\alpha}$ regularity and we can apply the boundary regularity 
    result in \cite{HarSim79} (see also \cite{All75})
    which states that $T$ is itself $C^{1,\alpha}$ regular in some neighbourhood of $\partial T$.
    In particular if $\partial T\neq 0$ we know that $T$ has a tangent hyperplane in $0$ and, being a cone, 
    it must be supported on that hyperplane.
    \end{itemize}
\end{proof}

%The following result follows from standard density estimates for isoperimetric partitions (see for instance \cite[Theorem 2.4]{NovPaoTor25}).

%\begin{lemma}\label{lm:bounded}
%Let $\vec E$ be an isoperimetric $N$-partition in a half-space $\bar \Pi$ of $\RR^d$. 
%If $\abs{E_k\setminus \Pi}=0$, and $\abs{E_k}<+\infty$ for some $k=1,\dots,N$
%then $\abs{E_k \setminus B_R} = 0$ for some $R>0$.
%\end{lemma}

\begin{proposition}\label{prostima}
Let $\Pi$ be a half-space of $\RR^d$ with $0\in \partial \Pi$.
Suppose that $\vec F=(F_1,F_2,F_3)$ is a $3$-partition of $\RR^d$, isoperimetric in $\bar \Pi$, 
with $\abs{F_1}=\abs{F_1\cap \Pi} < +\infty$ and $\abs{F_2}=\abs{F_3}=+\infty$.
Let $\vec F_\infty$ be as in Corollary \ref{corinfty}, for a sequence $r_k\to +\infty$.
Suppose one of the following:
\begin{enumerate}
\item[(i)] $\abs{F_2\setminus \Pi}=0$ or $\abs{F_3\setminus \Pi}=0$;
\item[(ii)] or $\vec F \setminus \bar \Pi = (\emptyset, H\setminus\bar\Pi, (\RR^d\setminus H)\setminus \bar \Pi)$,
where $\partial H$ is a hyperplane with $0\in \partial H$. 
\end{enumerate}
Then we have
\begin{gather}\label{ehia}
\Abs{\vec m ((\vec F\triangle (\vec F_\infty +\vec y))\cap (B_r\setminus B_{r-1})\cap \Pi)} \to 0
 \text{ as $r\to +\infty$, for some $\vec y\in\RR^d$},\\\label{ehio}
 \Delta_{\vec F}^{\bar \Pi} \ge \Delta_{\vec L},
\end{gather}
where $\vec L$ is a lens partition.
\end{proposition}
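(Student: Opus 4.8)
The plan is to first identify the blow‑down $\vec F_\infty$, then establish the annular convergence \eqref{ehia}, and finally deduce the defect bound \eqref{ehio}; we may assume $d\ge4$, the cases $d\le3$ being elementary since there any such $\vec F$ is itself a lens partition.

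\emph{Identifying $\vec F_\infty$.} By Corollary~\ref{corinfty} every blow‑down $\vec F_\infty$ is a conical partition, locally minimal in $\bar\Pi$, so Theorem~\ref{teoflat} applies. One first rules out the degenerate alternative $\partial\vec F_\infty=\emptyset$ of Theorem~\ref{teoflat}(i): were it to hold, $\Theta_\infty^\Pi(\vec F)=\Per(\vec F_\infty,B_1\cap\bar\Pi)=0$, so by the monotonicity of $\rho\mapsto\Per(\vec F,B_\rho\setminus B_R)/\rho^{d-1}$ (Theorem~\ref{th:monotonicity-formula}, with $R$ chosen so that $F_1\subset B_R$) this function would be identically $0$, forcing $F_2$ or $F_3$ to be bounded, against the hypothesis $\abs{F_2}=\abs{F_3}=+\infty$. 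Hence, after relabelling $F_2\leftrightarrow F_3$ if necessary, $\vec F_\infty=(\emptyset,\RR^d\setminus\Pi,\Pi)$ in case (i), while in case (ii) the half‑hyperplane of Theorem~\ref{teoflat}(ii), having to be the $C^{1,\alpha}$‑continuation of the exterior datum $\partial H\setminus\bar\Pi$, gives $\vec F_\infty=(\emptyset,H,\RR^d\setminus H)$. Applying Theorem~\ref{teoallard} to $F_3$ (case (i)) resp.\ $F_2$ (case (ii)) shows this half‑space blow‑down is unique, so $\Delta_{\vec F}^{\bar\Pi}$ is well defined by Definition~\ref{def:defect}.

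\emph{Proof of \eqref{ehia}.} Outside $\bar B_R$ the interface of $\vec F$ is the interface between $F_2$ and $F_3$ alone, a stationary varifold asymptotic to the blow‑down hyperplane; by Theorem~\ref{teoallard} (applied to $F_3$, resp.\ $F_2$) it is the graph $t=f(\bar x)$ of a function with $f(\bar x)=a+b\,\abs{\bar x}^{-(d-3)}+O(\abs{\bar x}^{-(d-2)})$. Choosing $\vec y$ to be the translation sending the blow‑down hyperplane onto $\{t=a\}$, the left side of \eqref{ehia} is controlled by $\int_{\{r-2\le\abs{\bar x}\le r\}}\abs{f(\bar x)-a}\,d\bar x$, so it suffices to show that $f-a$ decays faster than $\abs{\bar x}^{-(d-2)}$ — this is the crux of the argument and the step I expect to be the main obstacle. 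The key point is that, for every Killing vector field $X$ of $\RR^d$, the flux $\Phi_X(\rho)=\int_{\partial\vec F\cap\partial B_\rho}\langle X,\eta_\rho\rangle$ is independent of $\rho>R$ (Killing fields are tangentially divergence‑free and the triple‑junction terms cancel by the $120^\circ$ conditions of the isoperimetric partition), and that the divergence theorem on $\partial\vec F\cap B_R$, combined with the constant‑mean‑curvature identity $\vec H=\lambda\,\nu_{F_1}$ on $\partial F_1$ imposed by the volume constraint, gives $\Phi_X(\rho)=\lambda\int_{F_1}\operatorname{div}X=0$ for all $\rho>R$, since translations and rotations are divergence‑free. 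Letting $\rho\to+\infty$ and comparing with the asymptotic expansion, the vanishing flux of the normal translation $\partial_t$ is a nonzero multiple of $b$, and the vanishing fluxes of the rotations $x_i\,\partial_t-t\,\partial_{x_i}$ are nonzero multiples of the first‑order coefficients of $f-a$; all of these therefore vanish. Iterating the expansion (as in \cite{MagNov22}) then yields $\abs{f(\bar x)-a}=O(\abs{\bar x}^{-(d-1)})$, so that $\int_{\{r-2\le\abs{\bar x}\le r\}}\abs{f-a}=O(r^{-1})\to0$, which proves \eqref{ehia}.

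\emph{Proof of \eqref{ehio}.} In both cases $\vec F=\vec F_\infty$ almost everywhere on $\RR^d\setminus\bar\Pi$, hence $\Per(\vec F,B_r\setminus\bar\Pi)=\Per(\vec F_\infty,B_r\setminus\bar\Pi)$ for every $r$, and therefore $\Per(\vec F,B_r)-\Per(\vec F_\infty,B_r)=\Per(\vec F,\bar\Pi\cap B_r)-\Per(\vec F_\infty,\bar\Pi\cap B_r)$. Regarding $\vec F$ as a $3$‑partition of $\RR^d$ with the unique half‑space blow‑down $\vec F_\infty$ found above, this identity reads $\Delta_{\vec F}=\Delta_{\vec F}^{\bar\Pi}$. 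Now the lens partition minimizes the defect among $3$‑partitions of $\RR^d$ with a single region of finite measure and a half‑space blow‑down — for $d\le7$ this is the isoperimetric characterization of lens partitions of \cite{NovPaoTor25,BroNov24}, and in general it follows from the classification, via the moving‑planes method, of the constant‑mean‑curvature configurations with flat ends. Consequently $\Delta_{\vec F}^{\bar\Pi}=\Delta_{\vec F}\ge\Delta_{\vec L}$, which is \eqref{ehio}.
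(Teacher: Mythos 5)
Your skeleton (identify the planar blow-down, prove annular convergence via the Allard--Maggi--Novack expansion, then compare with the lens) matches the paper's, but two steps do not go through as written. First, in case (ii) you apply Theorem~\ref{teoallard} directly to $F_2$. That theorem requires $\partial^*F_2\setminus\bar B_1$ to be stationary in all of $\RR^d\setminus\bar B_1$, whereas the isoperimetric property of $\vec F$ only gives minimality of $\partial F_2$ inside $\Pi$; outside $\bar\Pi$ the interface is the prescribed datum $\partial H$, and stationarity of the union across $\partial\Pi$ is exactly what has to be justified. The paper handles this by reflecting $F_2\cap\Pi$ through the codimension-two plane $\partial\Pi\cap\partial H$ to manufacture a set $E$ whose boundary is a genuine stationary exterior varifold, and only then invokes Theorem~\ref{teoallard}. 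Your flux computation inherits the same defect: a Killing field that does not vanish on $\partial\Pi$ does not generate an admissible variation (competitors must satisfy $\vec F'\triangle\vec F\subset\bar\Pi$), so the identity $\Phi_X(\rho)=\lambda\int_{F_1}\operatorname{div}X$ acquires an uncontrolled boundary term along $\partial\Pi$. (Your underlying concern is legitimate, though: the expansion \eqref{expansion} bounds the annular symmetric difference only by $O(r)$, so some further input is indeed needed to reach \eqref{ehia}; the paper deduces \eqref{ehia} from the expansion in one line, and the ``iteration'' you invoke to reach $O(\abs{x}^{1-d})$ is not contained in Theorem~\ref{teoallard} as stated.)

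Second, and more seriously, your proof of \eqref{ehio} reduces to the assertion that the lens minimizes the defect among all $3$-partitions of $\RR^d$ with one finite region and planar blow-down, justified either by the $d\le 7$ classification or by an unspecified ``moving-planes classification of CMC configurations with flat ends''. Neither applies: $\vec F$ is isoperimetric only in $\bar\Pi$, not in $\RR^d$, so the uniqueness theorem of \cite{BroNov24} (and any classification of isoperimetric partitions of $\RR^d$) is unavailable; and for $d=8$ --- the only dimension the paper needs --- no such classification is known, producing a non-lens example being the very point of the paper. The paper's argument avoids classification altogether: it Steiner-symmetrizes $F_1$ with respect to the hyperplane $\partial S$ bounding the half-space $S$ with $S\cap\Pi=F^2_\infty\cap\Pi$, shows $\Delta_{\vec F'}\le\Delta_{\vec F}^{\bar\Pi}$ as in \cite[Lemma 5.1]{BroNov24}, and then compares $\vec F'$ with the solution of the capillarity problem in $S$ with prescribed volume $\frac 1 2\abs{F_1}$, whose value yields exactly $\Delta_{\vec L}$. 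Without this (or an equivalent) comparison argument, \eqref{ehio} is not established.
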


\begin{proof} 
The proof is similar in the two cases, we only present the case (ii) which is slightly more difficult.
Choose a suitable coordinate system in $\RR^d$ such that $\Pi=\{x_d<0\}$ and $\partial \Pi\cap\partial H=\{x_d=x_{d-1}=0\}$.\\
Given a set $G\subset \Pi$ we let $\sigma(G):=\{x\in \RR^d:\, (x_1,\ldots , x_{d-2},-x_{d-1},-x_d)\not\in G\}\subset \RR^d\setminus\bar\Pi$.
%, by Lemma~\ref{lm:bounded}, 
Using standard density estimates, as in \cite[Theorem~2.4]{NovPaoTor25}, one can prove that there exists $R>0$ such $\abs{F_1\setminus B_R}=0$.
Consider now the set
\[
E\defeq (F_2\cap \Pi) \cup \sigma(F_2\cap \Pi),
\]
and observe that $E$ is locally minimal both in $\Pi\setminus \bar B_R$ (since $\vec F$ is isoperimetric and $F_1$ is contained in $B_R$) 
and in $\RR^d\setminus(\bar\Pi\cup \bar B_R)$.
%and on $\RR^d \setminus (\bar \Pi\cup \bar B_R)$ because $\vec F_\infty$ is locally minimal.
Moreover, on $\partial \Pi \setminus \bar B_R$ the set $\partial E$ is regular because 
$\partial \vec F$ is regular in a neighborhood of $\partial \Pi$. 
As a consequence $\vec v(\partial E\setminus \bar B_R,1)$ is a stationary varifold in $\RR^d\setminus \bar B_R$.
By Theorem \ref{teoallard} $\partial E$, hence also $\partial\vec F$, is asymptotically flat with the expansion in \eqref{expansion},
which shows in particular that $\partial \vec F_\infty\cap\Pi$ is a half-hyperplane and \eqref{ehia} holds. 
Consider the half-space $S$ such that $S \cap \Pi = \vec F_\infty^2 \cap \Pi$
and consider the Steiner symmetrical set $F'_1$ of $F_1$ with respect to the hyperplane $\partial S$. 
We consider the partition $\vec F' \defeq (F'_1,S\setminus F'_1,\RR^d\setminus (S\cup F'_1))$
and proceed as in \cite[Theorem 2.9]{BroNov24}:
following \cite[Lemma 5.1]{BroNov24}
one gets $\Delta_{\vec F'} \le \Delta_{\vec F}^{\bar \Pi}$,
then considering the capillarity problem in $S$ with prescribed volume $\frac 1 2 \abs{F_1}$, 
one obtains $\Delta_{\vec L} \le \Delta_{\vec F'}$.
This completes the proof.
\end{proof}

\begin{remark}
We expect that a partition $\vec F$ as in the proposition above is necessarily a lens partition, intersected with $\Pi$.
This would follow as in \cite[Theorem 2.9]{BroNov24} if we knew that $\partial\vec F_\infty$ is not contained in $\partial \Pi$.
On the other hand, there might exist a isoperimetric partition $\vec F$ in $\Pi$ with $\partial\vec F_\infty=\partial\Pi$, 
which is not a standard lens partition. 
\end{remark}

We are now ready to prove our main result.

\begin{theorem}[main result]\label{teomain}
There exists $\vec E=(E_1,E_2,E_3)$ an isoperimetric $3$-partition of $\RR^8$ with $\vec m(\vec E) = (1,+\infty,+\infty)$ 
which is not a lens partition (Definition~\ref{def:lens-partition}).
In particular its blow-down partition $\vec E_\infty=(\emptyset, E^1_\infty, E^2_\infty)$ is a singular minimal cone.
\end{theorem}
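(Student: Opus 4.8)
The plan is to obtain $\vec E$ as a concentration--compactness limit of isoperimetric $3$-partitions carrying the Simons' cone boundary data on spheres $\partial B_{R_n}$ with $R_n\to+\infty$, exploiting that the defect of the barrel partition is strictly below that of a lens (Lemma~\ref{lm:defect-partition}). First I would fix $R_n\to+\infty$ and let $\vec E_n$ minimize $\Per(\cdot,\bar B_{R_n})$ among $3$-partitions $\vec F$ of $\RR^8$ with $\vec F\setminus\bar B_{R_n}=\vec S\setminus\bar B_{R_n}$ and $|F_1|=1$; existence follows by the direct method (lower semicontinuity of the perimeter, $L^1$-compactness in $B_{R_n}$, and closedness of the constraints). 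Then $\vec E_n$ is $\{1\}$-isoperimetric in $\bar B_{R_n}$, its unique blow-down is $\vec S$, and $\Delta_{\vec E_n}=\Per(\vec E_n,\bar B_{R_n})-\Per(\vec S,\bar B_{R_n})$. Taking as competitor, for $R_n$ large, the barrel partition rescaled so that its bounded region has volume $1$ and glued to $\vec S$ outside a ball, one gets $\Delta_{\vec E_n}\le\Delta_{\vec Q}$, hence $\limsup_n\Delta_{\vec E_n}\le\Delta_{\vec Q}<\Delta_{\vec L}$ by Lemma~\ref{lm:defect-partition}. Standard density estimates for isoperimetric partitions furnish the uniform bounds $\sup_{x,n}\Per(\vec E_n,B_1(x))<+\infty$ and $\sup_n\Per((E_n)_1)<+\infty$ needed below.

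Next I would apply Theorem~\ref{teocon} with $J=\{1\}$: along a subsequence there are points $\vec x_n^i$ and $3$-partitions $\vec F^i$ ($i\in\NN$) with $|\vec x_n^i-\vec x_n^j|\to\infty$, $\vec E_n-\vec x_n^i\to\vec F^i$ in $L^1_\loc$, and $\sum_i|F^i_1|=\lim_n|(E_n)_1|=1$. Passing to a further subsequence one may assume $B_{R_n}-\vec x_n^i\to\Pi^i$ in $L^1_\loc$, with $\bar\Pi^i$ equal to $\RR^8$ or to a closed half-space (it cannot be empty, since $\vec x_n^i$ captures positive mass of $(E_n)_1\subset B_{R_n}$), and that $\vec S-\vec x_n^i$ converges, in $L^1_\loc$ and locally in Hausdorff distance, to a partition $\vec T^i$ which, the Simons' cone being smooth away from the origin, is either trivial or a half-space partition whose interface is a hyperplane through $0$. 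By Theorem~\ref{th:teoclosure} each $\vec F^i$ is $\{1\}$-isoperimetric in $\bar\Pi^i$ with $\vec F^i\setminus\bar\Pi^i=\vec T^i\setminus\bar\Pi^i$; in particular the boundary condition of Theorem~\ref{th:monotonicity-formula} holds and Corollary~\ref{corinfty} (or Theorem~\ref{th:blow-down} when $\bar\Pi^i=\RR^8$) provides a blow-down $\vec F^i_\infty$.

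The core is a dichotomy for each index $i$ with $|F^i_1|>0$: either (a) $\bar\Pi^i=\RR^8$ and $\vec F^i_\infty$ is a singular minimal cone, or (b) $\vec F^i$ satisfies the hypotheses of Lemma~\ref{lemmamain} and $\Delta^{\bar\Pi^i}_{\vec F^i}\ge\Delta_{\vec L}$. When $\bar\Pi^i$ is a half-space and $|F^i_2|=|F^i_3|=+\infty$, then according to whether $\vec T^i$ is trivial or a hyperplane partition, $\vec F^i$ falls under case (i) or (ii) of Proposition~\ref{prostima}, which yields $\Delta^{\bar\Pi^i}_{\vec F^i}\ge\Delta_{\vec L}$ together with the annulus decay~\eqref{ehia} required in Lemma~\ref{lemmamain}. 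When $\bar\Pi^i$ is a half-space but one of $|F^i_2|,|F^i_3|$ is finite, minimality forces $\vec F^i$ to reduce, up to a bounded region, to the relative isoperimetric problem for its finite part inside $\bar\Pi^i$; reflecting across $\partial\Pi^i$ and applying the isoperimetric inequality in $\RR^8$ gives $\Delta^{\bar\Pi^i}_{\vec F^i}\ge 4\cdot2^{7/8}\,\omega_8^{1/8}>\Delta_{\vec L}$, with trivial blow-down so that~\eqref{ehia} is immediate. When $\bar\Pi^i=\RR^8$, $\vec F^i$ is an isoperimetric $3$-partition of $\RR^8$ with two infinite regions, and either $\vec F^i_\infty$ is a non-flat locally minimal cone, hence singular at the vertex (case (a)), or $\vec F^i_\infty$ is a hyperplane, in which case the symmetrization/capillarity argument of \cite{BroNov24} (as in the proof of Proposition~\ref{prostima}, and dimension-independent once the blow-down is known to be flat) forces $\vec F^i$ to be a lens, so $\Delta_{\vec F^i}=\Delta_{\vec L}$ and~\eqref{ehia} holds trivially.

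To conclude: if case (a) never held, then $\Delta^{\bar\Pi^i}_{\vec F^i}\ge\Delta_{\vec L}$ for every $i$ with $|F^i_1|>0$ and all hypotheses of Lemma~\ref{lemmamain} would be in force, whence
\[
 \Delta_{\vec Q}\ \ge\ \limsup_n\Delta_{\vec E_n}\ \ge\ \inf_i\Delta^{\bar\Pi^i}_{\vec F^i}\ \ge\ \Delta_{\vec L},
\]
contradicting $\Delta_{\vec Q}<\Delta_{\vec L}$. Hence case (a) holds for some $i_0$, and rescaling $\vec F^{i_0}$ so that its finite region has volume $1$ produces a $\{1\}$-isoperimetric $3$-partition $\vec E$ of $\RR^8$ with $\vec m(\vec E)=(1,+\infty,+\infty)$ whose blow-down is a singular minimal cone; in particular $\vec E$ is not a lens partition, since a lens has a flat blow-down. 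The step I expect to be the main obstacle is the dichotomy, and within it the treatment of those limit partitions that ``see'' the receding sphere (those with $\bar\Pi^i$ a half-space): one must show they are no cheaper than a lens and that, inside $\Pi^i$, their blow-down is flat with the strong annulus decay~\eqref{ehia}. This is exactly what necessitated the half-space analysis of Section~\ref{secmono} (Theorems~\ref{th:piano} and~\ref{teoflat}, Proposition~\ref{prostima}), which itself relies on the Allard-type asymptotic expansion of Theorem~\ref{teoallard} and on boundary regularity; a further technical point is extracting from Theorem~\ref{th:teoclosure} the compatible limit data $(\Pi^i,\vec T^i,\vec F^i_\infty,\vec H^i)$ so that Lemma~\ref{lemmamain} applies verbatim.
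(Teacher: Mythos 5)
Your proposal follows essentially the same route as the paper: minimizers with Simons'-cone boundary data on $\bar B_{R_n}$, concentration compactness (Theorem~\ref{teocon}) and the closure theorem (Theorem~\ref{th:teoclosure}), the dichotomy on the limit partitions handled via Theorem~\ref{teoflat} and Proposition~\ref{prostima}, and the final contradiction $\Delta_{\vec Q}\ge\limsup_n\Delta_{\vec E_n}\ge\Delta_{\vec L}$ through Lemma~\ref{lemmamain} and Lemma~\ref{lm:defect-partition}. The only slip is the unjustified assertion that $\bar\Pi^i=\RR^8$ forces $\abs{F^i_2}=\abs{F^i_3}=+\infty$ (a region other than $F^i_1$ can still carry only finite mass in the limit), but that case is disposed of by the same isoperimetric-ratio bound $8\,\omega_8^{1/8}>\Delta_{\vec L}$ you already invoke in the half-space setting, exactly as in the paper.
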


\begin{proof}
Let $R_n\to +\infty$ be any sequence of positive numbers with $\abs{B_{R_n}} > 1$. 
Let $\vec E_n=((E_n)_1,(E_n)_2,(E_n)_3)$, be an isoperimetric partition in the ball $\bar B_{R_n}\subset \RR^d$ 
with $\abs{(E_n)_1}=1$, 
%$\abs{(E_n)_2\cap B_{R_n}} = \abs{(E_n)_3\cap B_{R_n}}>0$,
such that $\vec E \triangle \vec S \subset \bar B_{R_n}$ where 
$\vec S=(\emptyset, S_2,S_3)$ is the Simons' partition 
(see Definition~\ref{def:simons-partition}).

By Theorem~\ref{teocon}, up to a non-relabeled subsequence, there exist $\vec x^n_i\in \RR^8$ and $\vec F^i=(F^i_1,F^i_2,F^i_3)$ 
such that
\begin{gather}
    \lim_{n\to+\infty} \abs{\vec x_n^i - \vec x_n^j} = +\infty \qquad \text{if $i\neq j$,}\\
    \vec E_n - \vec x_n^i \stackrel{L^1_\loc}\longrightarrow \vec F^i, \qquad \text{as $n\to+\infty$, for all $i\in \NN$}\\
    \label{eq:92554}
    \sum_{i\in \NN} \abs{F_1^i} = \abs{(E_n)_1} = 1.
\end{gather}
Without loss of generality we can assume $\abs{F_1^i}>0$
for all $i$.

Up to a subsequence, 
we assume that $B_{R_n}(-\vec x_n^i) \to \Pi^i$ in $L^1_\loc$ as $n\to +\infty$,
where either $\Pi^i=\RR^d$ or $\bar\Pi^i$ is a half-space of $\RR^d$.
The limit $\Pi^i$
cannot be empty because $\bar \Pi^i\supset F_1^i$ 
and we are assuming that $\abs{F_1^i}>0$.
By Theorem~\ref{th:teoclosure} for each $i\in \NN$ the partition $\vec F^i$ is isoperimetric in $\bar \Pi^i \subset \RR^d$.

If for some $i\in \NN$ we have $\Pi^i=\RR^d$, 
$\abs{F^i_2}=\abs{F^i_3}=+\infty$, 
and the partition $\vec F^i$ is not isometric to a lens partition,
then the proof is concluded by taking as $\vec E$ an appropriate rescaling of $\vec F^i$.
Indeed, by~\cite{BroNov24}, an isoperimetric $3$-partition of $\RR^d$ with only one finite region is a lens partition if its blow-down is a plane, 
we know that our non-lens partition $\vec E$ has a blow-down $\vec E_\infty$ which is a singular minimal cone.

Hence, we assume by contradiction that for all $i\in \NN$ either $\vec F^i$ is a lens partition, or $\Pi^i \neq \RR^d$,
or $\abs{F^i_j}<+\infty$ for $j=2$ or $j=3$.
We claim that in any case we have
\begin{equation}\label{eq:38443}
\Delta_{\vec F^i}^{\bar \Pi^i}
\ge \Delta_{\vec L}.
\end{equation}

If $\vec F^i$ is a lens partition then~\eqref{eq:38443} trivially holds.

If $\Pi^i\neq \RR^d$ it means that the sequence $\vec x_n^i$ stays close to $\partial B_{R_n}$ and in the limit (as $R_n\to+\infty$) we 
see an affine half-space. 
If the sequence also stays close to the cone $\partial \vec S$, passing to the limit the cone becomes a hyperplane 
$T^i$ perpendicular to $\partial \Pi^i$ and $\partial \vec F^i$ coincides with such hyperplane outside $\bar \Pi^i$.
Without loss of generality we can assume that $0\in T^i\cap \partial \Pi^i$.
We can apply Theorem~\ref{teoflat}, and conclude that the blow-down 
$\vec F_\infty^i$ of $\vec F^i$ is a half-space also 
inside $\Pi^i$, so $\partial \vec F_\infty^i$ is the union of 
two half-hyperplanes containing $T^i\cap \partial \Pi^i$.
Then we can apply Proposition~\ref{prostima} (ii) to obtain~\eqref{eq:38443}.
Another case is when $\bar \Pi^i$ is a half-space, $\abs{F_2^i}=\abs{F_3^i}=+\infty$, and $\abs{F_2^i\setminus \Pi}=0$ or $\abs{F_3^i\setminus \Pi}=0$; then by 
Proposition~\ref{prostima} (i) we obtain again~\eqref{eq:38443}.

The last case to consider is when either $\abs{F^i_2}<+\infty$ or $\abs{F^i_3}<+\infty$. If for instance $\abs{F^i_3}<+\infty$, it follows
that $\abs{F^i_3\setminus\bar \Pi^i}=0$, and $(F^i_1,F^i_2\cup F^i_3)$ is an isoperimetric cluster.
In this case $F^i_1$ is a ball $B_\rho$ and we have 
\[
\Delta_{\vec F^i} =\Delta_{\vec F^i}^{\bar \Pi^i}= \frac{\Per(B_\rho)}{\abs{B_\rho}^{\frac 7 8}} 
= \frac{8 \sqrt \pi}{\sqrt[8]{24}}
\approx 9.53 > \Delta_{\vec L}.
\]
Hence \eqref{eq:38443} is proven.

Now we can apply Lemma~\ref{lemmamain}
to obtain
\[
 \limsup_n \Delta_{\vec E_n} \ge \Delta_{\vec L}.
\]
But when $R_n$ is sufficiently large the 
barrel partition $\vec Q$ is a competitor 
to $\vec E_n$ and hence we must have $\Delta_{\vec E_n} \le \Delta_{\vec Q}$
which would give $\Delta_{\vec L}\le \Delta_{\vec Q}$ in contrast to the statement of Lemma~\ref{lm:defect-partition}.
\end{proof}

\begin{remark}
As observed in the Introduction, an interesting open question is whether $\vec E_\infty$ coincides with the Simons' cone.
We expect that this is the case, but a proof of this fact would require a more refined analysis.
\end{remark}
%%%%%%%%%%%%%%%%%%%%%%%%%%%%%%%%%%%%%%%%%%%%%%%%%%%%%%%%%%%
\appendix

\section{Concentration compactness}\label{secapp}

In this section we collect some results about concentration compactness for sequences of sets with finite perimeter in the Euclidean space.
A generalization of these results were proved in \cite{NoPaStTo22} in a more abstract framework.
%We use the following basic propositions:

\begin{lemma}[compactness in $L^1_\loc$]
\label{lm:L1-loc-compactness}
Let $E_n\subset \RR^d$ be a sequence of Caccioppoli sets such that 
\begin{equation}
\label{eq:fjsk}
 \sup_{\vec x\in \RR^d} \sup_{n\in \NN} \Per(E_n,B_1(\vec x)) < +\infty.
\end{equation}
Then there exists a Caccioppoli set $E\subset \RR^d$ such that, up to a subsequence 
$E_n\to E$ in $L^1_\loc$.
\end{lemma}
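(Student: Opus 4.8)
The plan is to reduce this to the classical compactness theorem for $BV$ functions on balls, and then pass to a global limit by a diagonal argument over an exhausting sequence of balls.

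First I would upgrade the local bound \eqref{eq:fjsk} to a bound on every fixed ball. For each $R>0$ the ball $B_R$ can be covered by finitely many unit balls $B_1(\vec x_1),\dots,B_1(\vec x_{M_R})$ with $M_R=M_R(d,R)$, so that, denoting by $C$ the supremum appearing in \eqref{eq:fjsk},
\[
  \Per(E_n,B_R)\le \sum_{k=1}^{M_R}\Per(E_n,B_1(\vec x_k))\le M_R\, C \qquad \text{for all } n.
\]
Together with the trivial bound $\abs{E_n\cap B_R}\le \abs{B_R}$, this shows that the characteristic functions $1_{E_n}$ are bounded in $BV(B_R)$, uniformly in $n$, for every $R$.

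Next, fix $R$. Since $B_R$ has Lipschitz boundary, the compact embedding $BV(B_R)\hookrightarrow L^1(B_R)$ yields a subsequence along which $1_{E_n}\to u_R$ in $L^1(B_R)$, and, up to a further subsequence, almost everywhere on $B_R$; since each $1_{E_n}$ takes values in $\{0,1\}$, so does $u_R$, hence $u_R=1_{A_R}$ for some measurable $A_R\subset B_R$. Applying this successively to $R=1,2,3,\dots$ and taking a diagonal subsequence, I obtain a single non-relabeled subsequence of $\vec E_n$ such that $1_{E_n}\to 1_{A_R}$ in $L^1(B_R)$ for every $R\in\NN$. By uniqueness of $L^1$ limits the sets $A_R$ are consistent, i.e.\ $A_{R'}\cap B_R = A_R$ up to null sets whenever $R'>R$, so they glue to a single measurable set $E\subset\RR^d$ with $E\cap B_R=A_R$ for all $R$. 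Then $\abs{(E_n\triangle E)\cap B_r}\to 0$ for every $r>0$, that is $E_n\to E$ in $L^1_\loc$. Finally $E$ is a Caccioppoli set: by lower semicontinuity of the perimeter with respect to $L^1_\loc$ convergence, $\Per(E,B_r)\le \liminf_n \Per(E_n,B_r)<+\infty$ for every $r>0$.

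I do not expect any real obstacle in this argument; the only mildly delicate points are the passage to an almost everywhere convergent further subsequence, needed to ensure the $L^1$ limit is again a characteristic function, and the bookkeeping of the diagonal extraction so that the limit sets on the various balls are compatible and define one set $E$.
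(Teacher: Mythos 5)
Your proof is correct and follows essentially the same route as the paper: deduce a uniform perimeter bound on each ball from the unit-ball bound, apply $BV$ compactness on balls, and conclude by a diagonal extraction over an exhausting sequence of balls. You simply spell out some details the paper leaves implicit (the covering argument, the a.e.\ convergence ensuring the limit is a characteristic function, and the lower semicontinuity giving that $E$ is Caccioppoli).
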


\begin{proof}
Assumption~\eqref{eq:fjsk} implies that for all $r>0$
\[
  \sup_n \Per(E_n, B_r) < +\infty.
\]
So, there is a subsequence such that $E_n\cap B_1$ converges in $L^1$, up to a subsequence 
we can assume that also $E_n\cap B_2$ converges, and so on. With a diagonal argument we 
can assume that $E_n \cap B_k$ converges in $L^1$ as $n\to+\infty$ for all $k$, which means 
that the sequence $E_n$ converge in $L^1_\loc$.
\end{proof}

\begin{lemma}[equisummability]
  \label{lm:equisummability}%
Suppose that $m_{k,j}\geq 0$, and
\begin{gather*}
\lim_{n\to+\infty} \sup_{k\in \NN} \sum_{j=n}^{+\infty} m_{k,j} = 0.
\end{gather*}
Then
\[
\sum_{j=1}^{+\infty} \lim_{k\to +\infty} m_{k,j} = \lim_{k\to+\infty} \sum_{j=0}^{+\infty} m_{k,j}.
\]
\end{lemma}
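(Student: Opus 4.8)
The plan is to carry out a Moore--Osgood type interchange of limits, with the uniform smallness of the tails as the only substantive ingredient. Set $\ell_j \defeq \lim_{k\to+\infty} m_{k,j}$; for the statement to be meaningful these pointwise limits are assumed to exist, and they are automatically finite, since for $j$ large $m_{k,j}$ is dominated by a tail $\sum_{j'\ge n} m_{k,j'}$ which is bounded uniformly in $k$ by the hypothesis (and in the situations where the lemma is applied the $m_{k,j}$ are uniformly bounded in any case). The index discrepancy between $\sum_{j\ge 1}$ and $\sum_{j\ge 0}$ is immaterial: a $j=0$ term, if present, is absorbed into the finite head below.

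First I would fix $\eps>0$ and use the hypothesis to pick $n$ with $\sup_{k}\sum_{j=n}^{+\infty} m_{k,j} < \eps$. For each finite $n'\ge n$ the bound $\sum_{j=n}^{n'} m_{k,j} \le \eps$ holds for every $k$, so letting $k\to+\infty$ in this finite sum gives $\sum_{j=n}^{n'}\ell_j\le\eps$, and then $n'\to+\infty$ yields $\sum_{j=n}^{+\infty}\ell_j\le\eps$; in particular $\sum_j\ell_j<+\infty$, so the left-hand side of the claimed identity is a well-defined finite number. Then I would split both series into the head $j<n$ and the tail $j\ge n$: the head is a finite sum of convergent sequences, hence $\sum_{j<n} m_{k,j}\to\sum_{j<n}\ell_j$ as $k\to+\infty$, and combining with the two tail bounds,
\[
 \abs{\sum_{j} m_{k,j} - \sum_{j}\ell_j}
 \le \abs{\sum_{j<n} m_{k,j} - \sum_{j<n}\ell_j}
 + \sum_{j\ge n} m_{k,j} + \sum_{j\ge n}\ell_j
 < \abs{\sum_{j<n} m_{k,j} - \sum_{j<n}\ell_j} + 2\eps.
\]
Taking $\limsup_{k\to+\infty}$ kills the head term and leaves $\limsup_k \abs{\sum_j m_{k,j} - \sum_j\ell_j}\le 2\eps$; since $\eps>0$ was arbitrary, $\lim_k\sum_j m_{k,j}$ exists and equals $\sum_j\ell_j$, which is the assertion.

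I do not expect a genuine obstacle here: this is precisely the classical argument that a sequence of nonnegative series with uniformly controlled tails may be interchanged with a pointwise limit, and every estimate used is immediate from the hypothesis. The only step deserving a line of care is deducing $\sum_{j\ge n}\ell_j\le\eps$ from the uniform tail bound, which is handled by passing to the limit in finite partial sums and only afterwards letting the truncation tend to infinity.
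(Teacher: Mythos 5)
Your proof is correct. The paper states this lemma in the appendix without giving a proof (it is treated as a standard fact), and your argument is exactly the standard uniform-tail/Moore--Osgood interchange one would supply: extract $\sum_{j\ge n}\ell_j\le\eps$ by passing to the limit in finite partial sums first, then split into a finite head (where the limit commutes trivially) and two uniformly small tails. Your side remarks — that the existence of the pointwise limits is an implicit hypothesis, and that the $j=0$ versus $j=1$ discrepancy in the statement is an immaterial typo — are both accurate.
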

\begin{theorem}[concentration compactness]
\label{th:concentration}
Let $E_k$ be a sequence of measurable subsets of $\RR^d$ such that 
\begin{equation}
\label{eq:33948}
\sup_k \Per(E_k)=:P < +\infty,
\qquad
  \limsup_k \abs{E_k} =:m < +\infty.
\end{equation}
Then, there exist measurable sets $F^i$, $i\in \NN$ and points $\vec x_k^i \in \RR^d$ such that,
up to a subsequence in $k$, one has
\begin{gather}
    \label{eq:cc-infty}
    \lim_k \abs{\vec x_k^i - \vec x_k^j} = +\infty \qquad \text{for all $i\neq j$},\\
    \label{eq:cc-converge}
    E_k - \vec x_k^i \stackrel{L^1_\loc}\longrightarrow F^i \qquad \text{for all $i$ as $k\to+\infty$},\\
    \label{eq:cc-sum}
    \sum_{i\in \NN} \abs{F_i} = \lim_k \abs{E_k}.
\end{gather}
\end{theorem}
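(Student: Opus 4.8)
The plan is to run the classical concentration--compactness (``bubbling'') scheme of P.-L.\ Lions in the form adapted to sets of finite perimeter, using Lemma~\ref{lm:L1-loc-compactness} to extract limits along translations and a vanishing principle to ensure that no mass escapes. First pass to a subsequence so that $\lim_k\abs{E_k}=m$. We then construct the centers $\vec x_k^i$ and profiles $F^i$ by induction on $\ell$: having fixed $\vec x_k^1,\dots,\vec x_k^{\ell-1}$, set
\[
  \sigma_\ell\defeq \lim_{R\to\infty}\ \limsup_k\ \sup\ENCLOSE{\abs{E_k\cap B_1(\vec x)}\colon \abs{\vec x-\vec x_k^j}>2R\ \text{for all } j<\ell}.
\]
If $\sigma_\ell=0$ we stop and put $F^j\defeq\emptyset$ for $j\ge\ell$. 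Otherwise a diagonal argument yields a slowly diverging sequence $R_k\to\infty$ and points $\vec x_k^\ell$ with $\abs{\vec x_k^\ell-\vec x_k^j}>2R_k$ for $j<\ell$ (hence $\abs{\vec x_k^\ell-\vec x_k^j}\to\infty$, which is~\eqref{eq:cc-infty}) and $\abs{E_k\cap B_1(\vec x_k^\ell)}\ge\sigma_\ell/2$ for $k$ large. Since $\sup_{\vec x}\Per(E_k-\vec x_k^\ell,B_1(\vec x))\le\Per(E_k)\le P$ by~\eqref{eq:33948}, Lemma~\ref{lm:L1-loc-compactness} provides, along a further subsequence, a limit $F^\ell$ with $E_k-\vec x_k^\ell\to F^\ell$ in $L^1_\loc$ (this is~\eqref{eq:cc-converge}) and $\abs{F^\ell\cap B_1}\ge\sigma_\ell/2>0$. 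A final diagonalization over $\ell$ produces one subsequence valid for all $i$.

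The inequality ``$\le$'' in~\eqref{eq:cc-sum} is immediate: for fixed $M$ and $R$ the balls $B_R(\vec x_k^1),\dots,B_R(\vec x_k^M)$ are pairwise disjoint for $k$ large, and since the sets $E_k\cap B_R(\vec x_k^i)$ are disjoint we get, by $L^1$ convergence on $B_R$,
\[
  \sum_{i=1}^M\abs{F^i\cap B_R}=\lim_k\abs{\,E_k\cap\textstyle\bigcup_{i\le M}B_R(\vec x_k^i)}\le m ;
\]
letting $R\to\infty$ and then $M\to\infty$ gives $\sum_i\abs{F^i}\le m<+\infty$. Hence $\abs{F^\ell}\to0$; since $\sigma_\ell\le 2\abs{F^\ell\cap B_1}$ and $\sigma_{\ell+1}\le\sigma_\ell$ (the supremum being taken over a smaller set of $\vec x$), we get $\sigma_\ell\to0$. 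To prove ``$\ge$'', fix $\eps\in(0,1)$, choose $M$ with $\sigma_{M+1}<\eps$, then $R_0$ so large that the $\limsup_k\!\sup$ defining $\sigma_{M+1}$ is already below $\eps$ at radius $R_0$ and also $\abs{F^i}-\abs{F^i\cap B_{R_0}}<\eps/(2M)$ for every $i\le M$. By the coarea formula pick cutting radii $\rho_k^i\in[R_0+1,R_0+2]$ with $\Per(E_k,\partial B_{\rho_k^i}(\vec x_k^i))\le P$, and set $G_k\defeq E_k\setminus\bigcup_{i\le M}B_{\rho_k^i}(\vec x_k^i)$. A short case analysis on the position of a unit ball relative to the $M$ excised balls shows $\limsup_k\sup_{\vec x}\abs{G_k\cap B_1(\vec x)}\le C\eps$: away from all centers this bound is $\sigma_{M+1}(R_0)+o(1)$, while for $\vec x$ within bounded distance of some $\vec x_k^i$ the set $B_1(\vec x)\setminus B_{\rho_k^i}(\vec x_k^i)$ lies in an annulus about $\vec x_k^i$ whose $E_k$-mass tends to at most $\abs{F^i}-\abs{F^i\cap B_{R_0}}<\eps/(2M)$. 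Since $\sup_k\Per(G_k)\le(M+1)P<+\infty$, the Lions vanishing lemma --- which follows from the relative isoperimetric inequality $\abs{A\cap B_1}^{(d-1)/d}\le C\,\Per(A,B_1)$ (applicable because $\sup_{\vec x}\abs{G_k\cap B_1(\vec x)}$ is small for $k$ large), summed over a bounded-overlap cover of $\RR^d$ by unit balls after factoring out a power of $\sup_{\vec x}\abs{G_k\cap B_1(\vec x)}$ --- forces $\limsup_k\abs{G_k}\le C'\eps^{1/d}$. As $\abs{E_k}-\abs{G_k}=\sum_{i\le M}\abs{E_k\cap B_{\rho_k^i}(\vec x_k^i)}$ converges, up to an error $\eps/2$, to $\sum_{i\le M}\abs{F^i}\le\sum_i\abs{F^i}$, we conclude $m\le\sum_i\abs{F^i}+C''\eps$, and $\eps\to0$ yields~\eqref{eq:cc-sum}. (Equivalently, once the bubbles are assembled into a genuine $L^1$ profile decomposition of $E_k$, identity~\eqref{eq:cc-sum} becomes exactly the conclusion of the equisummability Lemma~\ref{lm:equisummability}.)

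The main obstacle is this last step, and inside it the ``seam'' problem: after removing finitely many balls the cut sphere $\partial B_{\rho_k^i}(\vec x_k^i)$ may still bound a full unit ball's worth of $E_k$ on its outer side, so the vanishing lemma cannot be applied to $G_k$ directly. Overcoming this is precisely what forces the $k$-dependent choice of cutting radii inside an annulus of asymptotically negligible mass, and the careful order in which $\eps$, $M$, $R_0$, the radii $\rho_k^i$, and finally $k$ are fixed, so that there is no circular dependence among the parameters.
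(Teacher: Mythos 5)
Your route is genuinely different from the paper's: you run the Lions bubbling scheme (inductive extraction of concentration points via the levels $\sigma_\ell$, plus a vanishing lemma for the remainder after excising finitely many large balls), whereas the paper tiles $\RR^d$ by unit lattice cubes ordered by mass, proves the uniform tail bound $\sum_{j\ge n}\abs{E_k\cap Q_k^j}\le \gamma\,\Per(E_k)\enclose{\abs{E_k}/n}^{1/d}$ via the relative isoperimetric inequality in a cube, applies the equisummability Lemma~\ref{lm:equisummability}, and then assembles the cubes into bubbles through an equivalence relation on the mutual distances of the cube centers. Your extraction of the points and profiles and your proof of the inequality ``$\le$'' in~\eqref{eq:cc-sum} are fine (modulo the small point that, if the extraction stops at some finite $\ell$, you must still produce points $\vec x_k^j$ for $j\ge\ell$ with mutually diverging distances and $E_k-\vec x_k^j\to\emptyset$ in $L^1_\loc$; this does follow from $\sigma_\ell=0$ by covering fixed balls by unit balls far from the chosen centers, but it should be said).

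The genuine gap is in the ``$\ge$'' step. First, the cutting radii are selected so that $\Per(E_k,\partial B_{\rho_k^i}(\vec x_k^i))\le P$ ``by coarea'': this is the wrong quantity. The measure $\Per(E_k,\cdot)$ charges no sphere for all but countably many radii, and it does not control the boundary \emph{created} by the excision, namely $\H^{d-1}(E_k^{(1)}\cap\partial B_{\rho_k^i}(\vec x_k^i))$, which can be of order $R_0^{d-1}$ independently of $P$; hence the asserted bound $\sup_k\Per(G_k)\le(M+1)P$ is unjustified. Second, and more seriously, even granting a bound of that form, the constant $C'$ in your conclusion $\limsup_k\abs{G_k}\le C'\eps^{1/d}$ is proportional to $\sup_k\Per(G_k)$, i.e.\ to $M+1$, and $M=M(\eps)$ is chosen so that $\sigma_{M+1}<\eps$. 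The only decay available is $\ell\,\sigma_\ell\le\sum_{j\le\ell}\sigma_j\le 2\sum_j\abs{F^j}\le 2m$, i.e.\ $\sigma_\ell\le 2m/\ell$, so $M(\eps)$ may be of order $1/\eps$ and $C'(\eps)\,\eps^{1/d}$ need not tend to zero: as written, letting $\eps\to0$ does not yield $m\le\sum_i\abs{F^i}$. Both defects are repaired by slicing the \emph{volume} instead: by coarea choose $\rho_k^i\in(R_0+1,R_0+2)$ with $\H^{d-1}(E_k^{(1)}\cap\partial B_{\rho_k^i}(\vec x_k^i))\le\abs{E_k\cap (B_{R_0+2}(\vec x_k^i)\setminus B_{R_0+1}(\vec x_k^i))}$; since these annuli are pairwise disjoint for $k$ large, the total perimeter added by all $M$ cuts is at most $\abs{E_k}\le m+1$ (in fact, by your choice of $R_0$ it is asymptotically at most $\eps/2$), so $\sup_k\Per(G_k)\le P+m+2$ \emph{independently of $M$ and $\eps$}, and the vanishing estimate gives $\limsup_k\abs{G_k}\le C(d)(P+m+2)\eps^{1/d}$, after which your argument closes as intended.
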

\begin{proof}
Since the statement is invariant under rescaling, without loss of generality we can 
assume that $m <\frac 12$ and hence that $|E_k|<\frac 12$ for every $k$.
For each $k$ let $\vec y_k^j$ be an enumeration of the lattice $\ZZ^d$ 
such that defining $Q_k^j = [0,1]^d + \vec y_k^j$ 
the sequence  
$j\mapsto \abs{E_k \cap Q_k^j}$ is decreasing.
If there are infinitely many cubes in which $E_k$ 
has positive measure we avoid to enumerate the cubes 
with $0$ measure.

Since $\sup_k P(E_k)<+\infty$, 
by Lemma~\ref{lm:L1-loc-compactness},
passing to a subsequence, for each $j$ we can suppose that 
$E_k - \vec y_k^j$ converges in $L^1_\loc$ and hence
there exists $G^j\subset [0,1]^d$ such that 
\[
  (E_k \cap Q_k^j) - y_k^j = (E_k - y_k^j)\cap [0,1]^d \stackrel{L^1}\longrightarrow G^j,
  \qquad \text{as $k\to \infty$}.
\]
Note that $\abs{G^j} = \lim_k \abs{E_k \cap Q_k^j}$.

Let $\gamma$ be the local isoperimetric constant, such that if $\abs{E\cap [0,1]^d} \le \frac 1 2$ 
then $\abs{E\cap[0,1]^d} \le \gamma \Per(E,[0,1]^d)^{1-\frac 1 d}$.
We have 
\begin{align*}
  \sum_{j=n}^{+\infty} \abs{E_k \cap Q_k^j}
  &= \sum_{j=n}^{+\infty} \abs{E_k \cap Q_k^j}^{\frac 1 d} 
  \cdot \abs{E_k \cap Q_k^j}^{\frac {d-1} d} 
  \le \gamma\sum_{j=n}^{+\infty} \abs{E_k \cap Q_k^j}^{\frac 1 d} 
  \Per(E_k, Q_k^j)\\
  &\le \gamma \abs{E_k \cap Q_k^n}^{\frac 1 d}\sum_{j = n}^{+\infty} \Per(E_k,Q_k^j)
  \le \gamma\enclose{\frac{\abs{E_k}} n}^{\frac 1 d} \sum_{j=n}^{+\infty}  \Per(E_k,Q_k^j)\\
  &\le \gamma\enclose{\frac{\abs{E_k}} n}^{\frac 1 d}  \Per(E_k)
  \le \frac{\gamma P}{2^{\frac 1 d}n^{\frac 1 d}}.
\end{align*}
Hence  
\[
\lim_{n\to+\infty} \sup_k \sum_{j=n}^{+\infty} \abs{E_k \cap Q_k^j} = 0.
\]
By Lemma~\ref{lm:equisummability} we have,
\[
    \sum_j \abs{G^j} 
    = \sum_j \lim_k \abs{E_k \cap Q_k^j}
    = \lim_k \sum_j \abs{E_k \cap Q_k^j}
    = \lim_k \abs{E_k}.
\]
Up to a subsequence we might suppose that, for 
all $j,j'\in \NN$, the limit $\lim_k \abs{\vec y_k^j-\vec y_k^{j'}}$
exists in $[0,+\infty]$.
So we can define an equivalence relation on $\NN$, by letting 
$j\sim j'$ whenever $\lim_k \abs{\vec y_k^j - \vec y_k^{j'}}<+\infty$
and let $I=\NN/{\sim}$ be the quotient set.
For each $i\in I$ choose a representant $j_i \in i$.

For every equivalence class $i$, and $j\in i$, the sequence 
$\vec y_k^j - \vec y_k^{j_i}$ is bounded in $\ZZ^d$, hence it is constant for $k$ large enough. 
Up to subsequences we may assume that for every equivalence class $i$, and $j\in i$, these sequences are constant: 
$\vec y_k^j - \vec y_k^{j_i} = \vec y^{j,i}\in \ZZ^d$. 
Let us denote $\vec y^{j,i}$ by $\vec y^j$ if $j\in i$.
Clearly for $j\in i$ all $\vec y^j$ are distinct
because $\vec y_k^j\in \ZZ^d$ are all distinct.
Let $\vec x_k^i = \vec y_k^{j_i}$, so that $\vec y^j=\vec y^j_k-\vec x^i_k$.

Let $F^i$ be the $L^1_\loc$ limit of $E_k - \vec x_k^i$. 
Since 
$(E_k - \vec y^j_k)\cap [0,1]^d \to G^j$, if $j\in i$
\begin{align*}
  \abs{G^j} 
    &= \lim_k \abs{E_k \cap Q_k^j}
    = \lim_k \abs{E_k \cap ([0,1]^d + \vec y_k^j)}\\
    &= \lim_k \abs{(E_k-\vec x_k^i)\cap([0,1]^d + (\vec y_k^j-\vec x_k^i))}\\
    &= \lim_k \abs{(E_k-\vec x_k^i)\cap([0,1]^d + \vec y^j)}
    = \abs{F^i\cap([0,1]^d + \vec y^j)}.
\end{align*}
So,
\begin{align*}
  \lim_k |E_k|
  & = \sum_j \abs{G^j} = \sum_i \sum_{j\in i} \abs{F^i\cap([0,1]^d + \vec y^j)}
  = \sum_i \abs{F^i\cap \bigsqcup_{j\in i}([0,1]^d + \vec y^j)}\\
  &\le \sum_i \abs{F^i}.
  \end{align*}
The other inequality follows from the semicontinuity of the measure 
with respect to the $L^1_\loc$ convergence.
\end{proof}

\bibliographystyle{amsplain}
%\bibliography{biblio}

\providecommand{\bysame}{\leavevmode\hbox to3em{\hrulefill}\thinspace}
\providecommand{\MR}{\relax\ifhmode\unskip\space\fi MR }
% \MRhref is called by the amsart/book/proc definition of \MR.
\providecommand{\MRhref}[2]{%
  \href{http://www.ams.org/mathscinet-getitem?mr=#1}{#2}
}
\providecommand{\href}[2]{#2}

\end{document}